\newtheorem{thm}{Theorem}[section]
\newtheorem{definition}[thm]{Definition}
\newtheorem{proposition}[thm]{Proposition}
\newtheorem{lem}[thm]{Lemma}
\newtheorem{pro}[thm]{Proposition}
\newtheorem{corollary}[thm]{Corollary}
\theoremstyle{remark}
\newtheorem{remark}{Remark}[section]
\newcommand{\ud}{\mathrm{d}}
\newcommand{\uu}{\mathfrak{u}}  
\newcommand{\ww}{\rho}  
\newcommand{\imp}{\theta}  
\newcommand{\aaa}{a} 
\newcommand{\BBB}{\bar{B}_\alpha} 
\newcommand{\half}{{\textstyle{1\over2}}}
\newcommand{\R}{\mathbb{R}}
\newcommand{\T}{\mathbb{T}}
\newcommand{\Pro}{\mathbb{P}}
\newcommand{\E}{\mathbb{E}}
\newcommand{\Z}{\mathbb{Z}}
\newcommand{\N}{\mathbb{N}}
\newlist{steps}{enumerate}{1}
\setlist[steps, 1]{label = Step \arabic*:}
\newcommand{\eqdef}{\stackrel{\text{\tiny{def}}}{=}}
\numberwithin{equation}{section}
\pgfplotsset{compat=1.14}
\newcommand{\eps}{\varepsilon}
\newcommand{\dual}[2]{\langle #1, #2\rangle}
\newcommand{\footremember}[2]{%
    \footnote{#2}
    \newcounter{#1}
    \setcounter{#1}{\value{footnote}}%
}
\DeclareMathOperator{\Lip}{Lip}
\begin{document}


\title{\bf A Smoluchowski--Kramers approximation for the stochastic variational wave equation}

\author{Billel Guelmame\footremember{alley}{UMPA, CNRS, ENS de Lyon, Universit\'e de Lyon, NYU Abu Dhabi, billel.guelmame@nyu.edu}
  and Julien Vovelle\footremember{trailer}{UMPA, CNRS, ENS de Lyon, julien.vovelle@ens-lyon.fr}
  }


\maketitle

\begin{abstract}
We investigate the Smoluchowski--Kramers approximation for the one-dimensional periodic variational wave equation with state-dependent damping and additive noise. We show that weak ``dissipative'' solutions converge to solutions of a stochastic quasilinear parabolic equation.

\end{abstract}

\medskip

 {\bf AMS Classification :}  35R60, 60H15, 35L70, 35A01
\medskip

{\bf Key words :} Smoluchowski--Kramers approximation, stochastic damped wave equations, singular perturbation of SPDEs.

\tableofcontents

\section{Introduction}

In this paper, we consider the damped variational wave equation with stochastic forcing, given by 
\begin{gather}\label{VWE}
\mu\, \ud u^\mu_t  -  c(u^\mu) \left( c(u^\mu)  u^\mu_x \right)_x \ud t  +  \gamma(u^\mu) u^\mu_t\, \ud t = f(u^\mu)\, \ud t +  \Phi\,  \ud W,  \qquad (t,x) \in  (0,T)\times\T,
\end{gather}
where $\mu>0$ is a positive parameter, $\T = \R/\Z$ is the one-dimensional torus, and $W$ is a cylindrical Wiener process over the filtered probability space $(\Omega, \mathcal{F}, \Pro, (\mathcal{F}_t)_{t\geqslant 0})$. The friction $\gamma$ is assumed to be strictly positive, and the nonlinearity $f$ is a Lipschitz-continuous function.
Our main goals are to prove the existence of global weak dissipative solutions to \eqref{VWE}, and to investigate the small-mass limit as $\mu \to 0$.

The classical (and deterministic) variational wave equation, corresponding to the case $\mu=1$, $f \equiv 0$, $\gamma \equiv 0$, and $\Phi \equiv 0$, reads 
\begin{equation}\label{CWE} 
u_{tt}-c(u)(c(u)u_x)_x=0. 
\end{equation} 
Equation \eqref{CWE} arises in various physical settings, such as the modeling of nematic liquid crystals \cite{Saxton89,HunterSaxton1991,GlasseyHunterZheng1997}, long waves in dipole chains \cite{GlasseyHunterZheng1997,GI92,ZI92}, and also in classical field theories and general relativity \cite{GlasseyHunterZheng1997}.

This equation has been widely studied on the real line $\R$. 
Local-in-time well-posedness follows from standard arguments, while the formation of singularities (finite-time blow-up of smooth solutions) has been proved in \cite{GlasseyHunterZheng1996}. The existence of global rarefactive solutions was proved in \cite{ZhangZheng2001}.
Two distinct types of global weak solutions, conservative and dissipative, are known for \eqref{CWE}. Conservative solutions were constructed in \cite{BressanZheng2006} by reformulating the problem in Lagrangian coordinates, where the uniqueness was established later in \cite{BressanChenZhang2015}.
Dissipative solutions were first obtained via an approximated system and compactness methods in \cite{ZhangZheng2003,ZhangZheng2005}, and later via Lagrangian coordinates in \cite{BressanHuang2016} following the spirit of \cite{BressanZheng2006}. To the best of our knowledge, the uniqueness of dissipative solutions remains an open problem.
In the stochastic setting on the torus $\T$, the authors of the present paper studied \eqref{CWE} with additive noise (i.e., equation \eqref{VWE} with $f \equiv \gamma \equiv 0$) in \cite{GV25}. There, we established local well-posedness, constructed initial data such that the corresponding strong solutions blow-up in finite time, and proved the existence of global weak dissipative solutions.
We present in this paper the missing pieces to establish the existence of global weak solutions in the case $\gamma \geqslant \gamma_1>0$ and $f$ is Lipschitz continuous. 
Another stochastic variant of \eqref{CWE} involving transport noise and viscosity was recently studied in \cite{Pang24}, where global well-posedness was proved.

After establishing the existence of global solutions to \eqref{VWE}, we focus on the small-mass limit $\mu \to 0$, also known as the Smoluchowski--Kramers approximation. The validity of this limit has been investigated in various settings. We refer to \cite{Freidlin04,Spiliopoulos07,CerraiFreidlin11,CerraiWehrZhu20,HerzogHottovyVolpe16,HottovyMcDanielVolpeWehr15,Lee14} for the finite dimensional case (ODEs), and to \cite{BrzezniakCerrai23,CerraiDebussche23,CerraiFreidlin06a,CerraiFreidlin06b,CerraiFreidlinSalins17,CerraiSalins17,CerraiXi22,CerraiXie23,CerraiXie24,LvRoberts12,LvRoberts14,LvRobertsWang14,Nguyen18,Salins19} for the infinite dimensional case (PDEs).

In the context of the stochastic damped wave equation, Cerrai and Freidlin \cite{CerraiFreidlin06a} studied the small-mass limit with constant friction $\gamma$, constant wave speed $c$, and additive noise, for any spatial dimension $d \geqslant 1$. 
In another work \cite{CerraiFreidlin06b}, they addressed multiplicative noise, but only for $d=1$. 
The extension to multiplicative noise in higher dimensions was later carried out by Salins in \cite{Salins19}.
 State-dependent friction $\gamma = \gamma(u)$, in combination with multiplicative noise and arbitrary spatial dimensions, was studied in \cite{CerraiXi22}. More recently, Cerrai and Debussche \cite{CerraiDebussche23} considered a system of stochastic wave equations with non-constant friction. 

In this paper, we consider equation \eqref{VWE} with both state-dependent friction and wave speed, focusing on the one-dimensional case ($d = 1$) with additive noise. This restriction is due to the complexity of the variational wave equation: even in the deterministic setting, global existence results are only known for $d = 1$.
Replacing the damping term $\gamma(u^\mu) u^\mu_t\, \ud t$ in \eqref{VWE} with $\ud \Gamma(u^\mu)$, where $\Gamma'(u) = \gamma (u)$, we prove that as $\mu \to 0$, the solutions of \eqref{VWE} converge in probability to a solution of the stochastic quasilinear parabolic equation
\begin{equation}\label{limeq version rho u}
\ud \Gamma(u) -  c(u) \left( c(u)  u_x \right)_x \ud t  = f(u)\, \ud t +  \Phi\,  \ud W.
\end{equation}
This equation can also be rewritten in terms of $\ww = \Gamma(u) = \int_0^u \gamma(v)\, \ud v$ as 
\begin{equation}\label{limeq version rho rho}
		\ud\ww -  \alpha(\ww) \left( \beta(\ww) \ww_x \right)_x \ud t  = f\circ\Gamma^{-1}(\ww)\, \ud t +  \Phi\,  \ud W, \qquad 		\ww\eqdef\Gamma(u)\eqdef\int_0^u\gamma(v)\, \ud v,
	\end{equation}
	where $\alpha=c\circ\Gamma^{-1}=\beta \gamma\circ\Gamma^{-1}$.  One can also write \eqref{limeq version rho u}
 in terms of the displacement $u$ as 	
	\begin{equation}\label{limeq version u}
\ud u -  \tfrac{ c(u)}{\gamma(u)} \left( c(u)  u_x \right)_x \ud t  = \tfrac{f(u)}{\gamma(u)}\, \ud t -\tfrac{\gamma'(u)}{2\gamma(u)^3} q\, \ud t + \tfrac{1}{\gamma(u)} \Phi\,  \ud W,
	\end{equation}
with $q(x) =\sum_{k \geqslant 1} \sigma_k(x)^2 $ and $\Phi(x) W(t)= \sum_{k\geqslant 1} \sigma_k(x) \beta_k(t) $,  where $(\beta_1(t),\beta_2(t),\dotsc)$ are independent one-dimensional Wiener processes (see Section \ref{sec:stochastic} below).
As noted in \cite{CerraiXi22}, equation \eqref{limeq version u} contains an additional drift term that does not appear in \eqref{VWE} with $\mu = 0$. This term arises from the interaction between the non-constant friction and the stochastic noise. It is in fact It\^o's  correction term. 	
At last, to establish the uniqueness of solutions to \eqref{limeq version rho rho} and \eqref{limeq version u}, we write the quasi-linear parabolic equation in divergence form
		\begin{equation}\label{limeq version rhoc}
		\ud\imp  -  \left(b(\imp)\imp_x \right)_x \ud t  = F(x,\imp)\, \ud t +  \Psi(\imp)\,  \ud W,
	\end{equation}
	where 
\begin{equation*}
			\imp\eqdef\underline{\Gamma}(u)\eqdef\int_0^u\tfrac{\gamma(v)}{c(v)} \, \ud v,
\qquad 
	b=\tfrac{c^2}{\gamma}\circ\underline{\Gamma}^{-1},\qquad 
		F(x,\cdot)=\left(\tfrac{f}{c}- q(x)\tfrac{c'}{2\gamma c^2}\right)\circ\underline{\Gamma}^{-1},\qquad 
		\Psi=\tfrac{\Phi}{c\circ\underline{\Gamma}^{-1}}.
\end{equation*}
Equation \eqref{limeq version rhoc} was previously studied in \cite{HofmanovaZhang17} in the case $F=F(\imp)$, where existence and uniqueness of weak solutions were established. The case $F=F(x,\imp)$ on bounded domains with Dirichlet boundary conditions was studied in \cite{CerraiXi22}. The uniqueness result in our case for \eqref{limeq version rhoc} follows by using similar arguments from both works.
Finally, using the uniqueness of the limiting equation together with the Gy\"ongy--Krylov argument, \cite[Lemma 1.1]{GyongyKrylov96}, we conclude the convergence of $u^\mu$ to $u$ in probability.

The limit from \eqref{VWE} to \eqref{limeq version rho u} (or \eqref{limeq version rho rho}) is however not straightforward. This difficulty arises from the dependence of the wave speed 
$c$ on the state $u$, which leads to the appearance of a non-negative defect measure $\hat{a}$ in \eqref{limeq version rho u}	 (see \eqref{eq ei with defect measure} below). 
Using the energy ``dissipation'' of \eqref{VWE},  together with It\^o's formula, we show that the defect measure is equal to zero (see Proposition \ref{prop:Identification} below).

The paper is organized as follows. In Section \ref{sec:mainresults}, we introduce the stochastic settings, we define an approximated system that is used to obtain global existence of weak solutions to \eqref{VWE}. We also state the main results of the paper. 
Section \ref{sec:energy} is devoted to obtaining some energy estimates, while in Section \ref{sec:ue}, we obtain estimates that are uniform in $\mu$.
In Section \ref{sec:limit}, we prove compactness results and we establish the validity of the Smoluchowski--Kramers approximation. In Appendix \ref{app:equiv fomrulations}, we show the equivalence between \eqref{VWE} and the system defined in Section \ref{sec:stochastic}. Finally, in Appendix \ref{app:Ito}, we prove two It\^o formulas that are used throughout the paper.

\section{The equations and main results}\label{sec:mainresults}

\subsection{The stochastic variational wave equation}\label{sec:stochastic}
Consider the filtered probability space
\begin{equation*}
(\Omega, \mathcal{F}, \Pro, (\mathcal{F}_t)_{t\geqslant 0}).
\end{equation*}
Let $\mathfrak{U}$ be a Hilbert space with an orthonormal basis $(g_k)_{k \geqslant 1}$, and let $\mathfrak{U}_{-1}$ be another Hilbert space such that the injection $\mathfrak{U}\hookrightarrow\mathfrak{U}_{-1}$ is Hilbert--Schmidt and $\mathfrak{U}$ is dense in $\mathfrak{U}_{-1}$. Typically, we will consider the set of linear functions $\varphi\colon\mathfrak{U}\to\R$ satisfying
	\begin{equation*}
		\|\varphi\|_{\mathfrak{U}_{-1}}^2\eqdef \sum_{k\geqslant 1}\tfrac{1}{k^2}|\varphi(g_k)|^2 <\infty.
	\end{equation*}
The injection $i\colon\mathfrak{U}\hookrightarrow\mathfrak{U}_{-1}$ is then provided by the identification of $\mathfrak{U}$ with its topological dual, by $i(g)(h)=\dual{h}{g}_\mathfrak{U}$. Let $W$ be the cylindrical Wiener process defined by
\begin{equation}\label{def cylindrical Wiener}
W(t)  \eqdef  \sum_{k\geqslant 1} g_k  \beta_k(t), \quad  t\geqslant 0,
\end{equation}
where $(\beta_1(t),\beta_2(t),\dotsc)$ are independent one-dimensional Wiener processes. We can consider, equivalently, $W(t)$ as a linear functional given by 
\begin{equation*}
	W(t)(h)  = \sum_{k\geqslant 1} \dual{h}{g_k}_{\mathfrak{U}}  \beta_k(t), 
\end{equation*}
 or $W(t)$ as an element of $\mathfrak{U}_{-1}$, the sum in \eqref{def cylindrical Wiener} being convergent in this larger space,  see Section~4.1.2 in \cite{DaPratoZabczyk14}.
Let $\Phi\colon\mathfrak{U} \to L^2(\T)$ such that for any $k\geqslant 1$ we have $\sigma_k \eqdef \Phi g_k \in C(\T)$ and 
\begin{equation}\label{defq}
q_0  \eqdef  \sum_k \|\sigma_k\|_{W^{1,\infty}(\T)}^2  <  \infty, \qquad q(x)  \eqdef  \sum_k \sigma_k(x)^2.
\end{equation}
By \eqref{defq} and the injection $L^\infty(\T)\hookrightarrow L^2(\T)$, the map $\Phi$ is Hilbert--Schmidt. Let us assume that  the non-linear speed of sound $c$, the friction coefficient $\gamma$, and the source term $f$ are smooth functions satisfying
\begin{gather}
\label{coeff-c} 0  <  c_1  \leqslant  c(u)  \leqslant  c_2, \qquad  0  \leqslant  c'(u)  \leqslant  c_3, \\ 
\label{coeff-gammaf} 0  <  \gamma_1  \leqslant  \gamma(u)  \leqslant  \gamma_2, \qquad \mathrm{Lip}(f)  \leqslant L,
\end{gather}
for some constants $c_1,c_2,c_3,\gamma_1,\gamma_2, L \in(0,\infty)$.
We assume further that for some $\bar{u} \in \R$, we have
\begin{equation}\label{c'g}
\kappa \eqdef 	\liminf_{u \to - \infty} \left( c'(u) \int_{\bar{u}}^u \frac{\ud v}{c(v)} \right) > -1,
\end{equation}
and
\begin{equation}\label{cgamma}
	\sup_{u \in \R} \left( |c''(u)| + |\gamma'(u)|\right) < \infty.
\end{equation}
The condition~\eqref{c'g} is in particular satisfied if $uc'(u)=o(1)$ when $u\to-\infty$. 
Let $\mu >0$, we consider the stochastic variational wave equation with friction and additive noise in $(0,T)\times\T$ 
\begin{subequations}\label{SVWE1} 
\begin{gather}
\mu\, \ud u^\mu_t  -  c(u^\mu) \left( c(u^\mu)  u^\mu_x \right)_x \ud t  +  \gamma(u^\mu) u^\mu_t\, \ud t = f(u^\mu)\, \ud t +  \Phi\,  \ud W,  \\ 
u^\mu(0,\cdot)  =  u_0, \qquad u^\mu_t(t=0,\cdot)  =  v_0, 
\end{gather}
\end{subequations}
where
\begin{equation}\label{Positivecprime00}
\inf_{x \in \T} c'(u_0(x))  >  0.
\end{equation}
The equation~\eqref{SVWE1} admits an equivalent formulation, given by the system 
\begin{subequations}\label{SVWE2}
\begin{gather}\label{Req}
\sqrt{\mu}\, \ud R^\mu  +  c(u^\mu)  R^\mu_x\,  \ud t   +  \gamma(u^\mu) \tfrac{R^\mu+S^\mu}{2\sqrt{\mu}}\, \ud t =  \tilde{c}'(u^\mu) \left[(R^\mu)^2  -  (S^\mu)^2 \right] \ud t  +  f(u^\mu)\, \ud t + \Phi \,  \ud W, \\  \label{Seq}
\sqrt{\mu}\, \ud S^\mu  -  c(u^\mu)  S^\mu_x\,  \ud t   +  \gamma(u^\mu) \tfrac{R^\mu+S^\mu}{2\sqrt{\mu}}\, \ud t =  \tilde{c}'(u^\mu) \left[(S^\mu)^2  -  (R^\mu)^2 \right] \ud t  + f(u^\mu)\, \ud t +  \Phi \,  \ud W, 
\end{gather}
\end{subequations}
completed with the equation
\begin{equation}\label{udef}
u^\mu(t,x)  =  \mathcal{C}^{-1}\left\{ \mathcal{C} \left\{ \int_0^t {\textstyle \left(\frac{R^\mu  +  S^\mu}{2 \sqrt{\mu}}  \right) (s,0)} \, \ud s  +  u_0(0) \right\} +  \int_0^x {\textstyle \frac{S^\mu  -  R^\mu}{2}}(t,y) \, \ud y \right\},
\end{equation}
where
\begin{equation*}
\mathcal{C}(r)  \eqdef  \int_0^r c(\sigma)\,  \ud \sigma, \qquad \tilde{c}(u)  \eqdef  {\textstyle \frac{1}{4}}  \log c(u),
\end{equation*}
which expresses $u$ as a non-local function of $(R^\mu,S^\mu)$. The system \eqref{SVWE2} is deduced from \eqref{SVWE1} by setting
\begin{equation}\label{defRS}
R^\mu  \eqdef \sqrt{\mu} u^\mu_t  -  c(u^\mu)  u^\mu_x, \qquad S^\mu  \eqdef \sqrt{\mu}  u^\mu_t  +  c(u^\mu)  u^\mu_x.
\end{equation}
The equivalence of \eqref{SVWE1} and \eqref{SVWE2}-\eqref{udef} is discussed with more details in Appendix~\ref{app:equiv fomrulations}. Note that the corresponding initial conditions for \eqref{SVWE2} are
\begin{equation}\label{IC}
R^\mu(0,\cdot)  =  R_0^\mu  \eqdef \sqrt{\mu} v_0  -  c(u_0)  u_0', \quad S^\mu(0,\cdot)  =  S_0^\mu  \eqdef  \sqrt{\mu} v_0  +  c(u_0)  u_0'.
\end{equation}
Finally, we define the energy 
\begin{equation*}
\mathcal{E}^\mu \eqdef \int_\T \left((R^\mu)^2+(S^\mu)^2 \right) \ud x = 2 \int_\T \left(\mu (u_t^\mu)^2+c(u^\mu)^2(u_x^\mu)^2 \right) \ud x.
\end{equation*}

\begin{remark}[Notations]\label{rk:notations-intro} The subscript $t$, as in $u_t$, always denotes the partial derivative with respect to $t$, and never the value at the given time $t$ of a stochastic process $X$ (the latter being simply denoted by $X(t)$).
\end{remark}

\subsection{An approximated system}

The system \eqref{SVWE2} is locally (in time) well-posed, in the class of classical solutions. However, singularities may appear in finite time with a high probability \cite{GV25}. In order to obtain global weak solutions, we define the cut-off function  
\begin{equation*}
\chi_\varepsilon (\xi)  \eqdef  \left(\xi  -  \tfrac{1}{\varepsilon} \right)^2 \mathds{1}_{[\frac{1}{\varepsilon}, \infty)} (\xi)  =  
\begin{cases}
\left(\xi  -  \tfrac{1}{\varepsilon} \right)^2, & \xi \geqslant 1/\varepsilon, \\ 
0, & \xi < 1/\varepsilon,
\end{cases}
\end{equation*}
for any $\varepsilon>0$.  Then, the system \eqref{SVWE2} can be approximated by
\begin{subequations}\label{SVWEep}
\begin{gather}\nonumber
\sqrt{\mu}\, \ud R^{\mu, \varepsilon}  +  c(u^{\mu, \varepsilon})  R^{\mu, \varepsilon}_x  \ud t +  \gamma(u^{\mu, \varepsilon}) \tfrac{R^{\mu, \varepsilon}+ S^{\mu, \varepsilon}}{2 \sqrt{\mu}}\, \ud t \\ \label{Reqep}
 =  \tilde{c}'(u^{\mu, \varepsilon}) \left[(R^{\mu, \varepsilon})^2  -  (S^{\mu, \varepsilon})^2  -  \chi_\varepsilon(R^{\mu, \varepsilon})  +  2  R^{\mu, \varepsilon}  \Theta^{\mu, \varepsilon} \right] \ud t + f(\uu^{\mu, \varepsilon})\, \ud t +  \Phi^{\varepsilon}   \ud W, \\  \nonumber
\sqrt{\mu}\, \ud S^{\mu, \varepsilon}  -  c(u^{\mu, \varepsilon})  S^{\mu, \varepsilon}_x  \ud t +  \gamma(u^{\mu, \varepsilon}) \tfrac{R^{\mu, \varepsilon}+ S^{\mu, \varepsilon}}{2 \sqrt{\mu}}\, \ud t \\  \label{Seqep} 
=  \tilde{c}'(u^{\mu, \varepsilon}) \left[ (S^{\mu, \varepsilon})^2  -  (R^{\mu, \varepsilon})^2  -  \chi_\varepsilon(S^{\mu, \varepsilon})  -  2  S^{\mu, \varepsilon}  \Theta^{\mu, \varepsilon} \right] \ud t + f(\uu^{\mu, \varepsilon})\, \ud t +  \Phi^{\varepsilon}   \ud W,\\ 
R^{\mu, \varepsilon}(0,\cdot)  =  R^{\mu, \varepsilon}_0  \eqdef   J_\varepsilon R_0^\mu, \quad \qquad S^{\mu, \varepsilon}(0,\cdot)  =  S^{ \mu,\varepsilon}_0  \eqdef    J_\varepsilon S_0^\mu,
\end{gather}
\end{subequations}
coupled with the equations ($x\in [0,1)$)
\begin{gather}\label{udefep}
u^{\mu, \varepsilon}(t,x)  =  \mathcal{C}^{-1}\left\{ \mathcal{C} \left\{ \int_0^t {\textstyle \left(\frac{R^{\mu, \varepsilon}  +  S^{\mu, \varepsilon}}{2 \sqrt{\mu}}  \right) (s,0)} \, \ud s  +  u^{ \varepsilon}_0(0) \right\} +  \int_0^x \left[{\textstyle \frac{S^{\mu, \varepsilon}  -  R^{\mu, \varepsilon}}{2}}(t,y)  -  \Theta^{\mu, \varepsilon}(t)\right] \ud y\right\},\\ \label{wdefep}
\uu^{\mu, \varepsilon}(t,x)  \eqdef   \int_0^t {\textstyle \left(\frac{R^{\mu, \varepsilon}  +  S^{\mu, \varepsilon}}{2 \sqrt{\mu}}  \right) (s,x)} \, \ud s  +  u^{\varepsilon}_0(x).
\end{gather}
In \eqref{SVWEep}-\eqref{udefep}, we have introduced the ``correction term'' (the necessity of this correction term is manifest if we reproduce the analysis given in Appendix~\ref{app:equiv fomrulations}, see in particular the condition~\eqref{NSC int ux})
	\begin{equation}\label{psieps}
		\Theta^{\mu, \varepsilon}(t)  \eqdef   \int_0^1\tfrac{S^{\mu, \varepsilon}-R^{\mu, \varepsilon}}{2}(t,y) \, \ud y.
	\end{equation}
The modification of $u^{\mu, \varepsilon}$ into $\uu^{\mu, \varepsilon}$ at several instances in \eqref{SVWEep} can also be seen as a correction of the value of $u^{\mu, \varepsilon}$ by a quantity which satisfies the identity (compare to \eqref{ut by RS})
\begin{equation}\label{ut by RSep}
	\uu^{\mu, \varepsilon}_t=\tfrac{S^{\mu, \varepsilon}+R^{\mu, \varepsilon}}{2\sqrt{\mu}}.
\end{equation} 
In \eqref{SVWEep}-\eqref{udefep}, we have also used the Friedrichs mollifier $J_\varepsilon$, defined as the convolution operator $R\mapsto R\ast\varrho_\varepsilon$, where $(\varrho_\eps)$ is an approximation of the unit.
In \eqref{udefep} and \eqref{wdefep}, $u_0^\varepsilon$ is defined as $u_0^\varepsilon \eqdef \mathcal{C}^{-1}(J_\varepsilon \mathcal{C}(u_0))$.
In \eqref{SVWEep},  $\Phi^{ \varepsilon} : \mathfrak{U} \to \cap_{s \geqslant 0} H^s(\T)$ is defined as $ \Phi^{ \varepsilon} g_k \eqdef \sigma_k^{\varepsilon} \eqdef J_\varepsilon\sigma_k \in C^\infty(\T)$ for any $k\geqslant 1$ and $\varepsilon>0$.
Clearly, we have the domination $|\sigma_k^{\varepsilon}| \leqslant |\sigma_k|$, and thus (see \eqref{defq})
\begin{equation*}
\sum_k \|\sigma_k^\varepsilon\|_{C(\T)}^2  \leqslant  \sum_k \|\sigma_k\|_{C(\T)}^2  \leqslant  q_0, \qquad q^\varepsilon(x)  \eqdef  \sum_k \sigma_k^\varepsilon(x)^2.
\end{equation*}

\subsection{Global solutions}

\begin{thm}[Global existence of regular solutions]\label{thm:glo-existRSep} Let $c,f$ and $\gamma$ satisfying \eqref{coeff-c} and \eqref{coeff-gammaf}. Let $\Phi$ satisfy \eqref{defq}. Then \eqref{SVWEep} admits a unique global smooth solution $(R^{\mu, \varepsilon}, S^{\mu, \varepsilon})$, in the following sense: 
\begin{enumerate}
	\item for all $s> 3/2$, for all $p\in[1,\infty)$, for all $T>0$,
	\begin{equation*}
		R^{\mu, \varepsilon}, S^{\mu, \varepsilon}\in L^p_\mathcal{P}(\Omega; C([0,T]; H^{s}(\T))\cap C^1([0,T]; H^{s-1}(\T))),
	\end{equation*}
	where $\mathcal{P}$ denotes the predictable $\sigma$-algebra,
	\item almost surely, for all $t\geqslant 0$ and $x\in\R$, $R^{\mu, \varepsilon}$ and $S^{\mu, \varepsilon}$ satisfy
		\begin{gather*}\nonumber
		\sqrt{\mu}\, R^{\mu, \varepsilon}(t,x)=\sqrt{\mu}\, R^{\mu, \varepsilon}_0 (x)+\int_0^t A^{\mu, \varepsilon}(s,x) \, \ud s+\Phi^\eps(x)W(t),\\
			\sqrt{\mu}\, S^{\mu, \varepsilon}(t,x)=\sqrt{\mu}\, S^{\mu, \varepsilon}_0 (x)+\int_0^t B^{\mu, \varepsilon}(s,x)\, \ud s+\Phi^\eps(x)W(t),
	\end{gather*}
	where the drift terms $A^{\mu, \varepsilon}$, $B^{\mu, \varepsilon}$ are those given in \eqref{SVWEep}.
\end{enumerate}

\end{thm}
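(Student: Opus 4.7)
The strategy is the classical two-step programme for semilinear hyperbolic SPDEs with regularised data: establish local existence and uniqueness by a Banach fixed-point argument in a class of regular stochastic processes, and then extend the local solution to all $T > 0$ using a pathwise a priori bound enabled by the cut-off $\chi_\varepsilon$.

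For local well-posedness, I would fix $s > 3/2$ and work on $\mathcal{X}^s_{T_*} \eqdef L^p_\mathcal{P}(\Omega; C([0,T_*]; H^s(\T)))$. The mollifications $J_\varepsilon$ make the initial data $R^{\mu,\varepsilon}_0, S^{\mu,\varepsilon}_0, u^\varepsilon_0$ smooth, and $\Phi^\varepsilon$ is Hilbert--Schmidt into every $H^s(\T)$. Given $(\tilde R, \tilde S) \in \mathcal{X}^s_{T_*}\times \mathcal{X}^s_{T_*}$, I reconstruct $(\tilde u, \tilde \uu)$ via \eqref{udefep}--\eqref{wdefep} and solve the pair of linear transport SPDEs obtained by freezing the transport coefficients at $c(\tilde u)$ and the source terms at the values of the right-hand sides of \eqref{Reqep}--\eqref{Seqep} computed at $(\tilde R, \tilde S, \tilde u, \tilde \uu)$. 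Classical theory of linear transport SPDEs (via characteristics or Galerkin at the $H^s$ level) delivers a unique solution and the required regularity. Since $H^s$ is a Banach algebra for $s>1/2$ and the drifts are polynomial in $(\tilde R, \tilde S)$ with Lipschitz $f$-contribution, this solution map is a contraction on $\mathcal{X}^s_{T_*}$ for $T_* = T_*(\varepsilon,\mu,\text{data})$ sufficiently small. Pathwise uniqueness follows by an $L^2$-energy estimate on the difference of two solutions, the quadratic nonlinearities being locally Lipschitz.

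The heart of the proof is then a global pathwise bound that prevents blow-up and allows continuation to every $T > 0$. The cut-off $\chi_\varepsilon$ is tailored precisely for this: an elementary calculation gives, for $R \geqslant 1/\varepsilon$,
\[
R^2 - \chi_\varepsilon(R) = \tfrac{2R}{\varepsilon} - \tfrac{1}{\varepsilon^2},
\]
and an analogous identity for $S$, so that the net positive quadratic self-interaction in the drifts grows only linearly when $(R, S)$ are large. Applying It\^o's formula to $\int_\T (|R|^{2p}+|S|^{2p})\, \ud x$, integrating the transport terms by parts (and controlling the non-local term $\Theta^{\mu,\varepsilon}$ by $\|R\|_{L^1}+\|S\|_{L^1}$), should yield $L^p_\omega L^\infty_t L^{2p}_x$ estimates; letting $p\to\infty$ then produces the desired pathwise $L^\infty$ bound. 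Propagation of $H^s$ regularity follows from It\^o applied to $\|R\|_{H^s}^2 + \|S\|_{H^s}^2$, combined with Kato--Ponce commutator estimates for $[\Lambda^s, c(u)]R_x$, and Gronwall; a standard continuation argument extends the local solution to every $T > 0$.

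The main obstacle is precisely the global $L^\infty$ bound: without the cut-off, the quadratic sources $\tilde c'(u)R^2$ and $\tilde c'(u)S^2$ would fuel the Riccati-type blow-up mechanism responsible for the loss of regularity in the deterministic equation \eqref{CWE} (cf.~\cite{GlasseyHunterZheng1996, GV24}). The function $\chi_\varepsilon$ is designed to defeat this mechanism while preserving the structure (positivity, sign-matching with $\tilde c' \geqslant 0$) that makes the energy-type estimates work.
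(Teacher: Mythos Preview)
The paper itself does not give a proof of this theorem; it states the result and implicitly relies on the companion paper \cite{GV24} (where the analogous regularised system without friction and source is treated). Your overall two-step strategy---local well-posedness by a fixed-point argument in $H^s$, then global continuation thanks to the cut-off $\chi_\varepsilon$, followed by propagation of $H^s$ regularity via Kato--Ponce and Gr\"onwall---is exactly the standard route and matches what one finds in \cite{GV24}.

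One point deserves more care. Your proposed mechanism for the global $L^\infty$ bound (It\^o on $\int_\T (|R|^{2p}+|S|^{2p})\,\ud x$ and $p\to\infty$) runs into trouble with the \emph{cross terms}: the source of the $R$-equation contains $-\tilde c'(u)S^2$ (not cut off), which on $\{R<0\}$ contributes $+2p\,\tilde c'(u)\,|R|^{2p-1}S^2$ to the $L^{2p}$ balance, and the symmetric term for $S$. These do not close against $\int(|R|^{2p}+|S|^{2p})$ by Young's inequality alone. The clean way to get the pathwise $L^\infty$ bound---and the one used in \cite{GV24}---is a maximum-principle/characteristics argument after subtracting the additive noise $Z=\Phi^\varepsilon W/\sqrt{\mu}$: along the forward characteristic, when $R$ is large and positive the term $-\tilde c'(u)S^2\leqslant 0$ helps and $R^2-\chi_\varepsilon(R)\leqslant 2R/\varepsilon$ is linear, yielding an upper bound on $\max(R,S)$; once this is known, at a large negative minimum of $\min(R,S)$ one has $R^2-S^2\geqslant 0$ (since then $|S|\leqslant|R|$), and the Riccati term $\tilde c'(u)R^2$ pushes $R$ upward, giving the lower bound. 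The rest of your outline (control of $\Theta^{\mu,\varepsilon}$ via the energy, $H^s$ propagation via commutator estimates) is correct.
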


If the regularized problem admits smooth solutions, we consider much weaker solutions for the original problem \eqref{SVWE1}, since, basically, the corresponding $R^\mu$ and $S^\mu$ are essentially $L^2$ in space. Also, by lack of a uniqueness result, we have to consider martingale solutions.

\begin{definition}[Weak martingale solution]\label{def:WeakSol} Assume \eqref{defq}, \eqref{coeff-c} and \eqref{coeff-gammaf}. Let $u_0 \in H^1(\T)$ and $v_0 \in L^2(\T)$. We say that the problem \eqref{SVWE1} admits a weak martingale solution if there exists first a stochastic basis 
\begin{equation}\label{StochasticBasis}
\left(\tilde{\Omega},\tilde{\mathcal{F}},\tilde{\Pro},\left(\tilde{\mathcal{F}}_t\right),\left(\tilde{W}(t)\right)\right),
\end{equation}
where $\left(\tilde{W}(t)\right)$ is a cylindrical Wiener process on $\mathfrak{U}$, and, second, an adapted stochastic process $(u^\mu(t))$ with value in $H^1(\T)$ such that, for all $T>0$,
\begin{enumerate}
\item  $u^\mu_t,u^\mu_x\in C([0,T]; L^2(\T))$ $\tilde{\Pro}$-a.s., and
	\begin{equation*}
		\tilde{\E}\left[\sup_{t\in[0,T]}\left(\|u^\mu_t(t)\|^2_{L^2(\T)}+\|u^\mu_x(t)\|^2_{L^2(\T)} \right)\right] \leqslant C(\mu),
	\end{equation*}

\item $\tilde{\Pro}$-a.s., $u^\mu(0,\cdot)=u_0$ and for all $\varphi\in C^1(\T)$ and $t \in [0,T]$,
\begin{gather}\nonumber
\mu \int_\T u^\mu_t(t)  \varphi \, \ud x  - \mu \int_\T v_0  \varphi \, \ud x
  +  \int_0^t \int_\T \left( c(u^\mu(s))  \varphi \right)_x c(u^\mu(s))   u^\mu_x\,  \ud x \, \ud s\\
  \label{weaku}
      + \int_0^t \int_\T \gamma(u^\mu) u_t \varphi\, \ud x\, \ud t =   \int_0^t \int_\T f(u^\mu) \varphi\, \ud x\, \ud t +  \int_\T \varphi  \Phi  \tilde{W}(t)\, \ud x ,
\end{gather}
\item $\tilde{\Pro}$-a.s., the solution $u^\mu$ satisfies the energy ``dissipation'' inequality
\begin{gather*}\nonumber
\mu\, \mathcal{E}^\mu(t_2) + 4 \mu \int_{t_1}^{t_2}\|\gamma(u^\mu) (u_t^\mu)^2\|_{L^1(\T)}\, \ud t \leqslant \mu\, \mathcal{E}^\mu(t_1)\\ 
+  2 \int_{t_1}^{t_2}\|q\|_{L^1(\T)}\,  \ud t  + 4 \mu \int_{t_1}^{t_2} \int_\T u_t^\mu f(u^\mu)\, \ud x\, \ud t + 4 \mu \int_{t_1}^{t_2}\int_\T u^\mu_t  \Phi \, \ud x \, \ud \tilde{W},
\end{gather*}
for almost all $t_1 \in [0,\infty)$ and any $t_2 \geqslant t_1$.
\item $\tilde{\Pro}$-almost surely, for almost all $t_0 \in [0,\infty)$ we have 
\begin{equation}\label{rightcontinuity}
\lim_{t \downarrow t_0} \left\| \left( u^\mu_t(t)-u^\mu_t(t_0), u^\mu_x(t)-u^\mu_x(t_0) \right) \right\|_{L^2}  =  0.
\end{equation}
\end{enumerate}
\end{definition}

\begin{remark}
The right-continuity condition \eqref{rightcontinuity} can be interpreted as a dissipation condition. Indeed, Dafermos \cite{Dafermos} proved that, in the case of the Hunter--Saxton equation, the right-continuity condition is equivalent to the dissipation of the energy. This plays a crucial role in the uniqueness of solutions \cite{Dafermos}.
\end{remark}

Note that the stochastic basis \eqref{StochasticBasis} in Definition~\ref{def:WeakSol} depends on $\mu$ a priori. However, as long as we consider a countable collection of parameters $\mu$, a common stochastic basis can be selected. This is one of the assertions of the following result.

\begin{thm}[Global existence of weak martingale solutions]\label{thm:global-existR2SS} Let $u_0 \in H^1(\T)$ and $v_0 \in L^2(\T)$. Assume \eqref{defq}, \eqref{coeff-c}, \eqref{coeff-gammaf} and \eqref{Positivecprime00}. Let $\Lambda$ be a countable subset of $(0,\infty)$.
Then, for every $\mu\in\Lambda$, the problem \eqref{SVWE1} admits a weak global martingale solution, with a stochastic basis \eqref{StochasticBasis} independent on $\mu\in\Lambda$. Moreover, the solution satisfies 
\begin{itemize}
\item for all $p \in [1,3)$ we have 
\begin{equation}\label{L3estimates}
\tilde{\E} \int_{[0,T] \times \T} c'(u^\mu) \left[|u^\mu_t|^p  +  |u^\mu_x|^p \right] \ud x \, \ud t  \leqslant  C(T,p,\mu),
\end{equation} 
\item for all $p \in [1,2]$, there exists $C(p)>0$ such that for all $t \in (0,T]$, we have the entropy inequality 
\begin{equation}\label{Oleinik}
\tilde{\E} \left\| \left[ \sqrt{\mu} u^\mu_t  \pm  c(u^\mu)  u^\mu_x \right]^- \right\|_{L^\infty}^p\! (t)  \leqslant  C(p,T,\mu) \left( 1  +  t^{-p} \right),
\end{equation}
\item for any $p \geqslant 1$, there exist $C(T,p)>0$ and $\mu_0=\mu_0(T) \in (0,1)$, such that for any $\mu \in (0,\mu_0)$ we have 
\begin{equation}\label{main_estimates}
\tilde{\E} \left[  \sup_{t \in [0,T]} \left( \sqrt{\mu} \mathcal{E}^\mu + \|u^\mu\|_{L^2(\T)}^2  \right) + \int_0^T \left( \| u_x^{\mu}\|_{L^2(\T)}^2 + \mu^2  \|u_t^\mu\|_{L^2(\T)}^4\right) \ud t \right]^p \leqslant C(T,p),
\end{equation}
\item for any non-negative $\psi\in C^2_c((0,T) \times \T)$, we have 
\begin{equation}\label{ene_mu0}
\lim_{\mu} \tilde{\E} \left( \int_0^T \int_\T \left( 2 \mu \gamma(u^{\mu}) (u^{\mu}_t)^2 - q \right) \psi \, \ud x\,  \ud t \right)^+ = 0,
 \end{equation}
where $q$ is defined in \eqref{defq}.
\end{itemize}
\end{thm}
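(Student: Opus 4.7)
The plan is to construct the weak martingale solution as a limit of the smooth solutions of \eqref{SVWEep} furnished by Theorem~\ref{thm:glo-existRSep}, by establishing all the claimed bounds at the $\varepsilon$-regularized level with constants independent of $\varepsilon$ and then passing to the limit via stochastic compactness.

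The a priori estimates proceed as follows. It\^o's formula applied to $\mathcal{E}^{\mu,\varepsilon}\eqdef\|R^{\mu,\varepsilon}\|_{L^2}^2+\|S^{\mu,\varepsilon}\|_{L^2}^2$ enjoys three structural simplifications: the cubic terms $\tilde c'(R^2-S^2)$ cancel between \eqref{Reqep} and \eqref{Seqep}; the cut-off contributions $-\tilde c'\chi_\varepsilon R$ and $-\tilde c'\chi_\varepsilon S$ carry the favourable sign since $\tilde c'\geqslant 0$; and the friction produces the dissipative contribution $-4\int\gamma(u^{\mu,\varepsilon})(u^{\mu,\varepsilon}_t)^2\,\ud x$ via \eqref{ut by RSep}. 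The It\^o correction contributes the harmless $2\|q^\varepsilon\|_{L^1}\leqslant 2\|q\|_{L^1}$. Combined with Poincar\'e's inequality on the mean-zero part of $u^{\mu,\varepsilon}$ and Burkholder--Davis--Gundy, this yields \eqref{main_estimates} uniformly in $\mu$ small. The Oleinik-type bound \eqref{Oleinik} follows from a pointwise maximum-principle argument on the scalar Riccati-type equations satisfied by $R^{\mu,\varepsilon}$ and $S^{\mu,\varepsilon}$ along the characteristics $\pm c(u^{\mu,\varepsilon})$, in the deterministic spirit of \cite{ZhangZheng2005} and its stochastic adaptation in \cite{GV24}: the negative parts $[R^{\mu,\varepsilon}]^-$ are dominated by a forced ODE $\dot y\leqslant Cy^2+(\text{noise + friction})$ exhibiting $1/t$ decay, with the stochastic perturbation absorbed by moment estimates on $\Phi^\varepsilon$. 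Finally, \eqref{L3estimates} is obtained by a weighted energy inequality using the test functional $c'(u^{\mu,\varepsilon})|R^{\mu,\varepsilon}|R^{\mu,\varepsilon}$ (and its $S$-analogue), which produces the coercive integral $\int c'\,\tilde c'\,(|R|^3+|S|^3)\,\ud x$ on the left-hand side.

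For compactness and identification, the uniform bounds together with the time regularity read off \eqref{Reqep}--\eqref{Seqep} give tightness of the laws of $(u^{\mu,\varepsilon},u^{\mu,\varepsilon}_t,u^{\mu,\varepsilon}_x,W)$ in a suitable path space (e.g.~$C([0,T];H^{-s})\times L^2(0,T;H^{-s})^2\times C([0,T];\mathfrak{U}_{-1})$ for $s$ large enough). Jakubowski's generalisation of Skorokhod's theorem yields almost-surely converging copies on a new stochastic basis, with $u^{\mu,\varepsilon}$ strongly convergent in $L^2_{t,x}$ by a stochastic Aubin--Lions argument; this is what allows the nonlinear quantities $c(u^\mu)$, $\gamma(u^\mu)$, $f(u^\mu)$ and $(c(u^\mu)u^\mu_x)_x$ to be identified in the limit. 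Crucially, the Oleinik bound \eqref{Oleinik} forces $\chi_\varepsilon(R^{\mu,\varepsilon})$ and $\chi_\varepsilon(S^{\mu,\varepsilon})$ to vanish, so that the cut-off disappears and the limit solves \eqref{SVWE1} in the weak sense of Definition~\ref{def:WeakSol}(2). The dissipation inequality of Definition~\ref{def:WeakSol}(3) is inherited by weak lower semi-continuity of the $L^2$ norms; and \eqref{ene_mu0} is a direct corollary, obtained by testing this dissipation inequality against $\psi\geqslant 0$ and observing that the initial-energy term is $O(\mu)$, the drift from $f$ is $O(\sqrt{\mu})$ thanks to \eqref{main_estimates}, and the martingale part has vanishing $L^1(\tilde\Pro)$-norm as $\mu\to 0$.

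The main obstacle I expect is the right-continuity \eqref{rightcontinuity}, which, in the absence of a uniqueness theory, plays the role of a dissipativity selection criterion for the solution, in the spirit of Dafermos. Its proof exploits that $\mathcal{E}^\mu(t)$ only jumps downward by the dissipation inequality and therefore admits right limits a.s.; combined with weak lower semi-continuity of the $L^2$ norms and the reconstruction of $u^\mu_t,u^\mu_x$ from $(R^\mu,S^\mu)$ through the equation, this upgrades the weak convergence of $(u^\mu_t(t),u^\mu_x(t))$ as $t\downarrow t_0$ to strong $L^2$ convergence at a full-measure set of times $t_0$, where the energy is actually right-continuous.
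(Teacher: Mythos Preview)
Your overall architecture (regularize, obtain uniform bounds, stochastic compactness via Jakubowski--Skorokhod, identify the limit) matches the paper, and your sketches for \eqref{L3estimates}, \eqref{Oleinik}, the compactness step, and \eqref{rightcontinuity} are in line with what the paper defers to \cite{GV24}.

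There is, however, a genuine gap in how you obtain \eqref{main_estimates}. The energy identity for $\mathcal{E}^{\mu,\varepsilon}$ that you describe yields only
\[
\E\Bigl[\sup_{t\in[0,T]}\mathcal{E}^{\mu,\varepsilon}(t)\Bigr]^p\leqslant C\mu^{-p},
\qquad
\E\Bigl[\int_0^T\mu\|\uu^{\mu,\varepsilon}_t\|_{L^2}^2\,\ud t\Bigr]^p\leqslant C,
\]
neither of which gives $\int_0^T\|u^{\mu,\varepsilon}_x\|_{L^2}^2\,\ud t$ uniformly in $\mu$, and Poincar\'e's inequality goes the wrong way (it controls $u$ by $u_x$, not the converse). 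The paper instead multiplies the equation for $\uu^{\mu,\varepsilon}_t$ by a carefully constructed $h(u^{\mu,\varepsilon})$ (Lemma~\ref{Pro:Hdef} and Proposition~\ref{prop:friction E}) for which $(hc)'(u)\geqslant C^{-1}>0$; this produces the coercive term $(hc)'(u)c(u)(u_x)^2$ on the left and only $\mu h'(u)(\uu_t)^2$ on the right, which is bounded by the frictional energy. This is precisely what yields the uniform parabolic estimate (Proposition~\ref{Pro:uniform_mu}). The remaining two pieces of \eqref{main_estimates} --- $\sup_t\sqrt{\mu}\,\mathcal{E}^{\mu,\varepsilon}$ and $\int_0^T\mu^2\|\uu^{\mu,\varepsilon}_t\|_{L^2}^4$ --- are then obtained in Proposition~\ref{prop:energy estimate good} by applying It\^o's formula to $(\sqrt{\mu}\,\mathcal{E}^{\mu,\varepsilon})^2$ and exploiting the newly available bound $\int_0^T\mathcal{E}^{\mu,\varepsilon}\,\ud t\leqslant C$ (Corollary~\ref{cor:integral E}); this is an iteration, not a direct consequence of the energy law.

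Your derivation of \eqref{ene_mu0} is also affected: the paper's bound reads, schematically,
\[
\int_0^T\!\!\int_\T\bigl(4\mu\gamma(\uu_t)^2-2q^\varepsilon\bigr)\psi
\;\leqslant\; C\sqrt{\mu}\Bigl[\int_0^T\mathcal{E}^{\mu,\varepsilon}\,\ud t+\sup_t\|\uu^{\mu,\varepsilon}\|_{L^2}^2+\sup_t|Y^{\mu,\varepsilon}|+1\Bigr],
\]
and the factor $\int_0^T\mathcal{E}^{\mu,\varepsilon}\,\ud t$ is only $O(1)$ once the parabolic machinery above is in place; with the naive energy bound alone it would be $O(\mu^{-1})$ and the argument would fail.
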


\begin{proof}[Proof of Theorem~\ref{thm:global-existR2SS}] We will not give all the details of the existence of a global weak martingale solutions to \eqref{SVWE1} for fixed $\mu$. This is part of the reference \cite{GV25}, in which a martingale solution to problem \eqref{SVWE1} is obtained as a limit point of a sequence $(u^{\mu,\eps})$ of solutions to the truncated problem \eqref{SVWEep}. Let us comment the following points however.
\begin{itemize}
	\item This is the case without friction, \textit{i.e.} $\gamma=0$, which is treated in \cite{GV25}. Yet, the inclusion of friction with $\gamma \geqslant 0$ does not introduce additional difficulties in the proof. Indeed, the friction term has a good sign which, overall, will help to obtain better estimates. 
	Notice also that, thanks to \eqref{defRS}, the additional term $\gamma(u^\mu) u^\mu_t$ in \eqref{SVWE1} appears as a ``linear'' term in $R^\mu$ and $S^\mu$ in \eqref{Positivecprime00}, whereas the main difficulty in proving the existence theorem in \cite{GV25} arises from the quadratic terms in $R^\mu$ and $S^\mu$ in \eqref{Positivecprime00}. 
	\item In Section \ref{sec:energy} below,  we establish energy estimates for a fixed $\mu>0$ that are similar to those obtained in \cite{GV25}. In Section \ref{sec:ue}, we derive the estimates \eqref{main_estimates} and \eqref{ene_mu0} that are uniform on $\mu$, see in particular Section~\ref{sec:proof Th martingales}. 
\end{itemize}	
To obtain a stochastic basis independent on $\mu\in\Lambda$, one has simply to consider the whole collection $U^\eps:=(u^{\mu,\eps})_{\mu\in\Lambda}$. More precisely, the limiting procedure in \cite{GV25} is based on the Skorokhod--Jabukowski theorem applied, at fixed $\mu$, to the couple $(Z^{\mu,\eps},W)$, where $Z^{\mu,\eps}$ is an extended unknown based on $(R^{\mu,\eps},S^{\mu,\eps})$ considered in a quasi-Polish space $\mathcal{Z}^\mu$: see (5.17)-(5.18) in \cite{GV25} for instance (actually, $\mathcal{Z}^\mu$ is independent on $\mu$, but this does not matter here). Since a countable product of quasi-polish space, endowed with the product topology, is quasi-Polish (see Remark~5.2 in \cite{GV25}), we can consider the couple
$(\mathbf{Z}^\eps,W)$ in the space $\mathcal{Z}\times C([0,T];\mathfrak{U}_{-1})$, with
\begin{equation}\label{collective mu}
	\mathbf{Z}^\eps=(Z^{\mu,\eps})_{\mu\in\Lambda},\quad \mathcal{Z}=\prod_{\mu\in\Lambda}\mathcal{Z}^\mu.
\end{equation}
Each component $Z^{\mu,\eps}$ gives rise to a tight law in $\mathcal{Z}^\mu$, so $\mathbf{Z}^\eps$ as well. Indeed, indexing $\Lambda=\{\mu_k,k=1,2,\ldots\}$, we can select, for $\eps>0$ and each $k\geqslant 1$ a compact $K^{\mu_k}\in \mathcal{Z}^\mu$ such that $\Pro(Z^{\mu_k,\eps}\in K^{\mu_k})\geqslant 1-\eps/2^k$. The product $K$ over $\mu\in\Lambda$ of the compact sets $K^\mu$ is compact in $\mathcal{Z}$, and
\begin{equation*}
	\Pro(\mathbf{Z}^{\eps}\in K)\geqslant 1-\sum_{k\geqslant 1}\eps/2^k\geqslant 1-\eps.
\end{equation*}
We may then apply the Skorokhod-Jabukowski theorem to the couple $(\mathbf{Z}^\eps,W)$ and conclude the proof exactly as in \cite{GV25}. Indeed, each component of $\mathbf{Z}^{\eps}$ will have the desired convergence properties. We gain however the fact that the stochastic basis constructed at the end is independent on $\mu$.
\end{proof}

\subsection{The small-mass limit}

	Let $(\mu_k)$ be a sequence of positive numbers decreasing to $0$ and let $\Lambda=\{\mu_k;k=1,2,\ldots\}$. We consider a sequence of martingale solutions $(u^\mu)_{\mu\in\Lambda}$ to \eqref{SVWE1}, our aim being to determine the limit of $u^{\mu_k}$ when $k\to\infty$. By Theorem \ref{thm:global-existR2SS}, such a sequence exists and is defined on a stochastic basis  
	\begin{equation}\label{Stochastic Basis again}
		\left(\tilde{\Omega},\tilde{\mathcal{F}},\tilde{\Pro},\left(\tilde{\mathcal{F}}_t\right),\left(\tilde{W}(t)\right)\right).
	\end{equation}
	We emphasize here for the last time that we cannot consider a sequence of solutions to \eqref{SVWE1} on the original stochastic basis; at any rate, we cannot justify the existence of such a sequence. This being said, and since this modification of the probabilistic data are irrelevant in what follows, \emph{we will now remove the tildas on the components of the stochastic basis} \eqref{Stochastic Basis again}. From a pratical point of view, taking such liberties with the notations will allow us to use again tildas (and not double-tildas...) when we make appeal to Skorokhod representatives to establish the limit of $u^\mu$ when $\mu\to0$ in Section~\ref{subsec:CV}.

Before we come to a result of convergence, let us specify the notion of solutions to \eqref{limeq version rhoc} and \eqref{limeq version u} which we consider, and which are similar to \cite[Definition~2.1]{HofmanovaZhang17}.

\begin{definition}[Weak solution to the limit stochastic quasi-linear equation on $\imp$]\label{def:weak sol limit equation p} 
Let $\imp_0\in L^2(\T)$. An $(\mathcal{F}_t)$-adapted, $L^2(\T)$-valued process $(\imp(t))$ is said to be a weak solution to \eqref{limeq version rhoc} with initial datum $\imp_0$ if 
		\begin{enumerate}
			\item $\imp\in L^2(\Omega\times[0,T];H^1(\T))$, for all $T>0$,
			\item for any $\varphi\in C^\infty(\T)$, $\Pro$-almost surely,
			\begin{gather}\nonumber
				\dual{\imp(t)}{\varphi}_{L^2(\T)}-\dual{\imp_0}{\varphi}_{L^2(\T)}\\
				\nonumber
				=-\int_0^t \left(\dual{b(\imp(s))\imp_x(s)}{\varphi_x}_{L^2(\T)}+\dual{F(\cdot,\imp(s))}{\varphi}_{L^2(\T)}\right)\ud s
				+\int_0^t \dual{\Psi(\imp(s))}{\varphi}_{L^2(\T)}\, \ud W(s).
			\end{gather}
		\end{enumerate}
\end{definition}
Similarly, we can define the notion of solutions to \eqref{limeq version u}.
\begin{definition}[Weak solution to the limit stochastic quasi-linear equation on $u$]\label{def:weak sol limit equation u} 
Let $u_0\in L^2(\T)$. An $(\mathcal{F}_t)$-adapted, $L^2(\T)$-valued process $(u(t))$ is said to be a weak solution to \eqref{limeq version u} with initial datum $u_0$ if 
		\begin{enumerate}
			\item $u \in L^2(\Omega\times[0,T];H^1(\T))$, for all $T>0$,
			\item for any $\varphi\in C^\infty(\T)$, $\Pro$-almost surely,
			\begin{align*}\nonumber
				\dual{u(t)-u_0}{\varphi}_{L^2(\T)}
				&=-\int_0^t \left( \left\langle c(u(s))u_x(s), \left(\tfrac{c(u)}{\gamma(u)}\varphi\right)_x \right\rangle_{L^2(\T)}
				+ \left\langle \tfrac{f(u)}{\gamma(u)} -\tfrac{\gamma'(u)q}{2\gamma(u)^3} , \varphi \right\rangle_{L^2(\T)}\right)\ud s\\
				&\quad +\int_0^t \left\langle \tfrac{1}{\gamma(u(s))} \Phi, \varphi\right\rangle_{L^2(\T)} \ud W(s).
		\end{align*}
		\end{enumerate}
\end{definition}
Following \cite[Theorem 3.1]{HofmanovaZhang17} and \cite[Theorem 6.2]{CerraiXi22} one can prove that \eqref{limeq version rhoc} admits at most one solution in the sense of Definition \ref{def:weak sol limit equation p}. Using It\^o's formula in Proposition \ref{Proposition:Ito1} we deduce also the uniqueness of solutions to \eqref{limeq version u}.    

We can now state the main theorem of this paper.

\begin{thm}[Smoluchowski--Kramers approximation]\label{thm:SK}
	Let $T>0$. Let $(\mu_k)$ be a sequence of positive numbers decreasing to $0$ and let $\Lambda=\{\mu_k;k=1,2,\ldots\}$.
Assume \eqref{defq}, \eqref{coeff-c}, \eqref{coeff-gammaf}, \eqref{c'g}, \eqref{cgamma} and \eqref{Positivecprime00}. Let $(u^\mu)_{\mu\in\Lambda}$ be a sequence of solutions of \eqref{SVWE1} satisfying
\begin{itemize}
\item $\Pro$-a.s., for all $\mu\in\Lambda$, $u^\mu(0,\cdot)=u_0$ and for all $\varphi\in C^1(\T)$ and $t \in [0,T]$,
\begin{gather}\nonumber
\mu \int_\T u^\mu_t(t)  \varphi \, \ud x  - \mu \int_\T v_0  \varphi \, \ud x
  +  \int_0^t \int_\T \left( c(u^\mu(s))  \varphi \right)_x c(u^\mu(s))   u^\mu_x\,  \ud x \, \ud s\\
  \label{weakuSK}
      + \int_0^t \int_\T \gamma(u^\mu) u_t \varphi\, \ud x\, \ud t =   \int_0^t \int_\T f(u^\mu) \varphi\, \ud x\, \ud t +  \int_\T \varphi  \Phi W(t) \, \ud x,
\end{gather}
\item for any $p \geqslant 1$, there exist $C(T,p)>0$  such that, for any $\mu \in \Lambda$, we have
\begin{equation}\label{main_estimates2}
\E \left[  \sup_{t \in [0,T]} \left( \sqrt{\mu} \mathcal{E}^\mu +  \int_\T \left(u^{\mu}\right)^2 \ud x \right) + \int_0^T \left( \| u_x^{\mu}\|_{L^2(\T)}^2 + \mu^2  \|u_t^\mu\|_{L^2(\T)}^4\right) \ud t \right]^p \leqslant C(T,p),
\end{equation}
\item for any non-negative $\psi\in C^2_c((0,T) \times \T)$, we have 
\begin{equation}\label{ene_mu02}
\lim_{k\to\infty} \E \left( \int_0^T \int_\T \left( 2 \mu \gamma(u^{\mu_k}) (u^{\mu_k}_t)^2 - q \right) \psi \, \ud x\,  \ud t \right)^+ = 0,
 \end{equation}
where $q$ is defined in \eqref{defq}.
\end{itemize}
Then, for any $\eta>0$, $p\in [1,\infty)$ and $\delta \in (0,1)$, we have 
	\begin{equation}\label{CV in proba umu}
\lim_{k\to\infty} \Pro \left(\|u^{\mu_k} - u\|_{L^2([0,T]; H^{\delta}(\T))} + \|u^{\mu_k} - u\|_{L^p([0,T]; L^2(\T))} > \eta \right)  = 0,
\end{equation}
where $u$ is the unique solution to \eqref{limeq version u} in the sense of Definition \ref{def:weak sol limit equation u}.
\end{thm}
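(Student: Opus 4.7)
The plan is a classical compactness plus uniqueness argument, concluded by the Gy\"ongy--Krylov principle. Concretely, the four steps are: (i) use the $\mu$-uniform estimates \eqref{main_estimates2} and \eqref{ene_mu02} to obtain tightness of the laws of $(u^\mu)$ in $L^2(0,T; H^\delta(\T)) \cap L^p(0,T; L^2(\T))$; (ii) extract via Skorokhod a subsequence converging almost surely on a new probability space and identify the limit as a weak solution of \eqref{limeq version rho u}, for which the defect measure appearing from the quasilinear term is forced to vanish by \eqref{ene_mu02}; (iii) rewrite \eqref{limeq version rho u} as \eqref{limeq version u} and then as \eqref{limeq version rhoc} to benefit from the uniqueness results of \cite{HofmanovaZhang17,CerraiXi22}; (iv) apply \cite[Lemma 1.1]{GyongyKrylov96} to upgrade convergence in law to convergence in probability. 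Step (ii) is the heart of the proof.

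For step (i), the key observation is that, writing $\gamma(u^\mu) u^\mu_t\, \ud t = \ud \Gamma(u^\mu)$, the weak formulation \eqref{weakuSK} reads, in the sense of distributions on $\T$,
\begin{equation*}
\ud \bigl( \Gamma(u^\mu) + \mu u^\mu_t \bigr) = \bigl[ c(u^\mu) (c(u^\mu) u^\mu_x)_x + f(u^\mu) \bigr] \ud t + \Phi\, \ud W.
\end{equation*}
The drift is uniformly bounded in $L^2(\Omega; L^1(0,T; H^{-1}(\T)))$ thanks to the $L^2(0,T; H^1(\T))$ bound on $u^\mu$ from \eqref{main_estimates2}, while the stochastic term is uniformly bounded in $L^2(\Omega; C^{1/2}([0,T]; L^2(\T)))$. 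Combining Cauchy--Schwarz with \eqref{main_estimates2} yields $\mu u^\mu_t \to 0$ in $L^2(\Omega \times (0,T); L^2(\T))$, so $\Gamma(u^\mu)$ inherits the same time regularity as $\Gamma(u^\mu) + \mu u^\mu_t$, modulo a vanishing error. Since $\Gamma$ is a bi-Lipschitz diffeomorphism of $\R$, an Aubin--Lions--Simon type compactness argument then produces tightness of the laws of $(u^\mu)$ in the announced path space.

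Step (ii) is the main obstacle, and the place where the assumption \eqref{ene_mu02} is decisive. Once Skorokhod has produced an almost surely convergent copy $\tilde u^\mu \to u$, every term in \eqref{weakuSK} passes to the limit directly except the quasilinear one, for which only weak $L^2$ convergence of $u^\mu_x$ is available. As in \cite{GV24}, this produces in the limit a non-negative defect measure $\hat a$ on $(0,T)\times\T$ with
\begin{equation*}
\ud \Gamma(u) - c(u) (c(u) u_x)_x\, \ud t - \ud \hat a = f(u)\, \ud t + \Phi\, \ud W.
\end{equation*}
To identify $\hat a = 0$ (this is precisely the content of Proposition \ref{prop:Identification}), the strategy is to apply the It\^o formula of Appendix \ref{app:Ito} to the squared-energy $\int c(u)^2 u_x^2\, \ud x$ and compare the resulting energy balance for $u$ with the limit of the energy balance for $u^\mu$; the hypothesis \eqref{ene_mu02} pairs with the noise--It\^o correction to deliver an upper bound $\langle \hat a, \psi\rangle \leqslant 0$ tested against non-negative $\psi$, hence $\hat a = 0$ by non-negativity.

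Finally, step (iii) reduces to the observation that, by Proposition \ref{Proposition:Ito1}, the change of unknown $\imp = \underline\Gamma(u)$ transforms any weak solution of \eqref{limeq version u} in the sense of Definition \ref{def:weak sol limit equation u} into a weak solution of \eqref{limeq version rhoc} in the sense of Definition \ref{def:weak sol limit equation p} (and conversely); the uniqueness established in \cite{HofmanovaZhang17,CerraiXi22} for \eqref{limeq version rhoc} therefore transfers to \eqref{limeq version u}. Step (iv) is then a standard application of \cite[Lemma 1.1]{GyongyKrylov96}: every joint subsequential limit of $(u^\mu, u^{\mu'})$ consists almost surely of a pair of weak solutions of \eqref{limeq version u} driven by the same noise and starting from the same initial data, so uniqueness forces the coordinates to agree, yielding the convergence in probability \eqref{CV in proba umu}.
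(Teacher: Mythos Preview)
Your four-step outline matches the paper's structure, and steps (i), (iii), (iv) are accurate. The gap is in step (ii): your description of how $\hat a = 0$ is obtained does not reflect the actual mechanism and, taken literally, would not go through.

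The paper does \emph{not} apply It\^o to a squared energy $\int c(u)^2 u_x^2\,\ud x$ for the limit and compare energy balances (the limiting equation is parabolic, with no kinetic term against which \eqref{ene_mu02} could be paired directly). Instead, the argument carries the auxiliary quantity $r^\mu \eqdef \mu\gamma(u^\mu)(u^\mu_t)^2$ --- normalised so as to sit in a weakly compact ball of $L^2(0,T;H^{-\alpha}(\T))$ --- through the Skorokhod step, producing a weak limit $r$, and then compares \emph{two} limiting equations for $\rho = \Gamma(u)$. The first is the limit of the $\rho^\mu$-equation you wrote and carries the defect measure $\hat a$ (equation~\eqref{eq ei with defect measure}). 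The second is obtained by passing to the limit in the equation for $\theta^\mu = \underline\Gamma(u^\mu)$ --- which, via Proposition~\ref{Proposition:Ito2}, contains $r^\mu$ rather than $(u^\mu_x)^2$ --- and then changing variable back through $\rho = \Gamma\circ\underline\Gamma^{-1}(\theta)$ using Proposition~\ref{Proposition:Ito1}; it is this change of unknown that generates the It\^o correction $q/2$ (equation~\eqref{rho2.5}). Subtracting yields $c(u)\gamma(u)\,\hat a = c'(u)\bigl(r - \tfrac{q}{2}\bigr)$ in $\mathcal{D}'$. Hypothesis \eqref{ene_mu02} then gives $r \leqslant q/2$, and since $c' \geqslant 0$ and $\hat a \geqslant 0$, both sides vanish. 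So the It\^o correction enters through a change of unknown at the level of the limit SPDE, not through an energy identity; your sketch misses this, and in particular does not explain why $r^\mu$ (and $a^\mu \eqdef c'(u^\mu)c(u^\mu)(u^\mu_x)^2$) must be included among the variables whose joint laws are shown to be tight.
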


Note that we obtain the convergence of $u^\mu$ to $u$ and not only the convergence of a Skorokhod representation. The fact that $u^\mu$ converges in the same probability space follows in a standard way from the pathwise uniqueness of solutions to \eqref{limeq version u}, together with the Gy\"ongy--Krylov argument, \cite[Lemma 1.1]{GyongyKrylov96}, see Section \ref{sec:uniquness} below.

\section{The energy estimates}\label{sec:energy}

In this section we derive the standard energy estimate for \eqref{SVWEep}. This provides various bounds which may be singular when $\mu\to 0$. In the next section~\ref{sec:ue}, we will see how to exploit this first set of inequalities, and the gain due to the friction term, to derive some estimates that are uniform with respect to $\mu$.

\begin{proposition}[Conservation law for the energy]\label{prop:energy identity} Let $\eps,\mu\in(0,1)$. The solution $(R^{\mu, \varepsilon}, S^{\mu, \varepsilon})$ to \eqref{SVWEep} satisfies the energy identity
	\begin{gather}\nonumber
		((R^{\mu, \varepsilon})^2+(S^{\mu, \varepsilon})^2)(\tau)  +  \tfrac{2}{\sqrt{\mu}} \int_0^{\tau} \tilde{c}'(u^{\mu, \varepsilon}) \left[  R^{\mu, \varepsilon}  \chi_\varepsilon(R^{\mu, \varepsilon})  +  S^{\mu, \varepsilon}  \chi_\varepsilon(S^{\mu, \varepsilon})
		\right] \ud t \\ \nonumber
		+  4 \int_0^{\tau}  \gamma(u^{\mu, \varepsilon}) \left(\uu_t^{\mu, \varepsilon} \right)^2\, \ud t =  (R^{\mu, \varepsilon})^2(0)  +  (S^{\mu, \varepsilon})^2(0)  + 
		\tfrac{1}{\sqrt{\mu}} \int_0^{\tau} \left[c(u^{\mu, \varepsilon})  ((S^{\mu, \varepsilon})^2-(R^{\mu, \varepsilon})^2)  \right]_x \ud t   \\  \label{sumsquare-theta}
		\quad +  \tfrac{2}{\mu} q^\varepsilon \tau  + 4 \int_0^{\tau} \uu_t^{\mu, \varepsilon} f(\uu^{\mu, \varepsilon})\, \ud t + 4 \int_0^{\tau}  \uu_t^{\mu, \varepsilon} \Phi^\varepsilon \, \ud W,
	\end{gather}
	for all stopping time $\tau>0$.
\end{proposition}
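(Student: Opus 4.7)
The plan is to apply Itô's formula pointwise in $x \in \T$ to the real-valued process $(R^{\mu,\eps})^2 + (S^{\mu,\eps})^2$. Theorem~\ref{thm:glo-existRSep} guarantees that, for each fixed $x$, both $R^{\mu,\eps}(\cdot,x)$ and $S^{\mu,\eps}(\cdot,x)$ are continuous Itô processes whose drifts are obtained by dividing the right-hand sides of \eqref{Reqep}--\eqref{Seqep} by $\sqrt\mu$ and whose diffusion coefficient is $\Phi^\eps(x)/\sqrt\mu$. Summing $\ud(R^{\mu,\eps})^2$ and $\ud(S^{\mu,\eps})^2$ produces the Itô correction $2q^\eps(x)/\mu\,\ud t$ from the additive noise, which accounts for the $(2/\mu)q^\eps\tau$ term on the right of \eqref{sumsquare-theta}.

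The heart of the argument is the algebraic cancellation of every $\Theta^{\mu,\eps}$-dependent and cubic-in-$(R,S)$ drift contribution. For the transport part I would use $2(cS S_x - cR R_x) = [c(u^{\mu,\eps})(S^2 - R^2)]_x - c'(u^{\mu,\eps})\,u^{\mu,\eps}_x(S^2 - R^2)$ and then invoke the pointwise identity $c(u^{\mu,\eps})\,u^{\mu,\eps}_x = (S^{\mu,\eps} - R^{\mu,\eps})/2 - \Theta^{\mu,\eps}$, which is obtained by differentiating \eqref{udefep} in $x$, together with $\tilde{c}' = c'/(4c)$; this rewrites the non-conservative remainder as $2\tilde{c}'(S-R)^2(R+S) - 4\tilde{c}'\,\Theta^{\mu,\eps}(S^2 - R^2)$. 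On the other hand, the cubic drifts $\tilde{c}'(R^2 - S^2)$ and $\tilde{c}'(S^2 - R^2)$ in \eqref{Reqep}--\eqref{Seqep}, after multiplication by $2R$ and $2S$ respectively, contribute $2\tilde{c}'[R(R^2 - S^2) + S(S^2 - R^2)] = 2\tilde{c}'(R+S)(R-S)^2$, which cancels the first piece, while the $+2R\Theta^{\mu,\eps}$ and $-2S\Theta^{\mu,\eps}$ corrections contribute $4\tilde{c}'\,\Theta^{\mu,\eps}(R^2 - S^2)$, cancelling the second. The correction $\Theta^{\mu,\eps}$ is tailored precisely so that these two cancellations happen simultaneously; bookkeeping these signs is the most error-prone step and the main conceptual obstacle of the proof.

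For the remaining terms, the identity $\uu_t^{\mu,\eps} = (R^{\mu,\eps} + S^{\mu,\eps})/(2\sqrt\mu)$ from \eqref{ut by RSep} converts the friction drift $-\gamma(u^{\mu,\eps})(R+S)^2/\mu$ into $-4\gamma(u^{\mu,\eps})(\uu_t^{\mu,\eps})^2$, the source drift into $4\uu_t^{\mu,\eps} f(\uu^{\mu,\eps})$, and the martingale differential into $4\uu_t^{\mu,\eps}\Phi^\eps\,\ud W$, while the $\chi_\eps$ cut-off terms survive unchanged as $-(2/\sqrt\mu)\tilde{c}'[R\chi_\eps(R) + S\chi_\eps(S)]\ud t$, to be moved to the left-hand side of \eqref{sumsquare-theta}. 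Collecting everything and integrating from $0$ to a deterministic time $t$ yields the desired identity with $\tau$ replaced by $t$; the a.s.\ continuity in time of every summand (the stochastic integral included, thanks to the regularity of Theorem~\ref{thm:glo-existRSep}) then extends it to an arbitrary stopping time $\tau$ by the usual optional-stopping argument.
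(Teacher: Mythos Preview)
Your proposal is correct and takes the same route as the paper: apply It\^o's formula to $R^2$ and $S^2$, put the transport term in conservative form via the identity $c(u^{\mu,\eps})u^{\mu,\eps}_x=(S^{\mu,\eps}-R^{\mu,\eps})/2-\Theta^{\mu,\eps}$ (which is \eqref{cuxthetaeps} in the paper), and verify that the residual cubic and $\Theta^{\mu,\eps}$-contributions cancel upon summation; the paper packages the computation through a general entropy $h$ and the form $B_h(R,S)=2h(R)-(R+S)h'(R)$, specialising to $h(R)=R^2$ only at the end, whereas you work directly with $h(R)=R^2$. One sign to recheck: the non-conservative remainder coming from $-c'(u^{\mu,\eps})u^{\mu,\eps}_x(S^2-R^2)$ is $-2\tilde c'(S-R)^2(R+S)+4\tilde c'\,\Theta^{\mu,\eps}(S^2-R^2)$, not the expression you wrote---with this correction your two claimed cancellations are indeed exact.
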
	

\begin{proof}[Proof of Proposition~\eqref{prop:energy identity}] Note first that it follows from \eqref{udefep}, by differentiation with respect to $x$, that
\begin{equation}\label{cuxthetaeps}
c(u^{\mu, \varepsilon}) u^{\mu, \varepsilon}_x
=\tfrac{S^{\mu, \varepsilon}-R^{\mu, \varepsilon}}{2}-\Theta^{\mu, \varepsilon}.
\end{equation}
Let $h\in C^2(\R)$. By the It\^o formula, we deduce from \eqref{SVWEep} that
\begin{gather}\nonumber
\sqrt{\mu}\, \ud h(R^{\mu, \varepsilon})  +  c(u^{\mu, \varepsilon})  h(R^{\mu, \varepsilon})_x \, \ud t  + h'(R^{\mu, \varepsilon}) \gamma(u^{\mu, \varepsilon})\uu_t^{\mu, \varepsilon} \, \ud t =\\  \nonumber  
\tilde{c}'(u^{\mu, \varepsilon}) \left[h'(R^{\mu, \varepsilon})  ((R^{\mu, \varepsilon})^2  -  (S^{\mu, \varepsilon})^2  -  \chi_\varepsilon(R^{\mu, \varepsilon})  +2  R^{\mu, \varepsilon}  \Theta^{\mu, \varepsilon} )   \right]  \ud t +  \tfrac{1}{2\sqrt{\mu}}  q^\varepsilon  h''(R^{\mu, \varepsilon})\, \ud t \\ \nonumber
+   h'(R^{\mu, \varepsilon}) f(\uu^{\mu, \varepsilon})\, \ud t +  h'(R^{\mu, \varepsilon})  \Phi^\varepsilon \, \ud W, 
\end{gather}
and
\begin{gather}\nonumber
\sqrt{\mu}\, \ud h(S^{\mu, \varepsilon})  -  c(u^{\mu, \varepsilon})  h(S^{\mu, \varepsilon})_x \, \ud t  + h'(S^{\mu, \varepsilon}) \gamma(u^{\mu, \varepsilon}) \uu_t^{\mu, \varepsilon} \, \ud t= \\  \nonumber \tilde{c}'(u^{\mu, \varepsilon}) \left[h'(S^{\mu, \varepsilon})  ((S^{\mu, \varepsilon})^2  -  (R^{\mu, \varepsilon})^2  -  \chi_\varepsilon(S^{\mu, \varepsilon})  -2  S^{\mu, \varepsilon}  \Theta^{\mu, \varepsilon} )  \right]  \ud t +  \tfrac{1}{2\sqrt{\mu}}  q^\varepsilon  h''(S^{\mu, \varepsilon}) \, \ud t \\ \nonumber
 +   h'(S^{\mu, \varepsilon}) f(\uu^{\mu, \varepsilon})\, \ud t +  h'(S^{\mu, \varepsilon})  \Phi^\varepsilon \, \ud W.
\end{gather}
We can then use the identity \eqref{cuxthetaeps} to obtain the conservative form
\begin{gather}\nonumber
\sqrt{\mu}\, \ud h(R^{\mu, \varepsilon})  + \left( c(u^{\mu, \varepsilon})  h(R^{\mu, \varepsilon})\right)_x  \ud t  + h'(R^{\mu, \varepsilon}) \gamma(u^{\mu, \varepsilon}) \uu_t^{\mu, \varepsilon} \, \ud t =\\ \nonumber
  \tilde{c}'(u^{\mu, \varepsilon}) \left[(S^{\mu, \varepsilon}  -  R^{\mu, \varepsilon}) B_h(R^{\mu, \varepsilon},S^{\mu, \varepsilon})  -  h'(R^{\mu, \varepsilon}) \chi_\varepsilon(R^{\mu, \varepsilon})   \right]\ud t
  +  \tfrac{1}{2\sqrt{\mu}}  q^\varepsilon  h''(R^{\mu, \varepsilon})\, \ud t\\ \label{ReqepthetaItoConservative} 
+ 2  \tilde{c}'(u^{\mu, \varepsilon})  (R^{\mu, \varepsilon}   h'(R^{\mu, \varepsilon} )  -  2  h(R^{\mu, \varepsilon} ))  \Theta^{\mu, \varepsilon}  \, \ud t  +   h'(R^{\mu, \varepsilon}) f(\uu^{\mu, \varepsilon})\, \ud t +  h'(R^{\mu, \varepsilon})  \Phi^\varepsilon \, \ud W, 
\end{gather}
and
\begin{gather}\nonumber
\sqrt{\mu}\, \ud h(S^{\mu, \varepsilon})  - \left( c(u^{\mu, \varepsilon})  h(S^{\mu, \varepsilon}) \right)_x  \ud t  + h'(S^{\mu, \varepsilon}) \gamma(u^{\mu, \varepsilon}) \uu_t^{\mu, \varepsilon} \, \ud t = \\ \nonumber \tilde{c}'(u^{\mu, \varepsilon}) \left[(R^{\mu, \varepsilon}  -  S^{\mu, \varepsilon}) B_h(S^{\mu, \varepsilon},R^{\mu, \varepsilon})  -  h'(S^{\mu, \varepsilon}) \chi_\varepsilon(S^{\mu, \varepsilon})  \right] \ud t  +  \tfrac{1}{2\sqrt{\mu}}  q^\varepsilon h''(S^{\mu, \varepsilon})\, \ud t \\ 
 \label{SeqepthetaItoConservative}
- 2  \tilde{c}'(u^{\mu, \varepsilon})  (S^{\mu, \varepsilon}   h'(S^{\mu, \varepsilon} )  -  2  h(S^{\mu, \varepsilon} ))  \Theta^{\mu, \varepsilon}  \, \ud t  +   h'(S^{\mu, \varepsilon}) f(\uu^{\mu, \varepsilon})\, \ud t +  h'(S^{\mu, \varepsilon})  \Phi^\varepsilon \, \ud W,
\end{gather}
where
\begin{equation*}
B_h(R,S)  \eqdef  2  h(R)  -  (R+S)  h'(R).
\end{equation*}
Taking $h(R)=R^2$, we obtain 
\begin{equation*}
B_h(R,S)=-2RS=B_h(S,R),\quad R  h'(R)  -  2  h(R)=0,
\end{equation*}
and, by adding \eqref{ReqepthetaItoConservative} to \eqref{SeqepthetaItoConservative} and using the identity \eqref{ut by RSep}, we get \eqref{sumsquare-theta}.
\end{proof}	

\begin{proposition}[Correction terms]\label{prop:ut} Let $\Theta^{\mu, \varepsilon}$ be defined by \eqref{psieps}. Set also
\begin{equation}\label{xiut}
\Xi^{\mu, \varepsilon}  \eqdef \sqrt{\mu} \left(u^{\mu, \varepsilon}_t-\uu^{\mu, \varepsilon}_t\right)  =\sqrt{\mu}  u^{\mu, \varepsilon}_t  -  \tfrac{R^{\mu, \varepsilon}+S^{\mu, \varepsilon}}{2}.
\end{equation}
We have then
\begin{equation}\label{psiprime}
\sqrt{\mu} \frac{\ud\ }{\ud t} \Theta^{\mu, \varepsilon}(t)=\alpha_\chi^{\mu, \varepsilon}(t)+\beta^{\mu, \varepsilon}(t)\Theta^{\mu, \varepsilon} (t),
\end{equation}
where
\begin{equation*}
\alpha_\chi^{\mu, \varepsilon}(t)=\tfrac12\int_\T \tilde{c}'(u^{\mu, \varepsilon})\left(\chi_\varepsilon(R^{\mu, \varepsilon})-\chi_\varepsilon(S^{\mu, \varepsilon})\right) \ud y,
\quad\beta^{\mu, \varepsilon}(t)=\int_\T \tilde{c}'(u^{\mu, \varepsilon})\left(R^{\mu, \varepsilon}+S^{\mu, \varepsilon}\right) \ud y,
\end{equation*}
and
\begin{equation}\label{utOK}
\Xi^{\mu, \varepsilon} =\tfrac{1}{c(u^{\mu, \varepsilon})}\int_0^x \left[\zeta^{\mu, \varepsilon}(t,y)-\bar{\zeta}^{\mu, \varepsilon}(t)\right]\ud y,
\end{equation}
where
\begin{equation}\label{zeta}
\zeta^{\mu, \varepsilon}=\tilde{c}'(u^{\mu, \varepsilon})\left[\tfrac{\chi_\varepsilon(R^{\mu, \varepsilon})-\chi_\varepsilon(S^{\mu, \varepsilon})}{2}+(R^{\mu, \varepsilon}+S^{\mu, \varepsilon})\Theta^{\mu, \varepsilon} \right],\quad \bar{\zeta}^{\mu, \varepsilon}(t)=\int_0^1\zeta^{\mu, \varepsilon}(t,y) \, \ud y.
\end{equation}
\end{proposition}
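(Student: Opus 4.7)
Both identities are deterministic, pointwise consequences of the stochastic system \eqref{SVWEep}: in each of the two differences that will appear, the noise contribution $\Phi^\varepsilon\,\ud W$, the friction term and the source term $f$ all cancel. For \eqref{psiprime} I differentiate $\Theta^{\mu,\varepsilon}=\int_0^1 \tfrac{S^{\mu,\varepsilon}-R^{\mu,\varepsilon}}{2}\,\ud y$ in time using the $R$-- and $S$--equations. For \eqref{utOK} I exploit the commutation $\partial_t(c(u^{\mu,\varepsilon})u^{\mu,\varepsilon}_x)=\partial_x(c(u^{\mu,\varepsilon})u^{\mu,\varepsilon}_t)$, which follows because both sides equal $c'(u)u_tu_x+c(u)u_{xt}$, combined with the identity $c(u^{\mu,\varepsilon})u^{\mu,\varepsilon}_x=\tfrac{S^{\mu,\varepsilon}-R^{\mu,\varepsilon}}{2}-\Theta^{\mu,\varepsilon}$ from \eqref{cuxthetaeps}.

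\textbf{Step 1: proof of \eqref{psiprime}.} Subtracting \eqref{Reqep} from \eqref{Seqep}, I obtain
\begin{equation*}
\sqrt\mu\,\ud(S^{\mu,\varepsilon}-R^{\mu,\varepsilon}) = c(u^{\mu,\varepsilon})(R^{\mu,\varepsilon}+S^{\mu,\varepsilon})_x\,\ud t + \tilde c'(u^{\mu,\varepsilon})\bigl[2((S^{\mu,\varepsilon})^2-(R^{\mu,\varepsilon})^2)+\chi_\varepsilon(R^{\mu,\varepsilon})-\chi_\varepsilon(S^{\mu,\varepsilon})-2(R^{\mu,\varepsilon}+S^{\mu,\varepsilon})\Theta^{\mu,\varepsilon}\bigr]\,\ud t.
\end{equation*}
Integrating over $\T$ and dividing by $2$, I rewrite $c(u)(R+S)_x=(c(u)(R+S))_x - c'(u)u_x(R+S)$; the first summand integrates to zero by periodicity, while \eqref{cuxthetaeps} together with $c'/c = 4\tilde c'$ gives $c'(u^{\mu,\varepsilon})u^{\mu,\varepsilon}_x(R^{\mu,\varepsilon}+S^{\mu,\varepsilon}) = 2\tilde c'(u^{\mu,\varepsilon})((S^{\mu,\varepsilon})^2-(R^{\mu,\varepsilon})^2) - 4\tilde c'(u^{\mu,\varepsilon})(R^{\mu,\varepsilon}+S^{\mu,\varepsilon})\Theta^{\mu,\varepsilon}$. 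The $\tilde c'(S^2-R^2)$ contributions then cancel and only $\alpha_\chi^{\mu,\varepsilon}+\beta^{\mu,\varepsilon}\Theta^{\mu,\varepsilon}$ remains.

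\textbf{Step 2: proof of \eqref{utOK}.} From the commutation above and \eqref{cuxthetaeps}, $\sqrt\mu\,\partial_x(c(u^{\mu,\varepsilon})u^{\mu,\varepsilon}_t) = \sqrt\mu\,\partial_t\tfrac{S^{\mu,\varepsilon}-R^{\mu,\varepsilon}}{2} - \sqrt\mu\,\tfrac{\ud}{\ud t}\Theta^{\mu,\varepsilon}$. Substituting the drift of $\tfrac{S-R}{2}$ computed in Step~1 and the formula \eqref{psiprime}, then using the same identity $\tilde c'((S^{\mu,\varepsilon})^2-(R^{\mu,\varepsilon})^2)=\tfrac{c'}{2}u^{\mu,\varepsilon}_x(R^{\mu,\varepsilon}+S^{\mu,\varepsilon})+2\tilde c'(R^{\mu,\varepsilon}+S^{\mu,\varepsilon})\Theta^{\mu,\varepsilon}$ once more via \eqref{cuxthetaeps}, the pair $\tfrac12 c(R+S)_x+\tfrac12 c'u_x(R+S)$ combines into $\tfrac12(c(R+S))_x$. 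Writing $\sqrt\mu u^{\mu,\varepsilon}_t = \Xi^{\mu,\varepsilon}+\tfrac{R^{\mu,\varepsilon}+S^{\mu,\varepsilon}}{2}$ on the left-hand side, so that $\sqrt\mu\,\partial_x(c(u^{\mu,\varepsilon})u^{\mu,\varepsilon}_t)=\partial_x(c(u^{\mu,\varepsilon})\Xi^{\mu,\varepsilon})+\tfrac12(c(R^{\mu,\varepsilon}+S^{\mu,\varepsilon}))_x$, the flux term $\tfrac12(c(R+S))_x$ cancels and I am left with
\begin{equation*}
\partial_x\bigl(c(u^{\mu,\varepsilon})\Xi^{\mu,\varepsilon}\bigr) = \zeta^{\mu,\varepsilon}-\bigl(\alpha_\chi^{\mu,\varepsilon}+\beta^{\mu,\varepsilon}\Theta^{\mu,\varepsilon}\bigr) = \zeta^{\mu,\varepsilon}-\bar\zeta^{\mu,\varepsilon},
\end{equation*}
the last identity being immediate from the definitions of $\alpha_\chi^{\mu,\varepsilon}$, $\beta^{\mu,\varepsilon}$ and $\zeta^{\mu,\varepsilon}$. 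Finally, \eqref{udefep} evaluated at $x=0$ forces $u^{\mu,\varepsilon}(t,0)=\uu^{\mu,\varepsilon}(t,0)$, hence $\sqrt\mu u^{\mu,\varepsilon}_t(t,0)=\tfrac{R^{\mu,\varepsilon}+S^{\mu,\varepsilon}}{2}(t,0)$, i.e.\ $\Xi^{\mu,\varepsilon}(t,0)=0$; integrating in $x$ from $0$ to $x$ then yields \eqref{utOK}.

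\textbf{Expected obstacle.} There is no real analytic difficulty here, only careful algebraic bookkeeping. The one point one has to track is that the nonlinear term $\tilde c'((S^{\mu,\varepsilon})^2-(R^{\mu,\varepsilon})^2)$ produced by \eqref{SVWEep} is in both steps balanced exactly against a contribution coming from $c'(u^{\mu,\varepsilon})u^{\mu,\varepsilon}_x(R^{\mu,\varepsilon}+S^{\mu,\varepsilon})$ via \eqref{cuxthetaeps}; this is what permits the flux terms to collapse so that only the correction $\zeta^{\mu,\varepsilon}-\bar\zeta^{\mu,\varepsilon}$ survives in Step~2.
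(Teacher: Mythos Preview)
Your proof is correct and follows essentially the same route as the paper. For \eqref{psiprime} the paper subtracts the already-derived conservative equations (with $h(R)=R$) and integrates, whereas you subtract the raw equations \eqref{Reqep}--\eqref{Seqep} and reconstitute the conservative form via \eqref{cuxthetaeps}; for \eqref{utOK} the paper differentiates the integrated identity $\mathcal{C}(u^{\mu,\varepsilon})=\mathcal{C}(u^{\mu,\varepsilon}(t,0))+\int_0^x[\tfrac{S-R}{2}-\Theta]\,\ud y$ in $t$, while you differentiate its $x$-derivative \eqref{cuxthetaeps} in $t$ and invoke the Schwarz commutation $\partial_t(c(u)u_x)=\partial_x(c(u)u_t)$ --- these are the same computation in integral versus differential form, and both finish by using $\Xi^{\mu,\varepsilon}(t,0)=0$.
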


\begin{proof}[Proof of Proposition~\ref{prop:ut}] We consider the equations \eqref{ReqepthetaItoConservative}-\eqref{SeqepthetaItoConservative} with $h(R)=R$, and subtract the first equation from the second one to obtain
\begin{multline}\label{SRconservative}
\sqrt{\mu} \partial_t(S^{\mu, \varepsilon}-R^{\mu, \varepsilon})-(c(u^{\mu, \varepsilon})(R^{\mu, \varepsilon}+S^{\mu, \varepsilon}))_x\\
=\tilde{c}'(u^{\mu, \varepsilon})\left[(\chi_\varepsilon(R^{\mu, \varepsilon})-\chi_\varepsilon(S^{\mu, \varepsilon}))+2  (R^{\mu, \varepsilon}+S^{\mu, \varepsilon})  \Theta^{\mu, \varepsilon} \right].
\end{multline}
Since $R^{\mu, \varepsilon}, S^{\mu, \varepsilon}$ and $u^{\mu, \varepsilon}$ are $1$-periodic, we obtain \eqref{psiprime}
by integration on $\T$. Then we observe that, taking $x=0$ in \eqref{udefep} gives
\begin{equation}\label{ut0}
u^{\mu, \varepsilon}(t,0)=\int_0^t  \left(\tfrac{R^{\mu, \varepsilon}  +  S^{\mu, \varepsilon}}{2\sqrt{\mu}}  \right) (s,0) \, \ud s  +  u_0^\varepsilon(0),
\end{equation}
so that
\begin{equation}\label{CCu}
\mathcal{C}(u^{\mu, \varepsilon}(t,x))=\mathcal{C}(u^{\mu, \varepsilon}(t,0))+\int_0^x \left[\tfrac{S^{\mu, \varepsilon}-R^{\mu, \varepsilon}}{2}(t,y)-\Theta^{\mu, \varepsilon} (t)\right]\ud y.
\end{equation}
By differentiation in \eqref{ut0} and \eqref{CCu} with respect to $t$, we obtain
\begin{equation*}
c(u^{\mu, \varepsilon})u^{\mu, \varepsilon}_t=c(u^{\mu, \varepsilon}(t,0))\left(\tfrac{R^{\mu, \varepsilon}+S^{\mu, \varepsilon}}{2\sqrt{\mu}}  \right) (t,0)+\int_0^x \left[\tfrac{\partial_t(S^{\mu, \varepsilon}-R^{\mu, \varepsilon})}{2}(t,y)- \frac{\ud \ }{\ud t}\Theta^{\mu, \varepsilon} (t)\right]\ud y.
\end{equation*}
We use \eqref{SRconservative} and \eqref{psiprime} to get
\begin{equation*}
\sqrt{\mu} c(u^{\mu, \varepsilon})u^{\mu, \varepsilon}_t=c(u^{\mu, \varepsilon})\tfrac{R^{\mu, \varepsilon}+S^{\mu, \varepsilon}}{2}+\int_0^x \left[\zeta^{\mu, \varepsilon}(t,y)-\bar{\zeta}^{\mu, \varepsilon}(t)\right]\ud y,
\end{equation*}
where $\zeta^{\mu, \varepsilon}$ and $\bar{\zeta}^{\mu, \varepsilon}$ are defined in \eqref{zeta}. Dividing by $c(u^{\mu, \varepsilon})$ yields \eqref{utOK}.
\end{proof}

\begin{proposition}[Energy estimate]\label{prop:GlobalEnergyep} Let $(R^{\mu, \varepsilon}, S^{\mu, \varepsilon})$ be the solution to \eqref{SVWEep} given by Theorem~\ref{thm:glo-existRSep}. Let
\begin{equation*}
\mathcal{E}^{\mu, \varepsilon}(t)\eqdef \|(R^{\mu, \varepsilon},S^{\mu, \varepsilon})(t)\|_{L^2(\T)}^2  =  \int_\T ((R^{\mu, \varepsilon})^2  +  (S^{\mu, \varepsilon})^2)(t,x) \, \ud x
\end{equation*}
denote the total energy of the system. Then for any $T>0$ and $p \in [1, \infty)$ there exists $C>0$ such that for all $(\varepsilon,\mu) \in (0,1)^2$ we have 
\begin{gather}\label{eq:TotalEnergyep}
\E \left[ \sup_{t \in [0,T]}  \mathcal{E}^{\mu, \varepsilon}(t) +  \int_0^T \| \uu_t^{\mu, \varepsilon}\|_{L^2(\T)}^{2}\, \ud t 
+ \tfrac{1}{\sqrt{\mu}}  \int_0^{T} \int_\T \left[  R^{\mu, \varepsilon}  \chi_\varepsilon(R^{\mu, \varepsilon})  +  S^{\mu, \varepsilon}  \chi_\varepsilon(S^{\mu, \varepsilon})
 \right] \ud x\, \ud t  \right]^p \leqslant \tfrac{C}{\mu^p}.
\end{gather}
Moreover, there exists a constant $\eta_0$, such that for all $\eta \leqslant \eta_0$, we have
\begin{equation}\label{EXPene}
\E\left[\exp\left(\eta\sup_{0 \leqslant s\leqslant T} \mathcal{E}^{\mu, \varepsilon}(s)\right)\right] \leqslant  \mathtt{C}(T,\mu).
\end{equation}
\end{proposition}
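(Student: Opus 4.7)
The proof would start from the pointwise-in-$x$ energy identity \eqref{sumsquare-theta} of Proposition~\ref{prop:energy identity}. Integrating over $\T$ eliminates the conservative term $\partial_x[c(u^{\mu,\varepsilon})((S^{\mu,\varepsilon})^2-(R^{\mu,\varepsilon})^2)]$ by periodicity. Since $\tilde{c}'(u)=c'(u)/(4c(u))\geqslant 0$ by \eqref{coeff-c}, both cut-off-weighted integrals have a favourable sign and stay on the left-hand side, and $\gamma\geqslant \gamma_1>0$ gives the dissipative contribution $4\gamma_1\int_0^\tau\|\uu_t^{\mu,\varepsilon}\|_{L^2(\T)}^2\,\ud t$. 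We obtain, for every stopping time $\tau\leqslant T$,
\begin{equation*}
\mathcal{E}^{\mu,\varepsilon}(\tau) + 4\gamma_1\int_0^\tau\|\uu_t^{\mu,\varepsilon}\|_{L^2(\T)}^2\,\ud t + \tfrac{2}{\sqrt{\mu}}\mathcal{X}^{\mu,\varepsilon}(\tau) \leqslant \mathcal{E}^{\mu,\varepsilon}(0) + \tfrac{2T\|q^\varepsilon\|_{L^1(\T)}}{\mu} + 4\,\mathcal{I}_f(\tau) + 4\, M_\tau,
\end{equation*}
where $\mathcal{X}^{\mu,\varepsilon}$ denotes the non-negative cut-off integral, $\mathcal{I}_f(\tau)=\int_0^\tau\int_\T \uu_t^{\mu,\varepsilon} f(\uu^{\mu,\varepsilon})\,\ud x\,\ud t$, and $M_\tau=\int_0^\tau\int_\T\uu_t^{\mu,\varepsilon}\Phi^\varepsilon\,\ud x\,\ud W$ is a continuous martingale.

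To handle $\mathcal{I}_f$, I would use the Lipschitz bound $|f(\uu)|\leqslant L|\uu|+|f(0)|$ together with Young's inequality, yielding $4|\mathcal{I}_f(\tau)|\leqslant 2\gamma_1\int_0^\tau\|\uu_t^{\mu,\varepsilon}\|_{L^2}^2\,\ud t + C\int_0^\tau(\|\uu^{\mu,\varepsilon}\|_{L^2}^2+1)\,\ud t$. From \eqref{wdefep} one has $\uu^{\mu,\varepsilon}(s) = u_0^\varepsilon + \int_0^s \uu_r^{\mu,\varepsilon}\,\ud r$, so that $\|\uu^{\mu,\varepsilon}(s)\|_{L^2}^2\leqslant 2\|u_0\|_{L^2}^2 + 2s\int_0^s\|\uu_r^{\mu,\varepsilon}\|_{L^2}^2\,\ud r$ and $\int_0^\tau\|\uu^{\mu,\varepsilon}\|_{L^2}^2\,\ud s$ is controlled in terms of $\int_0^\tau\|\uu_t^{\mu,\varepsilon}\|_{L^2}^2\,\ud t$ (plus $T$-dependent constants). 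For the martingale part, the quadratic variation satisfies $\langle M\rangle_T = \int_0^T\sum_k|\langle \uu_t^{\mu,\varepsilon},\sigma_k^\varepsilon\rangle_{L^2}|^2\,\ud t \leqslant q_0\int_0^T\|\uu_t^{\mu,\varepsilon}\|_{L^2}^2\,\ud t$ by \eqref{defq}, so Burkholder--Davis--Gundy yields $\E\sup_{[0,T]}|M_t|^p \leqslant C_p q_0^{p/2}\,\E\bigl(\int_0^T\|\uu_t^{\mu,\varepsilon}\|_{L^2}^2\,\ud t\bigr)^{p/2}$. A further Young inequality of the form $x^{p/2}\leqslant \delta x^p + C_\delta$ absorbs this into the left-hand side. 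A standard Gronwall argument applied to $\E\sup_{[0,t]}\mathcal{E}^{\mu,\varepsilon}(s)^p + \E\bigl(\int_0^t\|\uu_r^{\mu,\varepsilon}\|_{L^2}^2\,\ud r\bigr)^p$ closes \eqref{eq:TotalEnergyep}; the singularity $1/\mu^p$ originates only from the deterministic source $(T\|q^\varepsilon\|_{L^1}/\mu)^p$.

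For the exponential bound \eqref{EXPene}, I would read the integrated identity as a scalar It\^o equation for $\mathcal{E}^{\mu,\varepsilon}$, whose martingale part has $\ud\langle M\rangle_t \leqslant 16 q_0\|\uu_t^{\mu,\varepsilon}\|_{L^2}^2\,\ud t$, and apply It\^o's formula to $e^{\eta\mathcal{E}^{\mu,\varepsilon}}$. The It\^o correction contributes $8\eta^2 q_0\|\uu_t^{\mu,\varepsilon}\|_{L^2}^2 e^{\eta\mathcal{E}^{\mu,\varepsilon}}\,\ud t$ while the friction term in the drift contributes $-4\eta\gamma_1\|\uu_t^{\mu,\varepsilon}\|_{L^2}^2 e^{\eta\mathcal{E}^{\mu,\varepsilon}}\,\ud t$. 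Choosing $\eta_0 \leqslant \gamma_1/(4q_0)$ lets the friction dominate, reducing the expression to $\ud\bigl(e^{\eta\mathcal{E}^{\mu,\varepsilon}}\bigr)\leqslant \mathtt{C}_\mu e^{\eta\mathcal{E}^{\mu,\varepsilon}}\,\ud t + \ud N_t$ for some local martingale $N_t$. Standard localisation by a sequence of stopping times, Fatou's lemma, and Gronwall give $\E e^{\eta\mathcal{E}^{\mu,\varepsilon}(t)}\leqslant \mathtt{C}(T,\mu)$, and the supremum version then follows from Doob's maximal inequality applied to the sub-martingale $e^{\eta\mathcal{E}^{\mu,\varepsilon}}$.

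The main structural point, and what I expect to be the delicate step, is this last balance: the constraint $\eta \leqslant \gamma_1/(4q_0)$ between the friction strength and the noise strength is essential and quantifies exactly how much exponential growth the dissipation can absorb. The pathwise $p$-th moment estimate itself is more routine and relies only on the interplay between BDG, Young and Gronwall, once one has exploited the fundamental observation that the equation \eqref{wdefep} for $\uu^{\mu,\varepsilon}$ closes the $L^2$ bound on $\uu^{\mu,\varepsilon}$ in terms of $\uu_t^{\mu,\varepsilon}$.
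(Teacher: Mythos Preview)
Your treatment of the moments bound \eqref{eq:TotalEnergyep} is correct and essentially matches the paper's: integrate the energy identity, absorb part of the $f$-term into the friction by Young's inequality, bound $\int_0^\tau\|\uu^{\mu,\varepsilon}\|_{L^2}^2\,\ud s$ via \eqref{wdefep}, and close with BDG plus Grönwall.

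For the exponential bound \eqref{EXPene}, however, your approach differs from the paper's and has a gap. After applying It\^o's formula to $e^{\eta\mathcal{E}^{\mu,\varepsilon}}$ and using the friction to absorb the It\^o correction, the drift still contains $4\eta\, e^{\eta\mathcal{E}^{\mu,\varepsilon}}\langle\uu_t^{\mu,\varepsilon},f(\uu^{\mu,\varepsilon})\rangle_{L^2}$. A Young split leaves a term $C\eta\, e^{\eta\mathcal{E}^{\mu,\varepsilon}}\|\uu^{\mu,\varepsilon}\|_{L^2}^2$, and $\|\uu^{\mu,\varepsilon}\|_{L^2}^2$ is neither bounded by a constant nor by $\mathcal{E}^{\mu,\varepsilon}$; the best available control, $\|\uu^{\mu,\varepsilon}(t)\|_{L^2}^2\leqslant C + C_\mu\int_0^t\mathcal{E}^{\mu,\varepsilon}(r)\,\ud r$ (from \eqref{wdefep} together with $\mu\|\uu_t^{\mu,\varepsilon}\|_{L^2}^2\leqslant C\mathcal{E}^{\mu,\varepsilon}$), makes the resulting differential inequality for $\E\, e^{\eta\mathcal{E}^{\mu,\varepsilon}}$ super-linear, and no Grönwall step closes it. Augmenting the Lyapunov function to $\Psi=\mathcal{E}^{\mu,\varepsilon}+K\|\uu^{\mu,\varepsilon}\|_{L^2}^2$ does not help, since the drift then contains $C\Psi\, e^{\eta\Psi}$, which is again super-linear. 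A secondary issue is your appeal to Doob: $e^{\eta\mathcal{E}^{\mu,\varepsilon}}$ is not a sub-martingale (the friction gives $\mathcal{E}^{\mu,\varepsilon}$ a drift that can be very negative), so passing from a fixed-time bound to the supremum in time needs a separate argument.

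The paper avoids both difficulties by reversing the order of operations. It first applies Grönwall \emph{pathwise} to the integrated energy identity, absorbing the $f$-term into a deterministic constant $C_0$, so that $\mu\,\sup_{[0,T]}\mathcal{E}^{\mu,\varepsilon}\leqslant C + C_0\sup_{[0,T]}|\mathcal{M}^{\mu,\varepsilon}|$ holds $\omega$-by-$\omega$. Only then does it exponentiate, controlling $\exp\bigl(\eta C_0\sup_{[0,T]}|\mathcal{M}^{\mu,\varepsilon}|\bigr)$ via the exponential martingale inequality $\Pro\bigl(\sup|M|\geqslant a + b\langle M\rangle\bigr)\leqslant 2e^{-2ab}$ combined with the quadratic-variation bound $\langle\mathcal{M}^{\mu,\varepsilon}\rangle_T\leqslant 8q_0\mu\sup_{[0,T]}\mathcal{E}^{\mu,\varepsilon}$, which closes for $\eta\leqslant(32C_0^2q_0)^{-1}$. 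In particular, the correct smallness threshold for $\eta$ involves the Grönwall constant $C_0$ (hence $L$, $\gamma_1$, $T$), not merely the ratio $\gamma_1/q_0$ that you identify.
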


\begin{remark}[Terminology] Note that \eqref{eq:TotalEnergyep} contains actually various information. First, a bound on the \emph{wave energy} (sum of the kinetic and potential energy)
	\begin{equation}\label{Wave energy ep}
		\E \left[ \sup_{t \in [0,T]}  \mu \mathcal{E}^{\mu, \varepsilon}(t)  \right]^p \leqslant C(T,p),
	\end{equation}
	but also a bound on the \emph{frictional energy}
	\begin{equation}\label{Frictional energy}
		\E \left[ \int_0^T \mu \| \uu_t^{\mu, \varepsilon}\|_{L^2(\T)}^{2}\, \ud t \right]^p \leqslant C(T,p),
	\end{equation}
	and a bound on the truncation terms
	\begin{equation}\label{Truncation Energy}
		\E\left[ \sqrt{\mu} \int_0^{T} \int_\T \left[  R^{\mu, \varepsilon}  \chi_\varepsilon(R^{\mu, \varepsilon})  +  S^{\mu, \varepsilon}  \chi_\varepsilon(S^{\mu, \varepsilon})
		\right] \ud x\, \ud t  \right]^p \leqslant C(T,p).
	\end{equation}
\end{remark}

\begin{proof}[ Proof of Proposition~\ref{prop:GlobalEnergyep}] We will denote by $C$ any constant that may depend on the  parameters $c_1$, $c_2$, $c_3$, $\gamma_1$, $\gamma_2$, $L$ (see \eqref{coeff-c}-\eqref{coeff-gammaf}), as well as $q_0$ (\textit{cf.} \eqref{defq}), $\|u_0\|_{H^1(\T)}$, $\|v_0\|_{L^2(\T)}$ and the final time $T$. The constant $C$ being however independent of $\eps$ and $\mu$. \medskip

\textbf{Proof of the moments bound \eqref{eq:TotalEnergyep}.} We integrate over $\T$ the conservation law for the energy \eqref{sumsquare-theta} to obtain
		\begin{multline} \label{EnergyBalance}
			\mu\, \mathcal{E}^{\mu, \varepsilon}(t)+ 4 \mu \tilde{\Gamma}^{\mu, \varepsilon}(t)
			 + \mathcal{D}_\chi^{\mu, \varepsilon}(t)
			 \\
			= \mu\, \mathcal{E}^{\mu, \varepsilon}(0)
			+2\|q^\varepsilon\|_{L^1(\T)}\,  t+ 4 \mu \int_0^t \left(\int_\T \uu^{\mu, \varepsilon}_t f(\uu^{\mu, \varepsilon})\, \ud x\right) \ud s +	\mathcal{M}^{\mu, \varepsilon}(t),
		\end{multline}
		where
		\begin{equation*}
		\tilde{\Gamma}^{\mu, \varepsilon}(t) \eqdef \int_0^t \int_\T \gamma(u^{\mu,\varepsilon})\left( \uu_t^{\mu, \varepsilon}  \right)^2 \ud x\, \ud s \geqslant \gamma_1 			\Gamma^{\mu, \varepsilon}(t) \eqdef \gamma_1 \int_0^t \left\| \uu_t^{\mu, \varepsilon}  \right\|_{L^2(\T)}^2 \ud s,
		\end{equation*}
		and
		\begin{equation*}
\mathcal{D}_\chi^{\mu, \varepsilon}(t) \eqdef 2 \sqrt{\mu} \int_0^t \left(\int_\T \tilde{c}'(u^{\mu, \varepsilon}) \left[  R^{\mu, \varepsilon}  \chi_\varepsilon(R^{\mu, \varepsilon})  +  S^{\mu, \varepsilon}  \chi_\varepsilon(S^{\mu, \varepsilon})
\right] \ud x\right)  \ud s,
		\end{equation*}
		and  the martingale $\mathcal{M}^{\mu, \varepsilon}(t) $ is given by 
		\begin{equation*}
			\mathcal{M}^{\mu, \varepsilon}(t)  \eqdef 4 \mu \int_0^t \int_\T \sum_{k \geqslant 1 }  \uu_t^{\mu, \varepsilon}(s,x)  \sigma_k^{\mu, \varepsilon}(x)\,  \ud x\,  \ud \beta_k(s).
		\end{equation*}
Next, we use the bounds
\begin{equation*}
	\int_\T \uu^{\mu, \varepsilon}_t f(\uu^{\mu, \varepsilon})\, \ud x
	\leqslant \tfrac{\gamma_1}{2}\left\| \uu_t^{\mu, \varepsilon}  \right\|_{L^2(\T)}^2+ \tfrac{1}{2\gamma_1}\left\| f(\uu^{\mu, \varepsilon}) \right\|_{L^2(\T)}^2,
\end{equation*}		
and (recall that $\Lip(f)\leqslant L$, \textit{cf.} \eqref{coeff-gammaf})
\begin{equation*}
	|f(u)|^2\leqslant 2 f(0)^2+2 L^2|u^2|,
\end{equation*}
to get 
\begin{equation*}
	\int_\T \uu^{\mu, \varepsilon}_t f(\uu^{\mu, \varepsilon})\, \ud x
	\leqslant \tfrac{\gamma_1}{2}\left\| \uu_t^{\mu, \varepsilon}  \right\|_{L^2(\T)}^2+ \tfrac{1}{\gamma_1} f(0)^2+ \tfrac{L^2}{\gamma_1}\left\| \uu^{\mu, \varepsilon} \right\|_{L^2(\T)}^2,
\end{equation*}	
We expand (with the help of \eqref{wdefep}, which gives $\uu^{\mu, \varepsilon}(0)=u^{\varepsilon}(0)$)
\begin{equation*}
	\uu^{\mu, \varepsilon}(t)=u^{\varepsilon}(0)+\int_0^t \uu^{\mu, \varepsilon}_t\, \ud s\,,
\end{equation*}
to get
\begin{equation*}
	 \int_0^t \left\| \uu^{\mu, \varepsilon} \right\|_{L^2(\T)}^2 \ud s
	 \leqslant 2\left\| u^{ \varepsilon}(0) \right\|_{L^2(\T)}^2 t +2\int_0^t \Gamma^{\mu, \varepsilon}(s)\, \ud s,
\end{equation*}
and  
\begin{multline*}
	\mu\, \mathcal{E}^{\mu, \varepsilon}(t)+ 2 \mu\gamma_1 \Gamma^{\mu, \varepsilon}(t)
	+ \mathcal{D}_\chi^{\mu, \varepsilon}(t)
	\\
	\leqslant 
		\mu\, \mathcal{E}^{\mu, \varepsilon}(0)
	+2\left(\|q^\varepsilon\|_{L^1(\T)}+4 L^2 \tfrac{\mu}{\gamma_1}\left\| u^{ \varepsilon}(0) \right\|_{L^2(\T)}^2 + 2 \tfrac{\mu}{\gamma_1}f(0)^2\right) t
	\\
	+  \tfrac{8 \mu L^2}{\gamma_1}  \int_0^t \Gamma^{\mu, \varepsilon}(s) \, \ud s +	\mathcal{M}^{\mu, \varepsilon}(t).
\end{multline*}
Since $\|q^\varepsilon\|_{L^1(\T)}\leqslant C$ and $\left\| u^{ \varepsilon}(0) \right\|_{L^2(\T)}^2\leqslant C$, we obtain, for $t\in[0,T]$,
\begin{equation*}
	\mu\, \mathcal{E}^{\mu, \varepsilon}(t)+ 2 \mu\gamma_1 \Gamma^{\mu, \varepsilon}(t)
	+ \mathcal{D}_\chi^{\mu, \varepsilon}(t)
	\leqslant C+ C \mu\int_0^t \Gamma^{\mu, \varepsilon}(s)\, \ud s +\left|\mathcal{M}^{\mu, \varepsilon}(t)\right|.
\end{equation*}
By the Gr\"onwall Lemma, we deduce, for $0\leqslant t\leqslant T$
\begin{equation*}
	\mu\, \mathcal{E}^{\mu, \varepsilon}(t)+ \mu\Gamma^{\mu, \varepsilon}(t)
	+ \mathcal{D}_\chi^{\mu, \varepsilon}(t)
	\leqslant C +\left|\mathcal{M}^{\mu, \varepsilon}(t)\right|+C\int_0^t e^{Cs}\left|\mathcal{M}^{\mu, \varepsilon}(s)\right|\ud s,
\end{equation*}
and thus
\begin{equation}\label{EnergyIneq5}
	\mu\, \sup_{t \in [0,T]}\mathcal{E}^{\mu, \varepsilon}(t)+ \mu\Gamma^{\mu, \varepsilon}(T)
	+ \mathcal{D}_\chi^{\mu, \varepsilon}(T)
	\leqslant 
	C\left(1+ \sup_{t \in [0,T]} \left|\mathcal{M}^{\mu, \varepsilon}(t)\right|\right).
\end{equation}
Using It\^o's isometry, the Burkholder--Davis--Gundy inequality, and the bound \eqref{defq}, we have, for an exponent $p\geqslant 1$,
\begin{equation*}
	\E\left[ \sup_{t \in [0,T]} [\mathcal{M}^{\mu, \varepsilon}(t)]^{p}\right]  \leqslant  C \mu^{p} \E \left( \int_{0}^{T}  \int_\T (\uu_t^{\mu, \varepsilon})^2 \, \ud x    \, \ud s \right)^{p/2}.
\end{equation*}
By the definition \eqref{wdefep} of $\uu^{\mu, \varepsilon}$, we also have 
\begin{equation*}
	\mu  \left\| \uu_t^{\mu, \varepsilon}  \right\|_{L^2(\T)}^2\leqslant C \mathcal{E}^{\mu, \varepsilon}(t),
\end{equation*}
so 
\begin{align}\label{EsupM2}
	\E\left[ \sup_{t \in [0,T]} [\mathcal{M}^{\mu, \varepsilon}(s)]^{p}\right]  \leqslant  C \E\left[
		\mu
	\sup_{t \in [0,T]} \mathcal{E}^{\mu, \varepsilon}(t)
\right]^{p/2}.
\end{align}
The estimate \eqref{eq:TotalEnergyep} follows from \eqref{EnergyIneq5} and \eqref{EsupM2}.\medskip

\textbf{Proof of the exponential bound \eqref{EXPene}.} Let $\eta\in (0,1]$. 
	Let $C_0$ be the constant in \eqref{EnergyIneq5}. Then, \eqref{EnergyIneq5} gives
	\begin{equation}\label{EXPenergy1}
		\mu \eta\sup_{0 \leqslant t \leqslant T} \mathcal{E}^{\mu, \varepsilon}(t)\leqslant C+\eta C_0
		\sup_{0 \leqslant t \leqslant T}|\mathcal{M}^{\mu, \varepsilon}(t)|.
	\end{equation}
	Let 
	\begin{equation*}
		\dual{\mathcal{M}^{\mu, \varepsilon}}{\mathcal{M}^{\mu, \varepsilon}}(t)  \eqdef  16 \mu^2 \int_0^t \sum_{k\geqslant 1 } \left|\int_\T \sigma_k^\varepsilon \uu_t^{\mu, \varepsilon} \, \ud x\right|^2 \ud s
	\end{equation*} 
	denote the quadratic variation of $\mathcal{M}^{\mu, \varepsilon}$. We use the exponential martingale inequality
	\begin{equation*}
		\Pro\left(\sup_{0 \leqslant t \leqslant  T} |\mathcal{M}^{\mu, \varepsilon}(t)|
		\geqslant \left(a+b\dual{\mathcal{M}^{\mu, \varepsilon}}{\mathcal{M}^{\mu, \varepsilon}}(T)\right)\lambda\right)
		\leqslant 2 \exp\left\{-2a b \lambda^2\right\},
	\end{equation*}
	with $\lambda=1$ to obtain
	\begin{equation}\label{OKZ}
		\E\left[\exp\left(2\eta C_0 Z\right)\right]  \leqslant  C, \qquad Z  \eqdef  \sup_{0 \leqslant t \leqslant  T} |\mathcal{M}^{\mu, \varepsilon}(t)|-b \dual{\mathcal{M}^{\mu, \varepsilon}}{\mathcal{M}^{\mu, \varepsilon}}( T),
	\end{equation}
	if $\eta\leqslant b/(2 C_0)$. 
	By \eqref{EXPenergy1} and the Cauchy--Schwarz inequality, we have then
	\begin{equation*}
		\E\left[\exp\left( \mu \eta\sup_{0 \leqslant t \leqslant  T} \mathcal{E}^{\mu, \varepsilon}(t)\right)\right]
		\leqslant C \left\{\E\left[\exp\left(2\eta C_0 Z\right)\right]\right\}^{1/2}
		\left\{\E\left[\exp\left(2b\eta C_0 \dual{\mathcal{M}^{\mu, \varepsilon}}{\mathcal{M}^{\mu, \varepsilon}}( T)\right)\right]\right\}^{1/2}.
	\end{equation*}
	The bound \eqref{OKZ} therefore gives
	\begin{equation*}
		\E\left[\exp\left(\mu \eta\sup_{0 \leqslant t \leqslant  T} \mathcal{E}^{\mu, \varepsilon}(t)\right)\right]
		\leqslant C
		\left\{\E\left[\exp\left(2b\eta C_0 \dual{\mathcal{M}^{\mu, \varepsilon}}{\mathcal{M}^{\mu, \varepsilon}}( T)\right)\right]\right\}^{1/2}.
	\end{equation*}
	By the Cauchy--Schwarz inequality, we also have 
	\begin{equation*}
		\dual{\mathcal{M}^{\mu, \varepsilon}}{\mathcal{M}^{\mu, \varepsilon}}( T)
		\leqslant 8 q_0 \mu  \mathcal{E}^{\mu, \varepsilon}( T)\leqslant  8 q_0 \mu \sup_{0 \leqslant t\leqslant  T} \mathcal{E}^{\mu, \varepsilon}(t),
	\end{equation*}
	so 
	\begin{equation*}
		\E\left[\exp\left(\mu \eta\sup_{0 \leqslant t \leqslant  T} \mathcal{E}^{\mu, \varepsilon}(t)\right)\right]
		\leqslant C 
		\left\{\E\left[\exp\left(16 b q_0 C_0  \eta \mu \sup_{0 \leqslant t\leqslant  T} \mathcal{E}^{\mu, \varepsilon}(t)\right)\right]\right\}^{1/2}.
	\end{equation*}
	We can choose $b=1/(16 C_0 q_0)$ to conclude to \eqref{EXPene}, under the condition $32 C_0^2 q_0\eta\leqslant 1$. 
\end{proof}

\begin{corollary}[Bound on the correction terms]\label{cor:boundpsi} 
Let 
$(R^{\mu, \varepsilon}(t),S^{\mu, \varepsilon}(t))_{t\geqslant 0}$ be the solution to \eqref{SVWEep}.
The corrective term
$\Theta^{\mu, \varepsilon}$ defined in \eqref{psieps} satisfies the bound
\begin{equation}\label{eq:boundpsi}
\E\left[\|\Theta^{\mu, \varepsilon}\|_{L^\infty([0, T])}^p\right]\leqslant C (p,T,\mu)  \varepsilon^{1/2},
\end{equation}
for all $p\geqslant 1$, while, for $\uu^{\mu, \varepsilon}$ given by \eqref{wdefep}, we have
\begin{equation}\label{uteps0}
\E\left[\left\|u^{\mu, \varepsilon}_t-\uu^{\mu, \varepsilon}_t\right\|^p_{L^1([0, T];L^\infty(\T))}\right]\leqslant  C (T,p,\mu)  \varepsilon^{1/4}.
\end{equation}
and
\begin{equation}\label{uteps01}
\E\left[\left\|u^{\mu, \varepsilon}_t-\uu^{\mu, \varepsilon}_t\right\|^p_{L^\infty([0,T] \times \T)}\right]\leqslant  C (T,p,\mu),
\end{equation}
for all $p\geqslant 1$.
\end{corollary}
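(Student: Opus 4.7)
The strategy is to solve the linear ODE \eqref{psiprime} for $\Theta^{\mu,\varepsilon}$ by the variation-of-constants formula, then bound the two resulting factors separately using the energy estimates of Proposition \ref{prop:GlobalEnergyep}. First I would verify the initial condition: since $\int_\T J_\varepsilon g\, \mathrm dx = \int_\T g\, \mathrm dx$ for any $g$, we have
\begin{equation*}
	\Theta^{\mu,\varepsilon}(0) = \int_0^1 \tfrac{J_\varepsilon S_0^\mu - J_\varepsilon R_0^\mu}{2}(y)\, \mathrm dy = \int_0^1 c(u_0)u_0'\, \mathrm dy = \big[\mathcal C(u_0)\big]_0^1 = 0
\end{equation*}
by periodicity. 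The ODE \eqref{psiprime} together with $\Theta^{\mu,\varepsilon}(0)=0$ yields
\begin{equation*}
	\Theta^{\mu,\varepsilon}(t) = \tfrac{1}{\sqrt\mu}\int_0^t \exp\!\left(\tfrac{1}{\sqrt\mu}\int_s^t \beta^{\mu,\varepsilon}(r)\, \mathrm dr\right)\alpha_\chi^{\mu,\varepsilon}(s)\, \mathrm ds,
\end{equation*}
so that $\|\Theta^{\mu,\varepsilon}\|_{L^\infty([0,T])} \leqslant \tfrac{1}{\sqrt\mu}\exp\!\big(\|\beta^{\mu,\varepsilon}\|_{L^1}/\sqrt\mu\big)\|\alpha_\chi^{\mu,\varepsilon}\|_{L^1([0,T])}$.

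Next I would control the two factors. The key observation for $\alpha_\chi^{\mu,\varepsilon}$ is the trivial bound $\chi_\varepsilon(\xi)\leqslant \varepsilon \xi \chi_\varepsilon(\xi)$ on the support $\{\xi\geqslant 1/\varepsilon\}$, which gives
\begin{equation*}
	|\alpha_\chi^{\mu,\varepsilon}(t)| \leqslant C\varepsilon \int_\T\big(R^{\mu,\varepsilon}\chi_\varepsilon(R^{\mu,\varepsilon}) + S^{\mu,\varepsilon}\chi_\varepsilon(S^{\mu,\varepsilon})\big)\, \mathrm dx,
\end{equation*}
and, by \eqref{Truncation Energy}, $\big(\E\|\alpha_\chi^{\mu,\varepsilon}\|_{L^1([0,T])}^{2p}\big)^{1/(2p)} \leqslant C(T,p)\varepsilon/\sqrt\mu$. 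For the exponential factor, $|\beta^{\mu,\varepsilon}(t)|\leqslant C\|R+S\|_{L^1(\T)}\leqslant C\mathcal E^{\mu,\varepsilon}(t)^{1/2}$, so $\|\beta^{\mu,\varepsilon}\|_{L^1([0,T])}\leqslant CT\big(\sup_{[0,T]}\mathcal E^{\mu,\varepsilon}\big)^{1/2}$. The main delicate point is that \eqref{EXPene} only provides a sub-Gaussian moment for $\mathcal E^{\mu,\varepsilon}$, not for its square root; one circumvents this via Young's inequality $\lambda y^{1/2}\leqslant \eta_0 y + \lambda^2/(4\eta_0)$, which gives
\begin{equation*}
	\E\exp\!\big(2p\|\beta^{\mu,\varepsilon}\|_{L^1}/\sqrt\mu\big) \leqslant \exp\!\big(Cp^2T^2/(\eta_0\mu)\big)\,\E\exp\!\big(\eta_0\sup_{[0,T]}\mathcal E^{\mu,\varepsilon}\big) \leqslant C(T,p,\mu).
\end{equation*}
Applying Cauchy--Schwarz then produces $\E\|\Theta^{\mu,\varepsilon}\|_{L^\infty}^p \leqslant C(T,p,\mu)\varepsilon^p/\mu^p$; since $\varepsilon<1$ and $p\geqslant 1$, we have $\varepsilon^p\leqslant \varepsilon^{1/2}$, establishing \eqref{eq:boundpsi}.

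For \eqref{uteps0} and \eqref{uteps01}, I would use \eqref{utOK}, which yields the pointwise bound $\|\Xi^{\mu,\varepsilon}(t,\cdot)\|_{L^\infty(\T)}\leqslant \tfrac{2}{c_1}\|\zeta^{\mu,\varepsilon}(t,\cdot)\|_{L^1(\T)}$, with
\begin{equation*}
	\|\zeta^{\mu,\varepsilon}(t,\cdot)\|_{L^1(\T)} \leqslant C\!\left(\|\chi_\varepsilon(R^{\mu,\varepsilon})+\chi_\varepsilon(S^{\mu,\varepsilon})\|_{L^1(\T)} + |\Theta^{\mu,\varepsilon}(t)|\,\|R^{\mu,\varepsilon}+S^{\mu,\varepsilon}\|_{L^1(\T)}\right).
\end{equation*}
Dividing by $\sqrt\mu$ (since $u^{\mu,\varepsilon}_t - \uu^{\mu,\varepsilon}_t = \Xi^{\mu,\varepsilon}/\sqrt\mu$) and integrating in time: the first contribution is $O(\varepsilon/\sqrt\mu)$ in $L^p(\Omega)$ by the argument of the previous paragraph, while the second is bounded via Cauchy--Schwarz by $\big(\E\|\Theta^{\mu,\varepsilon}\|_{L^\infty}^{2p}\big)^{1/2}$ times a bound on $\int_0^T\mathcal E^{\mu,\varepsilon,1/2}\, \mathrm dt \leqslant CT(\sup_{[0,T]}\mathcal E^{\mu,\varepsilon})^{1/2}$, which has all moments by \eqref{Wave energy ep}; the first factor is of order $\varepsilon^{1/4}$ by \eqref{eq:boundpsi} applied with exponent $2p$, giving \eqref{uteps0}. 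For \eqref{uteps01}, use $\chi_\varepsilon(\xi)\leqslant \xi^2$ to get $\|\chi_\varepsilon(R)\|_{L^1}\leqslant \|R\|_{L^2}^2\leqslant \mathcal E^{\mu,\varepsilon}$ and $\|R+S\|_{L^1}\leqslant C\mathcal E^{\mu,\varepsilon,1/2}$, so $\sup_{[0,T]}\|\zeta^{\mu,\varepsilon}\|_{L^1}\leqslant C(\sup\mathcal E^{\mu,\varepsilon})(1+\|\Theta^{\mu,\varepsilon}\|_{L^\infty})$, whose $L^p(\Omega)$ norm is finite by \eqref{Wave energy ep} and \eqref{eq:boundpsi}; dividing by $\sqrt\mu$ yields the claim. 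The main obstacle throughout is the Young inequality trick needed in the exponential bound, since the energy moment bound is sub-Gaussian in $\mathcal E^{\mu,\varepsilon}$ but the decay rate $\sqrt\mu$ forces the exponent to depend inversely on $\sqrt\mu$.
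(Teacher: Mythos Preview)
Your proof is correct and follows essentially the same strategy as the paper: integrate the linear ODE \eqref{psiprime} with $\Theta^{\mu,\varepsilon}(0)=0$, then split the resulting product via Cauchy--Schwarz in $\Omega$, controlling the $\alpha_\chi^{\mu,\varepsilon}$ factor by the pointwise inequality $\chi_\varepsilon(\xi)\leqslant\varepsilon\,\xi\chi_\varepsilon(\xi)$ together with the truncation-energy bound, and controlling the exponential factor by the Young-inequality trick $\lambda y^{1/2}\leqslant\eta_0 y+\lambda^2/(4\eta_0)$ combined with \eqref{EXPene}. The treatment of \eqref{uteps0}--\eqref{uteps01} via \eqref{utOK} and the decomposition of $\zeta^{\mu,\varepsilon}$ is likewise the same as in the paper.

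One minor difference: you bound $\|\Theta^{\mu,\varepsilon}\|_{L^\infty}$ directly by $\mu^{-1/2}\exp(\|\beta^{\mu,\varepsilon}\|_{L^1}/\sqrt\mu)\,\|\alpha_\chi^{\mu,\varepsilon}\|_{L^1}$, whereas the paper first applies H\"older in $t$ with exponents $p,p'$ and then estimates $\int_0^T|\alpha_\chi^{\mu,\varepsilon}|^{2p}\,\ud s$ by $\sup|\alpha_\chi^{\mu,\varepsilon}|^{2p-1}\int_0^T|\alpha_\chi^{\mu,\varepsilon}|\,\ud s$. Your route is slightly more direct and in fact yields the stronger power $\varepsilon^p$ (before you throw it away to $\varepsilon^{1/2}$), since raising $\|\alpha_\chi^{\mu,\varepsilon}\|_{L^1}=O(\varepsilon)$ to the $2p$-th power gives $\varepsilon^{2p}$, while the paper only extracts a single $\varepsilon$ from the $L^1$-integral. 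This is inessential for the stated corollary, but worth noting.
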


\begin{proof}[Proof of Corollary~\ref{cor:boundpsi}] We assume $2\leqslant p$. By integration in \eqref{psiprime}, we obtain
\begin{equation*}
\Theta^{\mu, \varepsilon}(t)  =  \int_0^t \tfrac{\alpha^{\mu, \varepsilon}_\chi(s)}{\sqrt{\mu}} \exp\left\{\int_s^t \tfrac{\beta^{\mu, \varepsilon}(\sigma)}{\sqrt{\mu}}\, \ud\sigma \right\}  \ud s,
\end{equation*}
which gives
\begin{equation*}
\|\Theta^{\mu, \varepsilon}\|_{L^\infty([0, T])}^p  \leqslant  \int_0^{T} \left|\tfrac{\alpha^{\mu, \varepsilon}_\chi(s)}{\sqrt{\mu}}\right|^p  \ud s  \left(\int_0^{ T} \exp\left\{p' \int_s^t \left| \tfrac{\beta^{\mu, \varepsilon}(\sigma)}{\sqrt{\mu}}\right| \ud\sigma \right\}  \ud s\right)^{p/p'},
\end{equation*}
(where $p'$ is the conjugate exponent to $p$), and then
\begin{multline}\label{psieps1}
\E\left[\|\Theta^{\mu, \varepsilon}\|_{L^\infty([0,T])}^p\right]\\ 
\leqslant 
\left\{\E\left[\int_0^{ T} \left|\tfrac{\alpha^{\mu, \varepsilon}_\chi(s)}{\sqrt{\mu}}\right|^{2p} \ud s\right]\right\}^{1/2}
\left\{\E\left[\left(\int_0^{ T} \exp\left\{p'\int_s^{ T}\left| \tfrac{\beta^{\mu, \varepsilon}(\sigma)}{\sqrt{\mu}}\right| \ud\sigma\right\} \ud s\right)^{(2p)/p'}\right]\right\}^{1/2}.
\end{multline}
Using  the estimate 
\begin{equation}\label{truncation gets eps}
	\chi_\eps(R)\leqslant R\chi_\eps(R)\eps,
\end{equation}
we obtain 
\begin{multline}\label{chiep}
\int_0^T \int_\T \tilde{c}'(u^{\mu, \varepsilon})\left(\chi_\varepsilon(R^{\mu, \varepsilon})  + \chi_\varepsilon(S^{\mu, \varepsilon})  \right) \ud x \, \ud t \\\leqslant \varepsilon \int_0^{T} \int_\T \tilde{c}'(u^{\mu, \varepsilon}) \left[  R^{\mu, \varepsilon}  \chi_\varepsilon(R^{\mu, \varepsilon})  +  S^{\mu, \varepsilon}  \chi_\varepsilon(S^{\mu, \varepsilon})
 \right] \ud x\, \ud t,
\end{multline}
which implies with the bound on $\mathcal{D}_\chi^{\mu, \varepsilon}(t)$ in \eqref{EnergyIneq5} that 
\begin{equation*}
	\int_0^{T}|\alpha^{\mu, \varepsilon}_\chi(s)| \, \ud s \leqslant C (T,\mu)\left[1+\sup_{t \in [0,T]}\left|\mathcal{M}^{\mu, \varepsilon}(s)\right| \right].
\end{equation*}
Using now $\chi_\varepsilon(R) \leqslant R^2$ with the bound on $	\mathcal{E}^{\mu, \varepsilon}(t)$ in \eqref{EnergyIneq5}, we get
\begin{equation*}
 \sup_{s \in [0,T]} |\alpha^{\mu, \varepsilon}_\chi(s)|^{2p-1}  
\leqslant  C (T,p,\mu)\left[1+\sup_{t \in [0,T]}\left|\mathcal{M}^{\mu, \varepsilon}(s)\right|^{2p-1} \right].
\end{equation*}
All in all, we have
\begin{equation*}
\int_0^{T}|\alpha^{\mu, \varepsilon}_\chi(s)|^{2p} \, \ud s \leqslant  \sup_{s \in [0,T]} |\alpha^{\mu, \varepsilon}_\chi(s)|^{2p-1}  \int_0^{T}|\alpha^{\mu, \varepsilon}_\chi(s)|  \, \ud s
\leqslant  C (T,p,\mu)\left[1+\sup_{t \in [0,T]}\left|\mathcal{M}^{\mu, \varepsilon}(s)\right|^{2p} \right] \eps.
\end{equation*}
By \eqref{EsupM2} and \eqref{eq:TotalEnergyep} we deduce
\begin{equation}\label{alphaeps3}
\E \int_0^{T}|\alpha^{\mu, \varepsilon}_\chi(s)|^{2p} \, \ud s
\leqslant  C (T,p,\mu) \eps.
\end{equation}
To estimate the remaining term in \eqref{psieps1}, we first note that 
\begin{equation*}
\E\left[\left(\int_0^{ T} \exp\left\{p'\int_s^{ T}\left| \tfrac{\beta^{\mu, \varepsilon}(\sigma)}{\sqrt{\mu}}\right| \ud\sigma\right\} \ud s\right)^{(2p)/p'}\right]
\leqslant  C (p,T) \E \exp\left( C (p,T,\mu) \sup_{0\leqslant t\leqslant  T}\mathcal{E}(t)^{1/2}\right),
\end{equation*}
and
\begin{equation*}
 C (p,T) \sup_{0\leqslant t\leqslant  T}\mathcal{E}(t)^{1/2}
\leqslant  C (p,T,\mu)+ \eta_0 \sup_{0\leqslant t \leqslant T}\mathcal{E}(t).
\end{equation*}
Using the exponential bound \eqref{EXPene} yields
\begin{equation*}
\E\left[\left(\int_0^{ T} \exp\left\{p'\int_s^{ T}\left| \tfrac{\beta^{\mu, \varepsilon}(\sigma)}{\sqrt{\mu}}\right| \ud\sigma\right\} \ud s\right)^{(2p)/p'}\right]
\leqslant  C (p,T).
\end{equation*}
Summing up with \eqref{psieps1} and \eqref{alphaeps3} we obtain \eqref{eq:boundpsi}.
The estimates \eqref{uteps0} and \eqref{uteps01} are a consequence of \eqref{utOK}, \eqref{truncation gets eps} and the bounds
\begin{equation*}
\E\left[\left\|\zeta^\eps\right\|^p_{L^1([0, T] \times \T)}\right]\leqslant C (T,p,\mu)  \eps^{1/4},
\end{equation*}
and
\begin{equation*}
\E\left[\left\|\zeta^\eps\right\|^p_{L^\infty([0, T];L^1(\T))}\right]\leqslant C (T,p,\mu),
\end{equation*}
where $\zeta^{\mu, \varepsilon}$ is defined in \eqref{zeta}.
This follows easily from the bounds \eqref{eq:TotalEnergyep} and \eqref{eq:boundpsi}.
\end{proof}

\section{Uniform estimates}\label{sec:ue}

As announced at the beginning of Section~\ref{sec:energy}, we will now exploit the gain due to the friction term (and exploit also the bounds already established), to derive some estimates that are uniform with respect to $\mu$. 

\subsection{Parabolic estimates}\label{sec:ue-C0L2-L2H1}
 
Here, by parabolic estimates, we mean estimates in the ``energy space'' 
	\begin{equation*}
		C([0,T];L^2(\T))\cap L^2([0,T];H^1(\T)),
	\end{equation*}
	which are standard for parabolic equations. A bound (independent of $\mu$) in such an energy space is quite expected, as the limit equation is a parabolic equation. To establish this bound, we need to take into account the non-linear sound speed $u\mapsto c(u)$, which is done via the following result.

\begin{lem}\label{Pro:Hdef} 
		There exists a function $H$ and $C>0$ depending on the parameters $c_1$, $c_2$, $c_3$, $\gamma_1$, $\gamma_2$, $\kappa$, $\bar{u}$ in \eqref{coeff-c}-\eqref{c'g}  only, such that if  $h(u) = H'(u)/\gamma(u)$, we have 
\begin{gather*}
C^{-1} \leqslant h'(u) \leqslant C, \qquad
C^{-1} \leqslant (hc)'(u),\label{goodh}\\
H(u) \leqslant C (u^2 +1),\qquad
u^2 \leqslant C  (H(u) +1),\label{h and H ok}\\
h(u)^2 \leqslant C (u^2 +1), \qquad
u^2 \leqslant  C(h(u)^2 +1).\label{h and H ok 2}
\end{gather*}
\end{lem}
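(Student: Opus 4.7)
The plan is to construct $h$ explicitly by taking $h(u)\eqdef u/c(u)$ on $(-\infty,0]$ and extending it linearly by $h(u)\eqdef u/c(0)$ on $(0,\infty)$; $H$ is then defined by $H(u)\eqdef\int_0^u\gamma(v)h(v)\,\ud v$. The two pieces agree at $u=0$ with common value $h(0)=0$ and common slope $h'(0)=1/c(0)$, so $h\in C^1(\R)$ and $H\in C^2(\R)$. The point of this choice is that $(hc)(u)=u$ identically on $(-\infty,0]$, so the crucial inequality $(hc)'(u)\geqslant C^{-1}$ — which is the main obstacle if one tries the more natural $h(u)=\int_{\bar u}^u\ud v/c(v)$, because then $1+c'(u)h(u)$ may dip below any chosen positive threshold on an intermediate interval — becomes trivially true, equal to $1$, on the negative half-line.

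For the easy verifications: on $u\geqslant 0$, $h'(u)=1/c(0)$ is a positive constant and $(hc)'(u)=c(u)/c(0)+uc'(u)/c(0)\geqslant c_1/c_2$. On $u\leqslant 0$ we have $(hc)'(u)\equiv 1$, while
\begin{equation*}
h'(u)\;=\;\frac{c(u)-uc'(u)}{c(u)^2}\;\geqslant\;\frac{c(u)}{c(u)^2}\;\geqslant\;\frac{1}{c_2},
\end{equation*}
since $-uc'(u)\geqslant 0$. The growth estimates $h(u)^2\leqslant C(u^2+1)$ and $u^2\leqslant C(h(u)^2+1)$ follow from the two-sided bound $|h(u)|\in[|u|/c_2,|u|/c_1]$ valid on both sides of $0$; the analogous bounds on $H$ follow because $\gamma(v)h(v)$ has the same sign as $v$, so $H\geqslant 0$ and $H(u)\in[\gamma_1 u^2/(2c_2),\gamma_2 u^2/(2c_1)]$ on all of $\R$.

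The only non-trivial step, and where assumption \eqref{c'g} must be invoked, is the upper bound on $h'(u)$ for $u\leqslant 0$. This reduces to showing that $|u|\,c'(u)$ is uniformly bounded on $(-\infty,0]$. I would fix $\delta\in(0,1+\kappa)$ and choose $M>0$ such that $c'(u)\int_{\bar u}^u\ud v/c(v)\geqslant -1+\delta$ for all $u\leqslant -M$. Using the elementary sandwich $\int_{\bar u}^u\ud v/c(v)\leqslant(u-\bar u)/c_2\leqslant 0$ valid for $u\leqslant\bar u$ together with the non-negativity of $c'$, one obtains $c'(u)(u-\bar u)/c_2\geqslant -1+\delta$, hence $c'(u)|u-\bar u|\leqslant(1-\delta)c_2$ for $u\leqslant -M$; adding $c'(u)|\bar u|\leqslant|\bar u|c_3$ yields a bound on $|u|\,c'(u)$ in this range that depends only on the parameters. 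On the compact interval $[-M,0]$ the trivial estimate $|u|\,c'(u)\leqslant Mc_3$ finishes the job, and assembling the constants produces the single $C$ of the statement.
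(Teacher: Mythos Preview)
Your construction is correct and genuinely different from the paper's. The paper takes $h(u)=\int_{u^*}^u\tfrac{\ud v}{c(v)}$ globally, with $u^*$ obtained by shifting $g(u)=\int_{\bar u}^u\tfrac{\ud v}{c(v)}$ by the constant $K=\sup_{[\underline u,\bar u]}|g|$; this makes $h'=1/c$ trivially two-sided bounded, and the work goes into the lower bound $(hc)'(u)=1+c'(u)h(u)\geqslant(1+\kappa)/2$, which is where \eqref{c'g} and the shift by $K$ enter. You reverse the division of labour: your piecewise choice forces $(hc)'\equiv 1$ on $(-\infty,0]$ and $(hc)'\geqslant c_1/c_2$ on $[0,\infty)$ by inspection, and the non-trivial step becomes the \emph{upper} bound on $h'(u)=\bigl(c(u)+|u|c'(u)\bigr)/c(u)^2$ for $u\leqslant 0$, which you correctly reduce to a uniform bound on $|u|\,c'(u)$ via \eqref{c'g}. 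The paper's $h$ is given by a single formula and is smooth; yours is only $C^1$ at the junction, but that suffices for $H\in C^2$. Your route makes the role of \eqref{c'g} arguably more transparent---it is exactly the statement that $|u|\,c'(u)$ stays bounded as $u\to-\infty$---while the paper's shift trick is perhaps more elegant in that it avoids any piecewise definition.
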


\begin{proof}[Proof of Lemma~\ref{Pro:Hdef}] If $c$ is a constant, then one can simply take $h(u)=u/c$ and $H(u) = \int_0^u v \gamma(v)/c\, \ud v$. In the general case, with regard to \eqref{c'g}, we define
\begin{equation*}
g(u) \eqdef \int_{\bar{u}}^u \frac{\ud v}{c(v)}.
\end{equation*}
We deduce then the existence of $\underline{u} \in (-\infty,\bar{u}]$, such that for any $u \leqslant \underline{u}$, we have $c'(u) g(u) \geqslant (\kappa-1)/2$. Now, we can define $h$ as
\begin{equation}\label{hdef}
h(u) \eqdef g(u) + \sup_{v \in [\underline{u},\bar{u}]} |g(v)|.
\end{equation}
Let then $u^\ast \eqdef h^{-1}(0)$. We can rewrite $h$ and define $H$ as follows:
\begin{equation*}
h(u) = \int_{u^\ast}^u \frac{\ud v}{c(v)}, \qquad H(u) \eqdef \int_{u^\ast}^u h(v) \gamma(v)\, \ud v.
\end{equation*}
Using the definition \eqref{hdef}, and considering the three cases $u \in(-\infty,\underline{u}]$, $u \in [\underline{u},\bar{u}]$ and $u\in [\bar{u},\infty)$, we can check that  
\begin{equation*}
0<\tfrac{1 +\kappa}{2} \leqslant (hc)'(u).
\end{equation*} 
The other inequalities follow in a straighforward way.
\end{proof}

\begin{pro}[Parabolic energy]\label{prop:friction E} 
Let $h$ and $H$ be defined in Lemma \ref{Pro:Hdef}. We have then
\begin{gather}\nonumber
 \ud \left[\mu \uu^{\mu, \varepsilon}_t h(u^{\mu, \varepsilon})+H(u^{\mu, \varepsilon})\right] + \left[ c(u^{\mu, \varepsilon}) h(u^{\mu, \varepsilon}) [\tfrac{R^{\mu, \varepsilon}-S^{\mu, \varepsilon}}{2}] \right]_x \ud t   + (ch)'(u^{\mu, \varepsilon}) c(u^{\mu, \varepsilon})\left(u^{\mu, \varepsilon}_x\right)^2\, \ud t \\
  \label{eqh}
 = \mu h'(u^{\mu, \varepsilon}) \left(\uu_t^{\mu, \varepsilon}\right)^2  \ud t  + \eta_h^{\mu, \varepsilon}\, \ud t \
+ h(u^{\mu, \varepsilon}) f(u^{\mu, \varepsilon})\, \ud t  + h(u^{\mu, \varepsilon}) \Phi^{\varepsilon}   \ud W,
\end{gather}
where the remainder term $\eta_h^{\mu, \varepsilon}$ satisfies the estimate
\begin{equation}\label{Error}
\E \left| \int_0^T \int_\T \left|\eta_h^{\mu, \varepsilon}\right| \ud x\, \ud t \right|^p \leqslant C(p,T,\mu) \varepsilon^{\frac{1}{8}},
\end{equation}
for any $p \in [1,\infty)$.
\end{pro}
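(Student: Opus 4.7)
\smallskip\noindent\textbf{Plan of proof.} My plan is to combine an It\^o computation for $\mu\uu^{\mu,\varepsilon}_t\,h(u^{\mu,\varepsilon})$ with a pathwise chain rule for $H(u^{\mu,\varepsilon})$, and then rearrange into the desired conservative form. The crucial preliminary observation is that, although $R^{\mu,\varepsilon}$ and $S^{\mu,\varepsilon}$ are semimartingales with diffusion $\Phi^\varepsilon\,\ud W$, the noise appears with the \emph{same} sign in both equations of \eqref{SVWEep} and thus cancels in $S^{\mu,\varepsilon}-R^{\mu,\varepsilon}$; together with the $x$-independence of $\Theta^{\mu,\varepsilon}$, this makes both $u^{\mu,\varepsilon}$ (via \eqref{udefep}) and $\uu^{\mu,\varepsilon}$ (via \eqref{wdefep}) pathwise absolutely continuous in $t$, with no $\ud W$-component. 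Consequently, the It\^o product rule for $\mu\uu^{\mu,\varepsilon}_t\cdot h(u^{\mu,\varepsilon})$ produces no quadratic-covariation cross term, and $\ud H(u^{\mu,\varepsilon})=h(u^{\mu,\varepsilon})\gamma(u^{\mu,\varepsilon})u^{\mu,\varepsilon}_t\,\ud t$ is a classical differential.

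Summing \eqref{Reqep}$+$\eqref{Seqep}, dividing by $2$ and using \eqref{cuxthetaeps} to rewrite $\tfrac{1}{2}c(u^{\mu,\varepsilon})(S^{\mu,\varepsilon}-R^{\mu,\varepsilon})_x=c(u^{\mu,\varepsilon})(c(u^{\mu,\varepsilon})u^{\mu,\varepsilon}_x)_x$, I obtain
\begin{equation*}
\ud(\mu\uu^{\mu,\varepsilon}_t)=c(u^{\mu,\varepsilon})(c(u^{\mu,\varepsilon})u^{\mu,\varepsilon}_x)_x\,\ud t-\gamma(u^{\mu,\varepsilon})\uu^{\mu,\varepsilon}_t\,\ud t+\tilde{c}'(u^{\mu,\varepsilon})\!\left[-\tfrac{\chi_\varepsilon(R^{\mu,\varepsilon})+\chi_\varepsilon(S^{\mu,\varepsilon})}{2}+(R^{\mu,\varepsilon}-S^{\mu,\varepsilon})\Theta^{\mu,\varepsilon}\right]\!\ud t+f(\uu^{\mu,\varepsilon})\,\ud t+\Phi^\varepsilon\,\ud W.
\end{equation*}
The product rule together with $u^{\mu,\varepsilon}_t=\uu^{\mu,\varepsilon}_t+\Xi^{\mu,\varepsilon}/\sqrt{\mu}$ from \eqref{xiut} makes the cross term $\mu\uu^{\mu,\varepsilon}_t h'(u^{\mu,\varepsilon})u^{\mu,\varepsilon}_t\,\ud t$ split into the target contribution $\mu h'(u^{\mu,\varepsilon})(\uu^{\mu,\varepsilon}_t)^2\,\ud t$ plus a remainder $\sqrt{\mu}\,h'\uu^{\mu,\varepsilon}_t\Xi^{\mu,\varepsilon}\,\ud t$; likewise $\ud H(u^{\mu,\varepsilon})=h\gamma\uu^{\mu,\varepsilon}_t\,\ud t+h\gamma\Xi^{\mu,\varepsilon}/\sqrt{\mu}\,\ud t$, whose first piece cancels the $-h\gamma\uu^{\mu,\varepsilon}_t\,\ud t$ coming from $h\cdot\ud(\mu\uu^{\mu,\varepsilon}_t)$. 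Using $\tfrac{R^{\mu,\varepsilon}-S^{\mu,\varepsilon}}{2}=-(c(u^{\mu,\varepsilon})u^{\mu,\varepsilon}_x+\Theta^{\mu,\varepsilon})$, one obtains the algebraic identity
\begin{equation*}
\bigl[c(u^{\mu,\varepsilon})h(u^{\mu,\varepsilon})\tfrac{R^{\mu,\varepsilon}-S^{\mu,\varepsilon}}{2}\bigr]_x+(ch)'(u^{\mu,\varepsilon})c(u^{\mu,\varepsilon})(u^{\mu,\varepsilon}_x)^2=-h(u^{\mu,\varepsilon})c(u^{\mu,\varepsilon})(c(u^{\mu,\varepsilon})u^{\mu,\varepsilon}_x)_x-(ch)'(u^{\mu,\varepsilon})u^{\mu,\varepsilon}_x\Theta^{\mu,\varepsilon},
\end{equation*}
which moves the $hc(cu_x)_x$ term into the conservative flux and dissipation on the left of \eqref{eqh} at the cost of a remainder $-(ch)'u^{\mu,\varepsilon}_x\Theta^{\mu,\varepsilon}$. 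A final swap $f(\uu^{\mu,\varepsilon})\to f(u^{\mu,\varepsilon})$ produces $h(f(\uu^{\mu,\varepsilon})-f(u^{\mu,\varepsilon}))$, so collecting all leftovers:
\begin{equation*}
\eta_h^{\mu,\varepsilon}=h\tilde{c}'\!\left[-\tfrac{\chi_\varepsilon(R^{\mu,\varepsilon})+\chi_\varepsilon(S^{\mu,\varepsilon})}{2}+(R^{\mu,\varepsilon}-S^{\mu,\varepsilon})\Theta^{\mu,\varepsilon}\right]+h\bigl(f(\uu^{\mu,\varepsilon})-f(u^{\mu,\varepsilon})\bigr)+\sqrt{\mu}\,h'\uu^{\mu,\varepsilon}_t\Xi^{\mu,\varepsilon}+\tfrac{h\gamma\Xi^{\mu,\varepsilon}}{\sqrt{\mu}}-(ch)'u^{\mu,\varepsilon}_x\Theta^{\mu,\varepsilon}.
\end{equation*}

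The bound \eqref{Error} then follows by estimating each of these five pieces in turn. The truncation terms give $O(\varepsilon)$ in $p$-th moment via $\chi_\varepsilon(\xi)\leqslant\varepsilon\,\xi\chi_\varepsilon(\xi)$ combined with \eqref{Truncation Energy}. The $\Theta^{\mu,\varepsilon}$-pieces $(R-S)\Theta$ and $u^{\mu,\varepsilon}_x\Theta$ (the latter controlled through \eqref{cuxthetaeps}) give $O(\varepsilon^{1/2})$ by combining \eqref{eq:boundpsi} with the energy bound \eqref{eq:TotalEnergyep}. The piece $h\gamma\Xi^{\mu,\varepsilon}/\sqrt{\mu}=h\gamma(u^{\mu,\varepsilon}_t-\uu^{\mu,\varepsilon}_t)$ is $O(\varepsilon^{1/4})$ directly from \eqref{uteps0}; and writing $u^{\mu,\varepsilon}-\uu^{\mu,\varepsilon}=\int_0^t(u^{\mu,\varepsilon}_s-\uu^{\mu,\varepsilon}_s)\,\ud s$ (the initial values coincide because $u_0$ is periodic) and using $\Lip(f)\leqslant L$ shows $h(f(\uu^{\mu,\varepsilon})-f(u^{\mu,\varepsilon}))$ is also $O(\varepsilon^{1/4})$. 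The main obstacle — and the term that sets the final rate $\varepsilon^{1/8}$ — is the It\^o-correction-type term $\sqrt{\mu}\,h'\uu^{\mu,\varepsilon}_t\Xi^{\mu,\varepsilon}=\mu h'\uu^{\mu,\varepsilon}_t(u^{\mu,\varepsilon}_t-\uu^{\mu,\varepsilon}_t)$: I would control it via Cauchy--Schwarz as $\mu\|h'\|_\infty\|\uu^{\mu,\varepsilon}_t\|_{L^\infty_t L^1_x}\|u^{\mu,\varepsilon}_t-\uu^{\mu,\varepsilon}_t\|_{L^1_t L^\infty_x}$, exploiting the singular bound $\|\uu^{\mu,\varepsilon}_t\|_{L^1_x}\lesssim(\mathcal{E}^{\mu,\varepsilon})^{1/2}/\sqrt{\mu}$ with \eqref{eq:TotalEnergyep} for the first factor, and \eqref{uteps0} for the second. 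The powers of $\mu$ cancel exactly and the Cauchy--Schwarz takes the square root of the $\varepsilon^{1/4}$-rate from \eqref{uteps0}, yielding precisely $\varepsilon^{1/8}$; no further gain seems accessible from the bounds at hand, which explains why $\tfrac{1}{8}$ appears in \eqref{Error}.
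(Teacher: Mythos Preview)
Your derivation of the identity \eqref{eqh} and your expression for $\eta_h^{\mu,\varepsilon}$ are correct and coincide with the paper's argument (up to an immaterial sign in the $(ch)'u^{\mu,\varepsilon}_x\Theta^{\mu,\varepsilon}$ term, where your sign is in fact the right one). The mechanism you describe---product rule with no It\^o cross term because $u^{\mu,\varepsilon}$ carries no martingale part, followed by the algebraic rewriting into flux/dissipation form---is exactly what the paper does.

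There is, however, a genuine gap in your treatment of the bound \eqref{Error}. You implicitly treat $h(u^{\mu,\varepsilon})$ and $(ch)'(u^{\mu,\varepsilon})$ as bounded quantities when you estimate the truncation piece, the $\Theta^{\mu,\varepsilon}$-pieces and the $f$-piece. But by Lemma~\ref{Pro:Hdef}, $h$ has \emph{linear} growth (and so does $(ch)'=c'h+ch'$, since only $ch'$ is bounded), so e.g.\ the step ``$h\tilde c'\chi_\varepsilon(R)$ is $O(\varepsilon)$ via $\chi_\varepsilon(\xi)\leqslant\varepsilon\,\xi\chi_\varepsilon(\xi)$ and \eqref{Truncation Energy}'' does not go through without a moment bound on $\|h(u^{\mu,\varepsilon})\|_{L^\infty_{t,x}}$, i.e.\ on $\|u^{\mu,\varepsilon}\|_{L^\infty_{t,x}}$. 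The paper inserts precisely this missing ingredient: it first derives
\[
\E\,\|u^{\mu,\varepsilon}\|^p_{L^\infty([0,T)\times\T)}\leqslant\E\,\|u^{\mu,\varepsilon}\|^p_{L^\infty([0,T);H^1(\T))}\leqslant C(p,T,\mu),
\]
from $\|u^{\mu,\varepsilon}(t)\|_{L^2}\leqslant\|u_0^\varepsilon\|_{L^2}+\int_0^T\|u^{\mu,\varepsilon}_t\|_{L^2}\,\ud s$ together with the frictional-energy bound \eqref{Frictional energy} and the corrector bounds \eqref{eq:boundpsi}, \eqref{uteps01}. Once this $L^\infty$ control is in hand, your term-by-term estimates (H\"older in $\omega$ to split off the $h$-factor, then the bounds you cite) are correct and yield the stated rate $\varepsilon^{1/8}$, set by the $\sqrt{\mu}\,h'\uu^{\mu,\varepsilon}_t\Xi^{\mu,\varepsilon}$ term exactly as you explain.
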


\begin{proof}[Proof of Proposition~\ref{prop:friction E}] Adding \eqref{Reqep} to \eqref{Seqep} gives the following evolution equation for $\uu_t^{\mu, \varepsilon}$:
\begin{gather*}
\mu\, \ud \uu_t^{\mu, \varepsilon} + c(u^{\mu, \varepsilon}) [\tfrac{R^{\mu, \varepsilon}-S^{\mu, \varepsilon}}{2}]_x\, \ud t + \gamma(u^{\mu, \varepsilon}) \uu^{\mu, \varepsilon}_t\, \ud t\\
= - \half \tilde{c}'(u^{\mu, \varepsilon}) \left[   \chi_\varepsilon(R^{\mu, \varepsilon}) +\chi_\varepsilon(S^{\mu, \varepsilon})\right] \ud t + \tilde{c}'(u^{\mu, \varepsilon}) \Theta^{\mu, \varepsilon} \left[  R^{\mu, \varepsilon}-S^{\mu, \varepsilon}   \right] \ud t + f(\uu^{\mu, \varepsilon})\, \ud t +  \Phi^{\varepsilon}   \ud W.
\end{gather*}
Multiplying by $h(u^{\mu, \varepsilon})$, we obtain
\begin{gather*}
\mu\, \ud \left[\uu^{\mu, \varepsilon}_t h(u^{\mu, \varepsilon})\right] -\mu h'(u^{\mu, \varepsilon}) \uu_t^{\mu, \varepsilon}  u_t^{\mu, \varepsilon}\, \ud t  +  \left[ c(u^{\mu, \varepsilon}) h(u^{\mu, \varepsilon}) [\tfrac{R^{\mu, \varepsilon}-S^{\mu, \varepsilon}}{2}] \right]_x \ud t \\
+ (ch)'(u^{\mu, \varepsilon}) u^{\mu, \varepsilon}_x \tfrac{S^{\mu, \varepsilon}-R^{\mu, \varepsilon}}{2}\, \ud t +h(u^{\mu, \varepsilon}) \gamma(u^{\mu, \varepsilon}) \uu^{\mu, \varepsilon}_t \ud t = - \half h(u^{\mu, \varepsilon}) \tilde{c}'(u^{\mu, \varepsilon}) \left[   \chi_\varepsilon(R^{\mu, \varepsilon}) +\chi_\varepsilon(S^{\mu, \varepsilon})\right] \ud t\\
 + h(u^{\mu, \varepsilon}) \tilde{c}'(u^{\mu, \varepsilon}) \Theta^{\mu, \varepsilon} \left[  R^{\mu, \varepsilon}-S^{\mu, \varepsilon}   \right] \ud t + h(u^{\mu, \varepsilon}) f(\uu^{\mu, \varepsilon})\, \ud t + h(u^{\mu, \varepsilon}) \Phi^{\varepsilon}   \ud W.
\end{gather*}
We express $R^{\mu, \varepsilon}-S^{\mu, \varepsilon}$ in function of $u^{\mu, \varepsilon}_x$ and $\uu^{\mu, \varepsilon}_t$ in function of $u^{\mu, \varepsilon}_t$ via the identities  \eqref{cuxthetaeps} and \eqref{xiut}. This introduces some corrector terms with factor $\Theta^{\mu, \varepsilon}$ and $\Xi^{\mu, \varepsilon}$ and gives \eqref{eqh} with
\begin{multline*}
	\eta_h^{\mu, \varepsilon}
	=
	\sqrt{\mu} h'(u^{\mu, \varepsilon}) \uu_t^{\mu, \varepsilon}  \Xi^{\mu, \varepsilon}\,
	+(ch)'(u^{\mu, \varepsilon}) u^{\mu, \varepsilon}_x \Theta^{\mu, \varepsilon}\,
	\\
	+\mu^{-1/2} h(u^{\mu, \varepsilon}) \gamma(u^{\mu, \varepsilon}) \Xi^{\mu, \varepsilon}
	 -\half h(u^{\mu, \varepsilon}) \tilde{c}'(u^{\mu, \varepsilon}) \left[   \chi_\varepsilon(R^{\mu, \varepsilon}) +\chi_\varepsilon(S^{\mu, \varepsilon})\right]
	 \\
	   + h(u^{\mu, \varepsilon}) \tilde{c}'(u^{\mu, \varepsilon}) \Theta^{\mu, \varepsilon} \left[  R^{\mu, \varepsilon}-S^{\mu, \varepsilon}   \right] 
	   + h(u^{\mu, \varepsilon}) \left[f(\uu^{\mu, \varepsilon})-f(u^{\mu, \varepsilon})\right].
\end{multline*}
To bound the remainder term $\eta_h^{\mu, \varepsilon}$, we proceed as follows. First, by \eqref{cuxthetaeps} and  \eqref{xiut} we have the following control on the derivatives of $u^{\mu, \varepsilon}$:
\begin{equation*}
c(u^{\mu, \varepsilon})^2 \left(u^{\mu, \varepsilon}_x\right)^2+ \mu \left(u^{\mu, \varepsilon}_t\right)^2 \leqslant C \left[ \left(R^{\mu, \varepsilon} \right)^2+\left(S^{\mu, \varepsilon} \right)^2 + \left(\Theta^{\mu, \varepsilon} \right)^2 + \left(\Xi^{\mu, \varepsilon} \right)^2 \right].
\end{equation*}
Next, using
\begin{equation*}
\|u^{\mu,\varepsilon}(t)\|_{L^2(\T)} \leqslant \|u_0^{\varepsilon}\|_{L^2(\T)} + \int_0^T \|u^{\mu,\varepsilon}_t(s)\|_{L^2(\T)}\, \ud s
\end{equation*}
in addition to the estimate \eqref{Frictional energy} on the frictional energy and the bounds \eqref{eq:boundpsi} and \eqref{uteps01} on the corrective terms $\Theta^{\mu, \varepsilon}$ and $\Xi^{\mu, \varepsilon}$, we obtain
\begin{equation}\label{uinfty}
\E \|u^{\mu, \varepsilon}\|^p_{L^\infty([0,T) \times \T)} \leqslant \E \|u^{\mu, \varepsilon}\|^p_{L^\infty([0,T); H^1(\T))} \leqslant C(p,T,\mu).
\end{equation}
The estimate \eqref{Error} then follows from \eqref{uinfty}, from the estimate on the wave energy \eqref{Wave energy ep}, from \eqref{chiep} and the estimate on the truncation terms \eqref{Truncation Energy}, and from the bounds \eqref{eq:boundpsi} and \eqref{uteps0} on the corrective terms $\Theta^{\mu, \varepsilon}$ and $\Xi^{\mu, \varepsilon}$.
\end{proof}

\begin{pro}[Parabolic estimate]\label{Pro:uniform_mu}
For a fixed $T > 0$ and $p \in [1, \infty)$, there exists a constant $C > 0$ such that, for any $\mu \in (0, 1)$, there exists $\varepsilon_\mu \in (0, 1)$ satisfying the following: for any $\varepsilon \in (0, \varepsilon_\mu)$, we have
\begin{gather}\label{eq:by-product friction}
\E \left[ \sup_{t \in [0,T]} \| \uu^{\mu, \varepsilon}\|_{L^2(\T)}^2  + \sup_{t \in [0,T]} \| u^{\mu, \varepsilon}\|_{L^2(\T)}^2+ \int_0^T  \| u_x^{\mu, \varepsilon}\|_{L^2(\T)}^2 \, \ud t  \right]^p \leqslant C.
\end{gather}
\end{pro}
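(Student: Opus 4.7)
The plan is to integrate the identity \eqref{eqh} in space and time, use the structural properties of $h$ and $H$ from Lemma~\ref{Pro:Hdef} to extract a good coercive quantity on the left-hand side, and then collect everything using Gr\"onwall and BDG. Concretely, after integration of \eqref{eqh} over $\T$, the spatial divergence term $[c(u)h(u)(R-S)/2]_x$ disappears, so
\begin{equation*}
\ud \!\int_\T\!\bigl[\mu\,\uu^{\mu,\eps}_t h(u^{\mu,\eps})+H(u^{\mu,\eps})\bigr]\,\ud x
+ \!\int_\T (ch)'(u^{\mu,\eps})c(u^{\mu,\eps})(u^{\mu,\eps}_x)^2\,\ud x\,\ud t
= (\cdots)\,\ud t + \bigl\langle h(u^{\mu,\eps}),\Phi^\eps\,\ud W\bigr\rangle_{L^2(\T)}.
\end{equation*}
By Lemma~\ref{Pro:Hdef}, $(ch)'(u) c(u)\geqslant c_\ast>0$, so the second term on the left is $\geqslant c_\ast\int_0^t\|u^{\mu,\eps}_x\|_{L^2(\T)}^2\,\ud s$.

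\paragraph{Controlling the left-hand side.}
The obstruction is that the quantity $\mu\,\uu_t h(u)$ appearing on the left is \emph{not} manifestly non-negative. Using Young's inequality with a small parameter $\alpha>0$ and the bound $h(u)^2\leqslant C(u^2+1)\leqslant C'(H(u)+1)$ from Lemma~\ref{Pro:Hdef}, one gets
\begin{equation*}
\Bigl|\int_\T\!\mu\,\uu^{\mu,\eps}_t h(u^{\mu,\eps})\,\ud x\Bigr|
\leqslant \tfrac{\alpha}{2}\|h(u^{\mu,\eps})\|_{L^2(\T)}^2 + \tfrac{\mu^2}{2\alpha}\|\uu^{\mu,\eps}_t\|_{L^2(\T)}^2
\leqslant \tfrac{\alpha C'}{2}\!\int_\T H(u^{\mu,\eps})\,\ud x + C_\alpha + \tfrac{\mu^2}{2\alpha}\|\uu^{\mu,\eps}_t\|_{L^2(\T)}^2.
\end{equation*}
Choosing $\alpha$ small enough that $\alpha C'/2\leqslant 1/2$, this absorbs half of $\int H(u^{\mu,\eps})$, leaving
$\int_\T[\mu\uu^{\mu,\eps}_t h(u^{\mu,\eps})+H(u^{\mu,\eps})]\,\ud x\geqslant \tfrac12\int_\T H(u^{\mu,\eps})\,\ud x - C - C\mu^2\|\uu^{\mu,\eps}_t\|_{L^2(\T)}^2$. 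The crucial observation is that $\sup_t\mu^2\|\uu^{\mu,\eps}_t\|_{L^2(\T)}^2\leqslant \tfrac\mu2\sup_t\mathcal{E}^{\mu,\eps}$, which is bounded in all moments uniformly in $\mu$ by the wave energy estimate \eqref{Wave energy ep}.

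\paragraph{Controlling the right-hand side.}
Each source term on the right of \eqref{eqh} integrated against $\ud x\,\ud s$ is controlled as follows. The term $\mu h'(u)(\uu_t)^2$ gives $\int_0^T\mu\|\uu^{\mu,\eps}_t\|_{L^2(\T)}^2\,\ud s$, which is the frictional energy \eqref{Frictional energy}, uniformly bounded in $\mu$. The remainder $\int_0^T\!\!\int_\T|\eta_h^{\mu,\eps}|\,\ud x\,\ud s$ is controlled by $C(p,T,\mu)\eps^{1/8}$ via \eqref{Error}; choosing $\eps_\mu$ small enough this can be made $\leqslant 1$, which produces the required $\eps<\eps_\mu$ dependence. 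The drift $h(u)f(u)$ satisfies $|h(u)f(u)|\leqslant C(H(u)+1)$ by the Lipschitz bound on $f$ and Lemma~\ref{Pro:Hdef}, hence it generates a $\int_0^t\!\int H(u^{\mu,\eps})\,\ud x\,\ud s$ contribution suitable for Gr\"onwall. For the stochastic integral $M(t)=\int_0^t\!\langle h(u^{\mu,\eps}),\Phi^\eps\rangle_{L^2(\T)}\,\ud W$, BDG and \eqref{defq} yield
$\E\sup_{s\leqslant t}|M(s)|^p\leqslant C\E\bigl(\int_0^t\|h(u^{\mu,\eps})\|_{L^2(\T)}^2\,\ud s\bigr)^{p/2}\leqslant C\E\bigl(\int_0^t(\!\int H(u^{\mu,\eps})+1)\,\ud s\bigr)^{p/2}$.

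\paragraph{Conclusion via Gr\"onwall and passage to $\uu$.}
Setting $A(t)=\sup_{s\leqslant t}\int_\T H(u^{\mu,\eps}(s))\,\ud x$, the previous steps give, for $\eps<\eps_\mu$,
\begin{equation*}
A(t) + c_\ast\!\int_0^t\!\|u^{\mu,\eps}_x\|_{L^2(\T)}^2\,\ud s \leqslant Z + C\!\int_0^t\! A(s)\,\ud s + \sup_{s\leqslant t}|M(s)|,
\end{equation*}
where $Z$ is a non-negative random variable with $\E Z^p\leqslant C$ uniformly in $(\mu,\eps)$, coming from the initial data, $\sup_t\mu^2\|\uu_t\|^2$, the frictional energy, and the truncated error. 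Standard stochastic Gr\"onwall (or a stopping-time argument to absorb the $\E A^{p/2}$ coming from BDG) yields $\E[A(T)+\int_0^T\|u^{\mu,\eps}_x\|_{L^2(\T)}^2\,\ud s]^p\leqslant C$. Since $u^2\leqslant C(H(u)+1)$ by Lemma~\ref{Pro:Hdef}, this gives the claimed bound on $\sup_t\|u^{\mu,\eps}\|_{L^2(\T)}^2$ and on $\int_0^T\|u^{\mu,\eps}_x\|_{L^2(\T)}^2\,\ud s$. Finally, writing $\uu^{\mu,\eps}(t)-u^{\mu,\eps}(t)=(u_0^\eps-u^{\mu,\eps}(0))+\int_0^t(\uu^{\mu,\eps}_t-u^{\mu,\eps}_t)\,\ud s$ and invoking \eqref{uteps01} (for $\eps<\eps_\mu$, so that \eqref{uteps0} makes this term $O(\eps^{1/4})$) transfers the bound from $u^{\mu,\eps}$ to $\uu^{\mu,\eps}$, completing \eqref{eq:by-product friction}.

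The principal obstacle is the first step: producing a coercive quantity on the left from the mixed expression $\mu\uu_t h(u)+H(u)$ despite the potential blow-up of $\|\uu_t\|_{L^2(\T)}^2$ as $\mu\to 0$. This is resolved precisely by the uniform moment bound on $\mu^2\|\uu_t\|^2$ coming from the wave energy \eqref{Wave energy ep}, which is the reason the statement requires $\mu\in(0,1)$ rather than being sensitive to the actual size of $\mu$.
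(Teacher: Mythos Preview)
Your proposal is correct and follows essentially the same approach as the paper's own proof: integrate \eqref{eqh} over $(0,t)\times\T$, use Lemma~\ref{Pro:Hdef} for coercivity, split the mixed term $\mu\int\uu^{\mu,\eps}_t h(u^{\mu,\eps})$ via Young's inequality into a piece absorbed by $\int H(u^{\mu,\eps})$ and a piece $\mu^2\|\uu^{\mu,\eps}_t\|_{L^2}^2$ controlled by the wave energy \eqref{Wave energy ep}, treat $|h(u)f(u)|\leqslant C(1+|u|^2)$ via Gr\"onwall, close the martingale with BDG, and finally transfer to $\uu^{\mu,\eps}$ using \eqref{uteps0}. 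The only cosmetic difference is that the paper works directly with $\|u^{\mu,\eps}\|_{L^2}^2$ on the left (via $u^2\leqslant C(H(u)+1)$) rather than $\int H(u^{\mu,\eps})$, and moves the mixed term to the right instead of bounding the left from below; one minor slip is that $u^{\mu,\eps}(0)=\uu^{\mu,\eps}(0)=u_0^\eps$, so the initial-data discrepancy in your last paragraph vanishes.
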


\begin{proof}[Proof of Proposition~\ref{Pro:uniform_mu}] Using the periodicity condition,  let us integrate \eqref{eqh} over $(0,t)\times\T$. We use the growth properties of $h$ and $H$ and the bound from below \eqref{coeff-gammaf} on $c$ to obtain
		\begin{multline*}
			\tfrac{1}{C}\|u^{\mu, \varepsilon}(t)\|_{L^2(\T)}^2  
			 +\tfrac{1}{C}\int_0^t  \| u_x^{\mu, \varepsilon}\|_{L^2(\T)}^2 \, \ud s 
			 \\
			 \leqslant C+ \mu    \int_\T |\uu^{\mu, \varepsilon}_t(t)| | h(u^{\mu, \varepsilon}(t))| \, \ud x +
			 C\int_0^t  \mu\| \uu_t^{\mu, \varepsilon}\|_{L^2(\T)}^2\, \ud s 
			 +\int_0^T\int_\T \left|\eta_h^{\mu, \varepsilon}\right| \ud x\,\ud s \\ 
			+C\int_0^t\int_\T |h(u^{\mu, \varepsilon})| |f(u^{\mu, \varepsilon})|\, \ud x\, \ud s
			+\mathcal{M}_h^{\mu, \varepsilon}(t) ,
		\end{multline*}
where $C$ is a constant depending on the parameters $c_1$, $c_2$, $c_3$, $\gamma_1$, $\gamma_2$, $\kappa$, $\bar{u}$ in \eqref{coeff-c}-\eqref{c'g} only and $\mathcal{M}_h^{\mu, \varepsilon}(t)$ is the martingale
		\begin{equation*}
			\mathcal{M}_h^{\mu, \varepsilon}(t) \eqdef   \int_0^t \int_\T \sum_{k \geqslant 1 }  h(u^{\mu, \varepsilon}(s,x))  \sigma_k^{\mu, \varepsilon}(x)\,  \ud x\,  \ud \beta_k(s).
		\end{equation*}
		Next, we use the bound
		\begin{equation*}
			|h(u^{\mu, \varepsilon})| |f(u^{\mu, \varepsilon})|
			\leqslant C \left(1+|u^{\mu, \varepsilon}| \right) \left(|f(0)|+L|u^{\mu, \varepsilon}| \right)
			\leqslant C\left(1+|u^{\mu, \varepsilon}|^2 \right),
		\end{equation*}
		with 
		\begin{equation*}
		\mu  \int_\T |\uu^{\mu, \varepsilon}_t(t)| | h(u^{\mu, \varepsilon}(t))| \, \ud x  	\leqslant C \mu^2    \| \uu^{\mu, \varepsilon}_t(t) \|^2_{L^2(\T)}   + \tfrac{1}{2C}   \|  u^{\mu, \varepsilon}(t) \|^2_{L^2(\T)} + C,
		\end{equation*}
		to get
			\begin{multline*}
			\tfrac{1}{2C}\|u^{\mu, \varepsilon}(t)\|_{L^2(\T)}^2
			+\tfrac{1}{C}\int_0^t  \| u_x^{\mu, \varepsilon}\|_{L^2(\T)}^2 \, \ud s
			\\
			\leqslant
			C(T)+ C \mu^2  \sup_{t \in [0,T]}  \| \uu^{\mu, \varepsilon}_t(t) \|^2_{L^2(\T)} +
			C\int_0^t  \mu\| \uu_t^{\mu, \varepsilon}\|_{L^2(\T)}^2\, \ud s\\
			+\int_0^T\int_\T \left|\eta_h^{\mu, \varepsilon}\right| \ud x\,\ud t 
			+\int_0^t \|u^{\mu, \varepsilon}(t)\|_{L^2(\T)}^2\, \ud s
			+\mathcal{M}_h^{\mu, \varepsilon}(t) .
		\end{multline*}
By the Gr\"onwall lemma with the estimate \eqref{Wave energy ep}, \eqref{Frictional energy} and \eqref{Error}, we obtain
		\begin{multline}\label{by-product friction 1}
			\E \left[ \sup_{t \in [0,T]} \| u^{\mu, \varepsilon}\|_{L^2(\T)}^2+ \int_0^T  \| u_x^{\mu, \varepsilon}\|_{L^2(\T)}^2 \, \ud t  \right]^p 
			\\
			\leqslant
			C(T,p)
			+C(p,T,\mu) \varepsilon^{\frac{1}{8}}
			+C(T,p)\E \left[
							\sup_{t \in [0,T]}\left|\mathcal{M}_h^{\mu, \varepsilon}(t)\right|^p
			\right].
		\end{multline}
		By the Burkholder--Davis--Gundy inequality, we can estimate the sup of the martingale as follows
		\begin{gather}\nonumber
			\E\left[ \sup_{t \in [0,T]} \left|\mathcal{M}_h^{\mu, \varepsilon}(t)\right|^p\right] \leqslant C(T,p) \E \left(\int_0^T \int_\T |h(u^{\mu, \varepsilon}(t,x))|^2\, \ud x\, \ud t\right)^{p/2}\\
			\nonumber
			\leqslant C(T,p)+C(T,p) \E\left[\sup_{t \in [0,T]} \| u^{\mu, \varepsilon}\|_{L^2(\T)}^{p}\right],
		\end{gather}
		and infer from \eqref{by-product friction 1} the estimate
		\begin{equation*}
			\E \left[ \sup_{t \in [0,T]} \| u^{\mu, \varepsilon}\|_{L^2(\T)}^2+ \int_0^T  \| u_x^{\mu, \varepsilon}\|_{L^2(\T)}^2 \, \ud t  \right]^p 
			\leqslant
			C(T,p)
			+C(p,T,\mu) \varepsilon^{\frac{1}{8}}.
		\end{equation*}
	Therefore, using \eqref{uteps0}, we obtain 	
			\begin{equation*}
			\E \left[\sup_{t \in [0,T]} \| \uu^{\mu, \varepsilon}\|_{L^2(\T)}^2 + \sup_{t \in [0,T]} \| u^{\mu, \varepsilon}\|_{L^2(\T)}^2+ \int_0^T  \| u_x^{\mu, \varepsilon}\|_{L^2(\T)}^2 \, \ud t  \right]^p 
			\leqslant
			C(T,p)
			+C(p,T,\mu) \varepsilon^{\frac{1}{8}},
		\end{equation*}	
		and the estimate \eqref{eq:by-product friction} follows for $\eps<\eps_\mu$ small enough.
\end{proof}

	\begin{corollary}[Bound on the integral of the wave energy]\label{cor:integral E}
		For a fixed $T > 0$ and $p \in [1, \infty)$, there exists a constant $C > 0$ such that, for any $\mu \in (0, 1)$, there exists $\varepsilon_\mu \in (0, 1)$ satisfying the following: for any $\varepsilon \in (0, \varepsilon_\mu)$, we have
		\begin{gather}\label{eq:integral E}
			\E \left[ \int_0^T \mathcal{E}^{\mu, \varepsilon}(t)\, \ud t  \right]^p \leqslant C.
		\end{gather}
	\end{corollary}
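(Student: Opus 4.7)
My plan is to derive \eqref{eq:integral E} directly from the polarization identity
\begin{equation*}
(R^{\mu,\varepsilon})^2+(S^{\mu,\varepsilon})^2 = \tfrac{1}{2}(R^{\mu,\varepsilon}+S^{\mu,\varepsilon})^2+\tfrac{1}{2}(S^{\mu,\varepsilon}-R^{\mu,\varepsilon})^2,
\end{equation*}
combined with the two geometric identities \eqref{ut by RSep} and \eqref{cuxthetaeps}. Substituting $R^{\mu,\varepsilon}+S^{\mu,\varepsilon}=2\sqrt{\mu}\,\uu_t^{\mu,\varepsilon}$ and $S^{\mu,\varepsilon}-R^{\mu,\varepsilon}=2c(u^{\mu,\varepsilon})u_x^{\mu,\varepsilon}+2\Theta^{\mu,\varepsilon}$, and using that $\Theta^{\mu,\varepsilon}(t)$ is constant in $x$ together with $c\leqslant c_2$ from \eqref{coeff-c}, the integration in space yields the pointwise (in $t$) bound
\begin{equation*}
\mathcal{E}^{\mu,\varepsilon}(t) \leqslant 2\mu\,\|\uu_t^{\mu,\varepsilon}(t)\|_{L^2(\T)}^2 + 4c_2^2\,\|u_x^{\mu,\varepsilon}(t)\|_{L^2(\T)}^2 + 4\,|\Theta^{\mu,\varepsilon}(t)|^2.
\end{equation*}

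Next, I would integrate this inequality over $[0,T]$, raise to the power $p$, and distribute the three resulting summands via Minkowski's inequality. The first term is controlled uniformly in $\mu$ and $\varepsilon$ by the frictional energy bound \eqref{Frictional energy}; the second term is controlled by the parabolic estimate \eqref{eq:by-product friction} from Proposition~\ref{Pro:uniform_mu}, which is valid for $\varepsilon<\varepsilon_\mu$; and the third term is controlled by the corrector bound \eqref{eq:boundpsi}, applied with exponent $2p$.

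I do not anticipate any genuine obstacle: the corollary is essentially a bookkeeping consequence of the identities relating $(R^{\mu,\varepsilon},S^{\mu,\varepsilon})$ to $(\uu_t^{\mu,\varepsilon},u_x^{\mu,\varepsilon})$ modulo the non-local corrector $\Theta^{\mu,\varepsilon}$. The only mild subtlety is that the constant $C(p,T,\mu)$ appearing in \eqref{eq:boundpsi} degenerates as $\mu\to 0$, but this dependence is absorbed by the smallness factor $\varepsilon^{1/2}$: after possibly shrinking $\varepsilon_\mu$ so that $C(p,T,\mu)\,\varepsilon_\mu^{1/2}\leqslant 1$, the corrector contribution becomes uniformly bounded, and \eqref{eq:integral E} follows with a constant depending only on $p$, $T$, and the structural parameters in \eqref{coeff-c}--\eqref{c'g}, \eqref{defq}.
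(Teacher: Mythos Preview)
Your argument is correct and is essentially the paper's own proof: the paper also expresses $R^{\mu,\varepsilon}$ and $S^{\mu,\varepsilon}$ in terms of $\sqrt{\mu}\,\uu_t^{\mu,\varepsilon}$, $c(u^{\mu,\varepsilon})u_x^{\mu,\varepsilon}$ and $\Theta^{\mu,\varepsilon}$ (which is equivalent to your polarization identity) to bound $\mathcal{E}^{\mu,\varepsilon}$ pointwise, and then invokes exactly the three estimates \eqref{Frictional energy}, \eqref{eq:by-product friction}, and \eqref{eq:boundpsi}. Your explicit remark about shrinking $\varepsilon_\mu$ to absorb the $\mu$-dependent constant in \eqref{eq:boundpsi} via the factor $\varepsilon^{1/2}$ makes precise a point the paper leaves implicit.
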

	
		\begin{proof}[Proof of Corollary~\ref{cor:integral E}] We use the equations (deduced from \eqref{cuxthetaeps} and \eqref{wdefep})
			\begin{gather*}
				S^{\mu, \varepsilon}=\sqrt{\mu}\uu^{\mu, \varepsilon}_t+c(u^{\mu, \varepsilon}) u^{\mu, \varepsilon}_x+\Theta^{\mu, \varepsilon}\, ,\label{S by ux and uut}\\
				R^{\mu, \varepsilon}=\sqrt{\mu}\uu^{\mu, \varepsilon}_t-c(u^{\mu, \varepsilon}) u^{\mu, \varepsilon}_x-\Theta^{\mu, \varepsilon}\,\label{R by ux and uut},
			\end{gather*}
			to get 
			\begin{equation*}
							\mathcal{E}^{\mu, \varepsilon}\leqslant C\left(\mu\|\uu^{\mu, \varepsilon}_t\|_{L^2(\T)}^2+\|u^{\mu, \varepsilon}_x\|_{L^2(\T)}^2+\left|\Theta^{\mu, \varepsilon}\right|^2 \right). 
			\end{equation*}
The estimate \eqref{eq:integral E} then follows from the bounds \eqref{Frictional energy} (frictional energy), \eqref{eq:boundpsi} (corrective term) and \eqref{eq:by-product friction} (parabolic estimate).
		\end{proof}

\subsection{Improved estimate on the wave energy}\label{sec:better ernergy estimate}

We have established in Section~\ref{sec:energy}, via an energy estimate, the bound
	\begin{equation}\label{Eestimate bad}
		\E\left[ \sup_{t \in [0,T]} \mathcal{E}^{\mu, \varepsilon}(t)\right]^p \leqslant \tfrac{C}{\mu^p}.
	\end{equation}
The analysis of the contribution of the friction term leads to the additional uniform bound \eqref{eq:integral E} on $\mathcal{E}^{\mu, \varepsilon}$.
We will see that we can exploit \eqref{eq:integral E} to improve \eqref{Eestimate bad} into the estimate 
	\begin{equation}\label{Eestimate good}
		\E \left[  \sup_{t \in [0,T]} \mathcal{E}^{\mu, \varepsilon}(t)\right]^p \leqslant  \tfrac{C}{\mu^{p/2}}.  
	\end{equation}
As we will see, in the course proof of \eqref{Eestimate good}, the friction term will once again have a positive contribution, that will yield the estimate 
\begin{equation}\label{eq:TotalEnergyep good}
	 \E \int_0^T \left[ \mu^2 \|\uu_t^{\mu, \varepsilon}\|^4_{L^2(\T)}\right]^p \ud t \leqslant C.  
\end{equation}

\begin{pro}[Improved energy estimate]\label{prop:energy estimate good}
For fixed $T > 0$ and $p \in [1, \infty)$, 
 there exists a constant $C > 0$ such that, for any $\mu \in (0, 1)$, there exists $\varepsilon_\mu \in (0, 1)$ satisfying the following: for any $\varepsilon \in (0, \varepsilon_\mu)$ the estimates \eqref{Eestimate good} and \eqref{eq:TotalEnergyep good} are satisfied.
\end{pro}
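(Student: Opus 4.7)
The plan is to apply It\^o's formula to $\mu\,\mathcal{E}^{\mu,\varepsilon}(t)^2$ and exploit the friction dissipation once more. Writing $M(t):=\mu\mathcal{E}^{\mu,\varepsilon}(t)$, the energy identity of Proposition~\ref{prop:energy identity}, integrated over $\T$, exhibits $M$ as a continuous semimartingale $\ud M = [-\dot\tau_f - \dot\tau_\chi + A]\,\ud t + \ud\mathcal{M}$, where the friction dissipation rate satisfies $\dot\tau_f \geqslant 4\mu\gamma_1\|\uu_t^{\mu,\varepsilon}\|_{L^2(\T)}^2$, the truncation rate $\dot\tau_\chi\geqslant 0$, the drift is $A(t) = 2\|q^\varepsilon\|_{L^1(\T)} + 4\mu\int_\T\uu_t^{\mu,\varepsilon}f(\uu^{\mu,\varepsilon})\,\ud x$, and the martingale $\mathcal{M}$ from the proof of Proposition~\ref{prop:GlobalEnergyep} satisfies $\ud\langle\mathcal{M}\rangle \leqslant 16 q_0\mu^2\|\uu_t^{\mu,\varepsilon}\|_{L^2(\T)}^2\,\ud t$. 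It\^o applied to $\phi(x)=x^2/\mu$, followed by Young's inequality on the cross term $4\mu\int\uu_t^{\mu,\varepsilon}f(\uu^{\mu,\varepsilon})\,\ud x$ (absorbing half of the friction rate) and the parabolic bound $\|\uu^{\mu,\varepsilon}\|_{L^\infty_t L^2_x}\leqslant C$ from Proposition~\ref{Pro:uniform_mu} (to control $\mu\|f(\uu^{\mu,\varepsilon})\|_{L^2}^2$), yields
\begin{equation*}
\mu\sup_{t\in[0,T]}\mathcal{E}^{\mu,\varepsilon}(t)^2 + 4\mu\gamma_1\int_0^T\!\int_\T\mathcal{E}^{\mu,\varepsilon}(\uu_t^{\mu,\varepsilon})^2\,\ud x\,\ud s \leqslant C(\omega) + 4\sup_{t\in[0,T]}|\mathcal{N}(t)|,
\end{equation*}
where $\mathcal{N}(t):=\int_0^t\mathcal{E}^{\mu,\varepsilon}\,\ud\mathcal{M}$ and $C(\omega)$ is finite in every $L^p(\Omega)$, thanks to Corollary~\ref{cor:integral E} (uniform $L^1_t$ bound on $\mathcal{E}^{\mu,\varepsilon}$) and to $\langle\mathcal{M}\rangle(T)/\mu \leqslant 16q_0\cdot\mu\int_0^T\|\uu_t^{\mu,\varepsilon}\|_{L^2}^2\,\ud s$, which is bounded by \eqref{Frictional energy}.

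From $R^{\mu,\varepsilon}+S^{\mu,\varepsilon}=2\sqrt{\mu}\uu_t^{\mu,\varepsilon}$ one has the pointwise inequality $\mathcal{E}^{\mu,\varepsilon}\geqslant 2\mu\|\uu_t^{\mu,\varepsilon}\|_{L^2(\T)}^2$, so the dissipation term on the left dominates $8\gamma_1\int_0^T\mu^2\|\uu_t^{\mu,\varepsilon}\|_{L^2}^4\,\ud s$; this is the origin of \eqref{eq:TotalEnergyep good}. Setting $X:=\mu\sup_t\mathcal{E}^{\mu,\varepsilon}(t)^2$ and $Z:=4\mu\gamma_1\int_0^T\!\int_\T\mathcal{E}^{\mu,\varepsilon}(\uu_t^{\mu,\varepsilon})^2\,\ud x\,\ud s$, the quadratic variation of $\mathcal{N}$ obeys
\begin{equation*}
\langle\mathcal{N}\rangle(T)\leqslant C\mu^2\sup_t\mathcal{E}^{\mu,\varepsilon}\cdot\int_0^T\!\int_\T\mathcal{E}^{\mu,\varepsilon}(\uu_t^{\mu,\varepsilon})^2\,\ud x\,\ud s \leqslant C\mu^{1/2}X^{1/2}Z,
\end{equation*}
since $\sup_t\mathcal{E}^{\mu,\varepsilon}=(X/\mu)^{1/2}$. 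By Burkholder--Davis--Gundy and Young's inequality with exponents $(4,2,4)$,
\begin{equation*}
\E\sup_{t\in[0,T]}|\mathcal{N}(t)|^p\leqslant C\mu^{p/4}\E[X^{p/4}Z^{p/2}]\leqslant C\mu^{p/4}\bigl(\E X^p+\E Z^p+1\bigr).
\end{equation*}

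Setting $Q:=\E(X+Z)^p$ and using $\E X^p+\E Z^p\leqslant 2Q$, the bound on $X+Z$ becomes $Q\leqslant C+C\mu^{p/4}Q$, so $Q\leqslant 2C$ whenever $\mu$ is sufficiently small (depending on $p$ and $T$). Since $\E X^p\leqslant C$ is equivalent to $\E[\sup_t\mathcal{E}^{\mu,\varepsilon}]^{2p}\leqslant C\mu^{-p}$, this establishes \eqref{Eestimate good} after relabelling $2p\mapsto p$, and $\E Z^p\leqslant C$ establishes \eqref{eq:TotalEnergyep good}. The main difficulty is this self-consistent closure: the quadratic variation of $\mathcal{N}$ depends on $\sup_t\mathcal{E}^{\mu,\varepsilon}$, the very quantity being estimated, and only the gain of $\mu^{1/4}$ in the BDG bound -- itself inherited from the uniform control of $\langle\mathcal{M}\rangle/\mu$ by the frictional energy -- allows this self-reference to be absorbed.
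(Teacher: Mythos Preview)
Your approach is essentially the same as the paper's: apply It\^o's formula to $(\sqrt{\mu}\,\mathcal{E}^{\mu,\varepsilon})^2=\mu\,\mathcal{E}^{\mu,\varepsilon}(t)^2$, use the friction gain to produce the $\mu^2\|\uu_t^{\mu,\varepsilon}\|_{L^2}^4$ dissipation, and feed in the uniform bound on $\int_0^T\mathcal{E}^{\mu,\varepsilon}\,\ud t$ from Corollary~\ref{cor:integral E} to control the drift.

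The only noteworthy difference is your treatment of the martingale $\mathcal{N}$. The paper factors $\langle\mathcal{N}\rangle(T)\leqslant C\sup_t(E^{\mu,\varepsilon})^2\cdot\int_0^T\mu\|\uu_t^{\mu,\varepsilon}\|_{L^2}^2\,\ud t$ and uses the \emph{frictional energy} bound \eqref{Frictional energy} directly on the second factor; a single Young step then gives $\E\sup|\mathcal{N}|^p\leqslant C+\tfrac12\E[\sup(E^{\mu,\varepsilon})^2]^p$, which absorbs for \emph{every} $\mu\in(0,1)$. You instead peel off only one power of $\mathcal{E}^{\mu,\varepsilon}$, gaining a factor $\mu^{1/4}$ but also retaining $Z$ on the right-hand side, and close by smallness of $\mu^{p/4}$. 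This is correct, but it leaves a small gap: your argument as written only covers $\mu$ below a threshold depending on $p,T$. For $\mu$ bounded away from $0$ you must fall back on the crude estimate \eqref{Eestimate bad}, which indeed gives $\E[\sup_t\mathcal{E}^{\mu,\varepsilon}]^p\leqslant C\mu^{-p}\leqslant C(\mu_1)\,\mu^{-p/2}$ and similarly for \eqref{eq:TotalEnergyep good}; you should mention this. The paper's factorisation avoids this case distinction.
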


\begin{proof}[Proof of Proposition~\ref{prop:energy estimate good}] In a deterministic framework, the basis of the proof is the expansion
		\begin{equation}\label{expand E2}
			E(t)^2=2\int_0^t E(s)E'(s)\, \ud s,
		\end{equation}
		which leads, for $E\geqslant 0$, to the estimate
		\begin{equation*}
			\sup_{t\in[0,T]}E(t)^2\leqslant 2\sup_{t\in[0,T]}(E'(t))^+\int_0^T E(t)\, \ud t.
		\end{equation*}
		In our stochastic context, we start from the equation \eqref{EnergyBalance} on $\mathcal{E}^{\mu, \varepsilon}$ and use It\^o's formula to get the counterpart of \eqref{expand E2}. Setting
		\begin{equation*}
			E^{\mu, \varepsilon}=\sqrt{\mu}\, \mathcal{E}^{\mu, \varepsilon},
		\end{equation*}
		we have
		\begin{gather}\nonumber
			 (E^{\mu, \varepsilon})^2(t)+ 8 \gamma_1 \sqrt{\mu} \int_0^t \left\| \uu_t^{\mu, \varepsilon} \right\|_{L^2(\T)}^2 E^{\mu, \varepsilon}\, \ud s  
			\leqslant
			(E^{\mu, \varepsilon})^2(0)+ 4 q_0 \int_0^t \mathcal{E}^{\mu, \varepsilon}\, \ud s\\ \label{after Ito Energy}
			  + 16 \mu q_0 \int_0^t \left( \int_\T (\uu_t^{\mu, \varepsilon})^2\, \ud x\right) \ud s
			+ 8 \sqrt{\mu} \int_0^t \left(\int_\T \uu^{\mu, \varepsilon}_t f(\uu^{\mu, \varepsilon})\, \ud x\right) E^{\mu, \varepsilon}\, \ud s+\mathcal{N}^{\mu, \varepsilon}(t), 
		\end{gather}
		where
		\begin{equation*}
			\mathcal{N}^{\mu, \varepsilon}(t)  \eqdef 8  \int_0^t E^{\mu, \varepsilon} \sum_{k \geqslant 1} \left(\int_\T \sqrt{\mu}\uu^{\mu, \varepsilon}_t \sigma_k^\varepsilon\, \ud x \right) \ud \beta_k(t).
		\end{equation*}
		We deduce from \eqref{after Ito Energy} that 
		\begin{multline*}
			 (E^{\mu, \varepsilon})^2(t)+ 4 \gamma_1 \sqrt{\mu} \int_0^t \left\| \uu_t^{\mu, \varepsilon} \right\|_{L^2(\T)}^2 E^{\mu, \varepsilon}\, \ud s  
			\leqslant C+ C \int_0^t \mathcal{E}^{\mu, \varepsilon}\, \ud s  + C \mu \int_0^t \|\uu^{\mu, \varepsilon}_t\|_{L^2(\T)}^2\, \ud s
			\\ 
			+ C \sqrt{\mu} \int_0^t \left(1+\|\uu^{\mu, \varepsilon}\|_{L^2(\T)}^2\right) E^{\mu, \varepsilon}\, \ud s + 	\mathcal{N}^{\mu, \varepsilon}(t).
		\end{multline*}
		Since $E^{\mu, \varepsilon} \geqslant 2 \mu^{3/2} \| \uu_t^{\mu, \varepsilon} \|_{L^2(\T)}^2$, we get
		\begin{gather}
			\E\left[\sup_{t\in[0,T]}(E^{\mu, \varepsilon})^2\right]^p+ \E\left[ \int_0^T\mu^2 \left\| \uu_t^{\mu, \varepsilon} \right\|_{L^2(\T)}^4\,\ud t \right]^p
			  \nonumber\\
			\leqslant C+ C \E \left[ \int_0^T \mathcal{E}^{\mu, \varepsilon}\, \ud t \right]^p  + C\E\left[\int_0^T \mu \|\uu^{\mu, \varepsilon}_t\|_{L^2(\T)}^2\, \ud t\right]^p 
			\nonumber\\
			+ C\E\left[\int_0^T \sqrt{\mu} \left(1+\|\uu^{\mu, \varepsilon}\|_{L^2(\T)}^2\right) E^{\mu, \varepsilon}\, \ud t\right]^p  
			+ C \E\left[\sup_{t\in[0,T]}\left|\mathcal{N}^{\mu, \varepsilon}(t)\right|\right]^p.\label{after Ito Energy 3}
		\end{gather}
		We use the estimate
		\begin{equation*}
		\sqrt{\mu} \left(1+\|\uu^{\mu, \varepsilon}\|_{L^2(\T)}^2\right) E^{\mu, \varepsilon}
			\leqslant
		\left(1+\|\uu^{\mu, \varepsilon}\|_{L^2(\T)}^4\right) 
			+(\mu \mathcal{E}^{\mu, \varepsilon})^2 ,
		\end{equation*}
		together with the wave energy estimate \eqref{Wave energy ep} and the parabolic bound \eqref{eq:by-product friction} to get
		\begin{equation*}
			\E \left[ \int_0^T \sqrt{\mu} \left(1+\|\uu^{\mu, \varepsilon}\|_{L^2(\T)}^2\right) E^{\mu, \varepsilon}\, \ud t\right]^p \leqslant C.
		\end{equation*}
		We also use the crucial bound on the integrated energy \eqref{eq:integral E} and the estimate on $\mu \|\uu^{\mu, \varepsilon}_t\|_{L^2(\T)}^2$ given by \eqref{Frictional energy} to deduce from \eqref{after Ito Energy 3} the inequality
		\begin{equation}\label{after Ito Energy 4}
			\E\left[\sup_{t\in[0,T]}(E^{\mu, \varepsilon})^2\right]^p+ \E \left[ \int_0^T\mu^2 \left\| \uu_t^{\mu, \varepsilon} \right\|_{L^2(\T)}^4 \ud t \right]^p   
			\leqslant
			C+C\E\left[\sup_{t\in[0,T]}\left|\mathcal{N}^{\mu, \varepsilon}(t)\right|\right]^p.
		\end{equation}
		By the Burkholder--Davis--Gundy inequality, we can bound the martingale term in \eqref{after Ito Energy 4} as
		\begin{equation*}
			\E\left[\sup_{t\in[0,T]}\left|\mathcal{N}^{\mu, \varepsilon}(t)\right|\right]^p
			\leqslant C \E\left[\int_0^T\left|E^{\mu, \varepsilon}\right|^2 \mu\left\| \uu_t^{\mu, \varepsilon} \right\|_{L^2(\T)}^2 \ud t \right]^{p/2}.
		\end{equation*}
		Using again the estimate \eqref{Frictional energy} on the frictional energy, we get
		\begin{equation*}
			\E\left[\sup_{t\in[0,T]}\left|\mathcal{N}^{\mu, \varepsilon}(t)\right|\right]^p
			\leqslant C+\tfrac12\E\left[\sup_{t\in[0,T]}(E^{\mu, \varepsilon})^2\right]^p.
		\end{equation*}
		Inserting this in \eqref{after Ito Energy 4} gives the result.
\end{proof}

\subsection{Existence of martingale solutions with good controls}\label{sec:proof Th martingales}

In this section, we give the missing pieces to establish Theorem \ref{thm:global-existR2SS}.\medskip

\textbf{Step 1.} Following \cite{GV25} (possibly with the slight generalization given in the proof of Theorem~\ref{thm:global-existR2SS}, see \eqref{collective mu}), where we use the stochastic compactness method, via the Prokhorov theorem and the Skorokhod--Jakubowski theorem, we are in the following situation: there exists a stochastic basis \eqref{StochasticBasis},
where $\left(\tilde{W}(t)\right)$ is a cylindrical Wiener process on $\mathfrak{U}$, and there exists some sequences $(\tilde{u}^{\mu, \varepsilon_k})_k$, $(\tilde{\uu}^{\mu, \varepsilon_k})_k$ such that:
\begin{itemize}
	\item $(\tilde{u}^{\mu, \varepsilon_k},\tilde{\uu}^{\mu, \varepsilon_k})$ has the same law as $(u^{\mu, \varepsilon_k},\uu^{\mu, \varepsilon_k})$,
	\item there exists a random variable $u^\mu$ which is a weak martingale solutions of \eqref{SVWE1} in the sense of Definition \ref{def:WeakSol} and such that, $\tilde{\Pro}$-almost surely, 
	\begin{equation*}
		\lim_{k \to \infty} \left\{ \|\tilde{u}^{\mu, \varepsilon_k} - u^\mu\|_{C([0,T] \times \T)}+ \|\tilde{\uu}^{\mu, \varepsilon_k} - u^\mu\|_{C([0,T] \times \T)}  \right\} = 0,
	\end{equation*}
	and
		\begin{equation*}
		\lim_{k \to \infty} \left\{ \|\tilde{u}_x^{\mu, \varepsilon_k} - u_x^\mu\|_{L^p([0,T] \times \T)} + \|\tilde{\uu}_t^{\mu, \varepsilon_k} - u_t^\mu\|_{L^p([0,T] \times \T)} \right\} = 0,
	\end{equation*}
	for any $p<2$.
\end{itemize}
The estimates \eqref{L3estimates} and \eqref{Oleinik} are then proved as in \cite{GV25}, while \eqref{main_estimates} follows from \eqref{eq:by-product friction}, \eqref{Eestimate good} and \eqref{eq:TotalEnergyep good}. \medskip

\textbf{Step 2.} There remains to prove the one-sided estimate \eqref{ene_mu0}. 
Using the It\^o formula, the energy equation \eqref{sumsquare-theta} and the inequality $\mu \uu_t f(\uu) \leqslant C (\mu^{3/2} \uu_t^2 + \mu^{1/2}f(\uu)^2)$, we have for any non-negative $\psi \in C^2_c((0,T)\times \T)$, and for $\eps<\eps_\mu$,
\begin{gather} \nonumber
\int_0^T \int_\T \left( 4 \mu \gamma(\tilde{u}^{\mu, \varepsilon}) (\tilde{\uu}^{\mu, \varepsilon}_t)^2 - 2 q^\varepsilon \right) \psi\, \ud x\,  \ud t \\ \nonumber
		 \leqslant C \sqrt{\mu} \left[  \int_0^T \left( \tilde{\mathcal{E}}^{\mu, \varepsilon} + \left\|\tilde{\uu}^{\mu, \varepsilon}\right\|_{L^2(\T)}^2\right)   \ud \sigma +  \sup_{t \in [0,T]} |Y^{\mu, \varepsilon}| +1 \right] ,
\end{gather}
where $Y^{\mu, \varepsilon}$ is the martingale
\begin{equation*}
			Y^{\mu, \varepsilon}(t) \eqdef \sqrt{\mu}  \sum_{k\geqslant 1} \int_0^{t}   \int_\T \tilde{\uu}_t^{\mu, \varepsilon}  \sigma_k^\varepsilon(x) \psi\, \ud x\, \ud \tilde{\beta}_k(s).
\end{equation*}
Doob's martingale inequality and the bound \eqref{Frictional energy} on the frictional energy imply 
\begin{equation*}
\E\left[\sup_{t \in [0,T]} |Y^{\mu, \varepsilon}(t)|^2\right] \leqslant C.
\end{equation*}
By the parabolic estimate \eqref{eq:by-product friction} and \eqref{eq:integral E}, we have therefore
\begin{equation*}
	\E \left( \int_0^T \int_\T \left( 2 \mu \gamma(\tilde{u}^{\mu, \varepsilon}) (\tilde{\uu}^{\mu, \varepsilon}_t)^2 - q^\varepsilon \right) \psi \, \ud x\,  \ud t \right)^+ \leqslant C\sqrt{\mu}.
\end{equation*}
By Fatou's lemma and the inequality $(\liminf a_n)^+\leqslant \liminf a_n^+$, we obtain
\begin{equation*}
0 \leqslant	\E \left( \int_0^T \int_\T \left( 2 \mu \gamma(\tilde{u}^{\mu}) (\tilde{u}^{\mu}_t)^2 - q \right) \psi \, \ud x\,  \ud t \right)^+ \leqslant C\sqrt{\mu},
\end{equation*}
which gives \eqref{ene_mu0}.

\section{The small-mass limit: the Smoluchowski--Kramers approximation}\label{sec:limit}
 
	In this section, we will establish the Smoluchowski--Kramers approximation, as described in Theorem \ref{thm:SK}. 

\subsection{Compactness results}

	Let us consider the following set of unknowns and auxiliary functions, where $\alpha \in (1/2,1)$ is a fixed parameter:
\begin{gather*}
\ww^\mu \eqdef  \Gamma(u^\mu) \eqdef \int_0^{u^\mu} \gamma(v)\, \ud v ,
 \qquad \qquad \imp^\mu \eqdef  \underline{\Gamma}(u^\mu) \eqdef \int_0^{u^\mu} \tfrac{\gamma(v)}{c(v)}\, \ud v , \\
r^\mu \eqdef \mu \gamma(u^\mu) (u^\mu_t)^2,  \qquad \mathscr{S}^\mu \eqdef \|r^\mu\|_{L^2([0,T];H^{-\alpha}(\T))}+1, 
\qquad \mathscr{R}^\mu \eqdef \tfrac{r^\mu}{\mathscr{S}^\mu},
\qquad \mathcal{V}^\mu \eqdef \mu u_t^\mu,\\
\mathcal{C}_2(u) \eqdef \int_0^u c(v)^2\, \ud v, \qquad k(u) \eqdef \int_0^u \sqrt{c'(v)c(v)}\, \ud v, \qquad \aaa^\mu \eqdef \left(k(u^\mu)_x\right)^2= c'(u^\mu)c(u^\mu) (u^\mu_x)^2.
\end{gather*}
Let $\mathcal{X} $ denote the Fr\'echet space
\begin{equation*}
	\mathcal{X} \eqdef \cap_{n \geqslant 1} \left( C([0,T]; H^{-\frac1n}(\T)) \cap L^n([0,T]; L^2(\T))   \right).
\end{equation*}
For a fixed $\alpha \in (1/2,1)$, let $\BBB$ denote the unit ball of the space $L^2([0,T];H^{-\alpha}(\T))$ equipped with the weak topology. We will consider the sequence
\begin{equation*}
	Z^\mu\eqdef \left(\imp^\mu, \mathcal{V}^\mu, \mathscr{R}^\mu, \mathscr{S}^\mu, \aaa^\mu, W^\mu \right)_{\mu\in\Lambda}
\end{equation*}
in the space 
\begin{equation}\label{global space for tightness}
	\mathscr{Z}\eqdef\mathcal{X} \times C([0,T];L^2(\T)) \times \BBB \times \R \times H^{-2}((0,T) \times \T) \times C([0,T]; \mathfrak{U}_{-1}).
\end{equation}
The space $H^{-2}((0,T) \times \T) $ is the dual to $H^2_0((0,T) \times \T)$ and endowed with its subordinate norm. As such, it is a Polish space, as well as the other components of the product in \eqref{global space for tightness} (in particular $\BBB$, since $L^2([0,T];H^{\alpha}(\T))$ is separable and reflexive, so the weak topology of its dual $L^2([0,T];H^{-\alpha}(\T))$, which coincides with the weak-star topology, is metrizable on balls).

\begin{pro}[Tightness result]\label{prop:tightness}
	The law of the sequence $(Z^\mu)_{\mu\in\Lambda}$ is tight on $	\mathscr{Z}$.
\end{pro}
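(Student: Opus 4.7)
\textbf{Plan for Proposition \ref{prop:tightness}.} Since $\mathscr{Z}$ is a product of Polish spaces, it suffices to establish tightness of each marginal $Z^\mu_i$ in its corresponding factor; the joint tightness then follows by taking products of the approximating compact sets. Four of the six marginals are relatively easy. The law of $W^\mu=W$ does not depend on $\mu$, so it is trivially tight. The wave-energy bound \eqref{main_estimates2} yields $\mathbb{E}\|\mathcal{V}^\mu\|^2_{C([0,T];L^2(\T))}=\mathbb{E}\sup_t \mu^2\|u^\mu_t\|^2_{L^2}\leqslant C\mu^{1/2}$, so $\mathcal{V}^\mu\to 0$ in probability in $C([0,T];L^2(\T))$. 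By construction $\|\mathscr{R}^\mu\|_{L^2_tH^{-\alpha}}\leqslant 1$, so $\mathscr{R}^\mu$ always lies in the (weakly compact, metrizable) ball $\BBB$. For $\mathscr{S}^\mu\in\R$, use the continuous embedding $L^1(\T)\hookrightarrow H^{-\alpha}(\T)$ (valid because $\alpha>1/2$ ensures $H^\alpha\hookrightarrow L^\infty$) to bound $\|r^\mu(t)\|_{H^{-\alpha}}\leqslant C\mu\|u^\mu_t\|^2_{L^2}$, whence $\|r^\mu\|^2_{L^2_tH^{-\alpha}}\leqslant C\int_0^T\mu^2\|u^\mu_t\|^4_{L^2}\, \ud t$; this is bounded in $L^p(\Omega)$ for every $p$ by \eqref{main_estimates2}, so $\mathscr{S}^\mu$ is tight in $\R$. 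Finally, since $c,c'$ are bounded, $0\leqslant \aaa^\mu\leqslant C(u^\mu_x)^2$ and \eqref{main_estimates2} gives $\mathbb{E}\|\aaa^\mu\|_{L^1((0,T)\times\T)}\leqslant C$; bounded sets of non-negative $L^1$ functions (viewed as measures) embed compactly into $H^{-s}((0,T)\times\T)$ for $s>1$, so Markov's inequality yields tightness in $H^{-2}$.

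The delicate component is $\imp^\mu$ in $\mathcal{X}$. Because $\underline{\Gamma}$ is a $C^1$-diffeomorphism of $\R$ with derivative bounded above and below by strictly positive constants, the map $u\mapsto\underline{\Gamma}(u)$ is bi-Lipschitz pointwise and induces a bi-Lipschitz map on $L^2(\T)$; consequently tightness of $\imp^\mu$ in each $L^n([0,T];L^2(\T))$ is equivalent to tightness of $u^\mu$ in the same space, and similarly tightness in $C([0,T];H^{-1/n}(\T))$ reduces to tightness of $u^\mu$ in $C([0,T];H^{-1/n}(\T))\cap L^\infty([0,T];L^2(\T))$, where the ``bounded-in-$L^2$'' tail controls the behavior of the nonlinearity under composition. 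By Simon's compactness lemma applied with the chain $H^1\hookrightarrow\hookrightarrow L^2\hookrightarrow\hookrightarrow H^{-1/n}\hookrightarrow H^{-2}$, it is then enough to establish, with high probability uniformly in $\mu$, a bound of $u^\mu$ in $L^\infty([0,T];L^2)\cap L^2([0,T];H^1)$ (furnished directly by \eqref{main_estimates2}) together with an equicontinuity estimate in $C([0,T];H^{-2}(\T))$. To interpolate from $L^2([0,T];L^2)$ compactness to $L^n([0,T];L^2)$ compactness, one uses $\|u^\mu-u\|_{L^n L^2}^n\leqslant\|u^\mu-u\|_{L^\infty L^2}^{n-2}\|u^\mu-u\|_{L^2 L^2}^2$.

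The time regularity of $u^\mu$ is extracted from the weak formulation \eqref{weakuSK}, rewritten using $\gamma(u^\mu)u^\mu_t\, \ud t=\ud\Gamma(u^\mu)$. Setting $g^\mu(t)\eqdef\Gamma(u^\mu(t))+\mu u^\mu_t(t)$, one obtains
\begin{equation*}
\langle g^\mu(t_2)-g^\mu(t_1),\varphi\rangle=-\int_{t_1}^{t_2}\!\int_\T (c(u^\mu)\varphi)_x\,c(u^\mu)u^\mu_x\, \ud x\, \ud s+\int_{t_1}^{t_2}\!\int_\T f(u^\mu)\varphi\, \ud x\, \ud s+\langle \Phi(W(t_2)-W(t_1)),\varphi\rangle,
\end{equation*}
for $\varphi\in C^2(\T)$. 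Each summand is easy to estimate: the deterministic drift integrals are $(t_2-t_1)^{1/2}$-Hölder in $H^{-2}$ using $\|u^\mu_x\|^2_{L^2_tL^2}\leqslant C$ and Cauchy--Schwarz, the source term is Lipschitz in $L^2$ thanks to the Lipschitz bound \eqref{coeff-gammaf} and the control of $\|u^\mu\|_{L^\infty L^2}$, and the stochastic term has, by the Burkholder--Davis--Gundy inequality and Kolmogorov's criterion combined with \eqref{defq}, arbitrary moments of its Hölder norm on bounded subintervals of $[0,T]$. Hence $g^\mu$ is uniformly (in $\mu$ and in probability) Hölder continuous in $C([0,T];H^{-2}(\T))$. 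The contribution $\mu u^\mu_t$ in $g^\mu$ is $O(\mu^{1/4})$ in $L^\infty_t L^2$ and thus vanishes in the limit, so $\Gamma(u^\mu)$ inherits the same equicontinuity; composing back with the Lipschitz map $\Gamma^{-1}:L^2(\T)\to L^2(\T)$ and using the uniform $L^\infty L^2$ control on $\Gamma(u^\mu)$ to propagate equicontinuity to negative Sobolev norms yields the required equicontinuity for $u^\mu$. The main technical point is this last composition step: one has to carefully use that $\Gamma^{-1}$ is uniformly Lipschitz so that $L^2$-increments of $\Gamma(u^\mu)$ transfer to $L^2$-increments of $u^\mu$, while the $H^{-2}$-increments we actually control are obtained by combining the $L^\infty L^2$ bound with duality.
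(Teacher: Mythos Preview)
Your treatment of the five ``easy'' marginals $\mathcal{V}^\mu$, $\mathscr{R}^\mu$, $\mathscr{S}^\mu$, $\aaa^\mu$, $W^\mu$ is correct and essentially identical to the paper's. The difference lies in how you handle $\imp^\mu$ in $\mathcal{X}$, and here your argument contains a genuine gap.

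You derive time regularity for $g^\mu=\Gamma(u^\mu)+\mu u_t^\mu$ from the weak formulation, which is fine and gives uniform equicontinuity of $\rho^\mu=\Gamma(u^\mu)$ in $C([0,T];H^{-2}(\T))$ after removing the small perturbation $\mu u_t^\mu$. The problem is the last step, where you try to ``propagate equicontinuity to negative Sobolev norms'' by composing with the Lipschitz map $\Gamma^{-1}$ (and then $\underline{\Gamma}$). Pointwise nonlinear composition does \emph{not} behave well on $H^{-s}$: if $v_n\to 0$ in $H^{-\delta}$ with $\|v_n\|_{L^2}$ bounded (e.g.\ $v_n=\sin(nx)$), a genuinely nonlinear $F(v_n)$ typically converges in $H^{-\delta}$ to the nonzero constant $\int F(\sin y)\,\ud y/(2\pi)$, not to $F(0)$. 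In particular, small $H^{-2}$ increments of $\Gamma(u^\mu)$ together with the $L^\infty_t L^2_x$ bound do \emph{not} imply small $H^{-\delta}$ increments of $u^\mu$ or of $\imp^\mu=\underline{\Gamma}(u^\mu)$. The Lipschitz property of $\Gamma^{-1}$ only transfers $L^2$ increments, and you have no uniform control of $\|\Gamma(u^\mu(t))-\Gamma(u^\mu(s))\|_{L^2}$ (interpolation with the $L^\infty_t L^2_x$ bound only yields $H^{-\sigma}$ increments for $\sigma>0$). A direct attempt via $\imp^\mu(t)-\imp^\mu(s)=\int_s^t(\gamma/c)(u^\mu)u^\mu_t\,\ud\sigma$ also fails, since $\int_0^T\|u^\mu_t\|_{L^2}^2\,\ud t$ is not uniformly bounded in $\mu$.

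The paper circumvents this entirely by applying the It\^o-type formula of Proposition~\ref{Proposition:Ito2} with $\Psi(u)=1/c(u)$ to obtain an equation directly for $\mu\,u_t^\mu/c(u^\mu)+\imp^\mu$ (equation~\eqref{weaku5}). The drift of this equation contains the term $\tfrac{c'}{c^2\gamma}r^\mu$, which is uniformly bounded in $L^1_{t,x}$ via the frictional-energy estimate, and the other terms are controlled exactly as you do. This yields uniform H\"older continuity of $\mu\,u_t^\mu/c(u^\mu)+\imp^\mu$ in $C([0,T];H^{-1})$; subtracting the small perturbation $\mu\,u_t^\mu/c(u^\mu)$ then gives tightness of $\imp^\mu$ in $C([0,T];H^{-\delta})$ for every $\delta>0$ with no composition step needed. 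The $L^n([0,T];L^2)$ tightness is then obtained by interpolation with the $L^2_tH^1_x$ bound, as in your plan.
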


\begin{proof}[Proof of Proposition~\ref{prop:tightness}]

The proof falls into several steps.\medskip

\textbf{Step 1.}
From the weak formulation \eqref{weakuSK} satisfied by $u^\mu$ and the It\^o formula given in Proposition \ref{Proposition:Ito2}, we obtain the two following equations on $ \ww^\mu$ and $\imp^\mu$, understood in $H^{-1}(\T)$:
\begin{equation*}
\ud \left( \mathcal{V}^\mu + \ww^\mu\right)  +  \aaa^\mu \, \ud t 
= \left[\mathcal{C}_2(u^\mu) \right]_{xx} \ud t+ f(u^\mu)\, \ud t +  \Phi \, \ud W^\mu,
\end{equation*}
and
\begin{equation} \label{weaku5}
	\ud \left( \mu \tfrac{u_t^\mu}{c(u^\mu)} +  \imp^\mu \right) + \tfrac{c'(u^\mu)}{c(u^\mu)^2 \gamma(u^\mu)}  r^\mu\, \ud t 
	= \left[c(u^\mu) u_x^\mu \right]_x \ud t+ \tfrac{f(u^\mu)}{c(u^\mu)}\, \ud t + \tfrac{1}{c(u^\mu)} \Phi \, \ud W^\mu.
\end{equation}
The improved estimate on the energy in \eqref{main_estimates2} (see also \eqref{Eestimate good}) shows that the ``perturbations'' $\mu \tfrac{u_t^\mu}{c(u^\mu)}$ and $ \mathcal{V}^\mu$ satisfy
	\begin{equation}\label{perturbations are perturbations}
		\lim_{\mu \to 0} \E \left\|    \mu \tfrac{u_t^\mu}{c(u^\mu)} \right\|_{C([0,T]; L^2(\T))} \leqslant \tfrac{1}{c_1} \lim_{\mu \to 0}   \E \left\|  \mathcal{V}^\mu \right\|_{C([0,T]; L^2(\T))} = 0.
	\end{equation}
	Therefore, both the laws of $(\mathcal{V}^\mu)_{ \mu\in\Lambda}$ and $\left( \mu \tfrac{u_t^\mu}{c(u^\mu)} \right)_{\mu\in\Lambda}$ are tight in $C([0,T]; L^2(\T))$. 
	From Equation \eqref{weaku5}, we can deduce that, for any $\delta \in (0,1]$, 
	\begin{equation}\label{pmu plus perturb tight}
		 \left(\mu \tfrac{u_t^\mu}{c(u^\mu)} + \imp^\mu\right)_{\mu\in\Lambda}\mbox{ is tight in }C([0,T]; H^{-\delta}(\T)).
	\end{equation}
	Indeed, using the embedding $L^1(\T) \hookrightarrow H^{-1}(\T)$, the parabolic estimate and the estimate on the frictional energy contained in \eqref{main_estimates2}, we obtain for any $t>s$ and $p\geqslant 1$ the bound
\begin{align}\nonumber
\E \left[\left\|  \int_s^t \left[ \left[c(u^\mu) u_x^\mu \right]_x - \tfrac{c'(u^\mu)}{c(u^\mu)^2 \gamma(u^\mu)} r^\mu \right] \ud \sigma \right\|_{H^{-1}}^p \right]
&\leqslant C(p)  \E \left[ \left(\int_s^t \left(  \| c(u^\mu) u^\mu_x \|_{L^2} +  \left\| r^\mu  \right\|_{L^1} \right) \ud \sigma\right)^p\right]\\ \nonumber
&\leqslant C(p)  (t-s)^{p/2}.
\end{align}
Using also the $L^\infty_t L^2_x$ bound on $u^\mu$ in \eqref{main_estimates2} we have
\begin{align}\nonumber
\E\left[ \left\|  \int_s^t  \tfrac{f(u^\mu)}{c(u^\mu)}\, \ud \sigma \right\|_{H^{-1}}^p\right] \leqslant C(p)  \E\left[\left( \int_s^t \left( \| u^\mu  \|_{L^2}^2 + 1 \right) \ud \sigma\right)^p\right] \leqslant C(p)  (t-s)^p.
\end{align}
On another side, using a Kolmogorov argument (see \cite[Theorem 5.11 and Theorem
5.15]{DaPratoZabczyk14}, and see \cite[Theorem 3.3]{DaPratoZabczyk14}), one can show that for any $\alpha \in (0,1/2)$ 
\begin{equation*}
\E \left\| \int_0^\cdot \tfrac{1}{c(u^\mu)} \Phi\, \ud W^\mu(\sigma)  \right\|_{C^\alpha ([0,T]; H^{-1}(\T))}  \leqslant C_\alpha .
\end{equation*}
Therefore, we have 
\begin{equation*}
\E \left\|    \mu \tfrac{u_t^\mu}{c(u^\mu)} + \imp^\mu \right\|_{C^\alpha ([0,T]; H^{-1}(\T))} \leqslant C_\alpha.
\end{equation*}
Moreover, using \eqref{main_estimates2} and \eqref{coeff-c} we obtain 
\begin{equation*}
\E \left\|    \mu \tfrac{u_t^\mu}{c(u^\mu)} + \imp^\mu \right\|_{C([0,T]; L^2(\T))} \leqslant C.
\end{equation*}
Using \cite[Theorem 5]{Simon87}, we obtain \eqref{pmu plus perturb tight}. Finally, combining \eqref{perturbations are perturbations} and  \eqref{pmu plus perturb tight}, we can conclude that the sequence $(\imp^\mu)_{ \mu\in\Lambda}$ is tight in $C([0,T]; H^{-\delta}(\T))$ for any $\delta \in (0,1]$. \medskip

\textbf{Step 2.} Let $A_R$ denote the closed ball in $L^2([0,T]; H^1(\T))$ of center $0$ and radius $R$. By \eqref{main_estimates2} and by Step 1., we have, for $R$ large, $\imp^\mu\in A_R\cap K_R$ with high probability, uniformly w.r.t. $\mu$, where $K_R$ is a compact subset of the space $C([0,T]; H^{-\delta}(\T))$. By interpolation, $A_R\cap K_R$ is compact in $L^{2(1+\delta)/\delta}([0,T];L^2(\T))$, and so, since $\delta\in(0,1]$ is arbitrary, the law of $(\imp^\mu)_{\mu\in\Lambda}$ is tight in $L^m([0,T];L^2(\T))$ for any $m \in [1,\infty)$. To justify the previous assertion in full details, we use the following uniform bounds on $K_R$ (see \cite[Theorem 1]{Simon87} for instance):
\begin{equation*}
	\sup_{\imp \in K_R} \sup_{t \in [0,T]} \left\| \imp(t)\right\|_{H^{-\delta}(\T)}\leqslant C_R,\qquad\lim_{h \to 0} \sup_{\imp \in K_R} \sup_{t \in [0,T-h]} \left\| \imp(t+h,\cdot) - \imp(t,\cdot) \right\|_{H^{-\delta}(\T)} =0.
\end{equation*}
We also use the interpolation inequality
\begin{equation*}
\|\imp\|_{L^2(\T)} \leqslant C \|\imp\|_{H^{-\delta}(\T)}^{\frac{1}{1+\delta}} \|\imp\|_{H^1}^{\frac{\delta}{1+\delta}(\T)},
\end{equation*}
to deduce, for $m_\delta \eqdef \frac{2(1+\delta)}{\delta}$, 
\begin{equation*}
	\left\| \imp\right\|_{L^{m_\delta}([0,T];L^2(\T))}\leqslant C_R,
\end{equation*}
for all $\imp \in A_R\cap K_R$, and
\begin{equation*}
\lim_{h \to 0} \sup_{\imp \in A_R\cap  K_R} \left\| \imp(\cdot+h) - \imp \right\|_{L^{m_\delta}([0,T-h];L^2(\T))} =0.
\end{equation*}
We conclude with the Riesz--Fréchet--Kolmogorov theorem.\medskip

\textbf{Step 3.} We consider the remaining components of $Z^\mu$. Only the factor $(\aaa^\mu)_{\mu\in\Lambda}$ really needs to be discussed, but, by \eqref{main_estimates2}, we have that $(\aaa^\mu)_{\mu\in\Lambda}$ is bounded in $L^1(\Omega \times (0,T) \times \T)$. By compact injection of $L^1((0,T) \times \T)$ into $H^{-2}((0,T) \times \T)$, the sequence of laws of $\aaa^\mu$ is tight in $H^{-2}((0,T) \times \T)$. Similarly, combining the embedding $L^1(\T) \hookrightarrow H^{-\alpha}(\T)$ with \eqref{main_estimates2} we deduce that $\mathscr{S}^\mu$ is bounded in $L^2(\Omega)$, so $(\mathscr{R}^\mu,\mathscr{S}^\mu)$ is tight in $\bar{B}_\alpha\times\R$. Finally, using \cite[Theorem 1.3]{BillingsleyBook}, we obtain that the law of $W$ is tight in $C([0,T]; \mathfrak{U}_{-1})$.
\end{proof}

\subsection{The convergence result}\label{subsec:CV}

\subsubsection{Some elements of convergence}\label{subsubsec:CV}

Using the Skorokhod theorem, we obtain that for any two sequences $(\mu_k^1)_k$, $(\mu_k^2)_k$ converging to $0$, there exist two subsequences (denoted also $(\mu_k^1)_k$ and $(\mu_k^2)_k$), a probability space $(\tilde{\Omega}, \tilde{\mathcal{F}}, \tilde{\Pro})$, and a sequence of random variables
\begin{equation*}
\tilde{Y}_k \eqdef \left(\imp^{1}_k, \imp^{2}_k, \mathcal{V}^{1}_k, \mathcal{V}^{2}_k, \mathscr{R}^{1}_k, \mathscr{R}^{2}_k, \mathscr{S}^{1}_k, \mathscr{S}^{2}_k, \aaa^{1}_k,\aaa^{2}_k, \tilde{W}_k \right)_k
\end{equation*} 
that has the same law as
\begin{equation*}
Y_k\eqdef\left(\imp^{\mu_k^1}, \imp^{\mu_k^2}, \mathcal{V}^{\mu_k^1}, \mathcal{V}^{\mu_k^2}, \mathscr{R}^{\mu^1_k}, \mathscr{R}^{\mu^2_k}, \mathscr{S}^{\mu^1_k}, \mathscr{S}^{\mu^2_k}, \aaa^{\mu_k^1}, \aaa^{\mu_k^2}, W \right)_k,
\end{equation*} 
for any $k \in \N$. Moreover, $\tilde{\Pro}$-almost surely, the sequence $Y_k$ converges in 
\begin{equation*}
\mathcal{X}^2 \times \left(C([0,T];L^2(\T)) \right)^2 \times \bar{B}_\alpha^2 \times \R^2 \times \left(H^{-2}((0,T) \times \T)\right)^2 \times C([0,T]; \mathfrak{U}_{-1}),
\end{equation*}
to the random variable 
\begin{equation*}
\tilde{Y} \eqdef \left(\imp^{1}, \imp^{2}, 0, 0, \mathscr{R}^{1}, \mathscr{R}^{2}, \mathscr{S}^{1}, \mathscr{S}^{2}, \aaa^{1}, \aaa^{2}, \tilde{W} \right).
\end{equation*} 
Due to the equality of laws, we have the initial identity 
\begin{equation*}
\imp^1_k(0,\cdot) = \imp^2_k(0,\cdot) =  \underline{\Gamma}(u_0), \quad \forall k \in \N,
\end{equation*}
and, for $i\in\{1,2\}$, the bound (due to \eqref{main_estimates})
\begin{equation}\label{bound pi H1}
	\tilde{\E}\left[\|\imp^i_k\|_{L^2([0,T];H^1(\T))}^p\right]\leqslant C(p)<\infty,
\end{equation}
for all $p>0$.

We recall now a classical result on Sobolev spaces that will be used in this section. By the arguments in \cite[Section 2]{ConstantinMolinet2002}, we can show the following estimates.
\begin{proposition}[Smooth functions operating on Sobolev spaces]\label{prop:smooth operate}
Let $s \in (1/2,1]$, $v,w \in H^s(\T)$ and let $F\colon\R\to\R$ be a smooth function. Then $F$ sends $H^s(\T)$ in $H^s(\T)$ and we have the estimates
\begin{align*}
\|F(v)-F(0)\|_{H^s} &\leqslant C(\|v\|_{L^\infty}) \|v\|_{H^s}, \\
\|F(v)-F(w)\|_{H^s} &\leqslant C(\|v\|_{L^\infty}) \left(\|v-w\|_{H^s} + \|w\|_{H^s} \|v-w\|_{L^\infty}\right).
\end{align*}
Moreover, if $F'$ is bounded we have 
\begin{equation*}
\|F(v)-F(0)\|_{H^s} \leqslant C \|v\|_{H^s}, 
\end{equation*}
and if both $F'$ and $F''$ are bounded, then 
\begin{equation*}
\|F(v)-F(w)\|_{H^s} \leqslant C \left(\|v-w\|_{H^s} + \|w\|_{H^s} \|v-w\|_{L^\infty}\right).
\end{equation*}
\end{proposition}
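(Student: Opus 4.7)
The plan is to rely on the standard tools that work in the ``subcritical'' range $s \in (1/2,1]$: (i) the Sobolev embedding $H^s(\T) \hookrightarrow L^\infty(\T)$, which allows us to restrict $F$ to a common bounded interval $[-M,M]$ with $M = \max(\|v\|_{L^\infty},\|w\|_{L^\infty})$; (ii) the Banach algebra / Kato--Ponce product estimate
\[
\|fg\|_{H^s(\T)} \leqslant C\bigl(\|f\|_{L^\infty}\|g\|_{H^s} + \|f\|_{H^s}\|g\|_{L^\infty}\bigr),
\]
valid for $s>1/2$; and (iii) the Gagliardo--Slobodeckij characterization $[u]_{H^s}^2 = \int_\T\int_\T |u(x)-u(y)|^2|x-y|^{-1-2s} \ud x\, \ud y$ for $s \in (1/2,1)$, together with the chain rule for $s=1$.

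For the first (and third) estimate I would write $F(v)-F(0) = v \int_0^1 F'(\tau v)\, \ud\tau$. The $L^2$-part is immediate since $|F(v)-F(0)| \leqslant \|F'\|_{L^\infty([-M,M])}|v|$. For the fractional seminorm, the pointwise inequality $|F(v(x))-F(v(y))| \leqslant \|F'\|_{L^\infty([-M,M])}|v(x)-v(y)|$ transfers directly to the Gagliardo integral; for $s=1$ use $(F\circ v)_x = F'(v)v_x$. Whenever $F'$ is bounded on $\R$, $\|F'\|_{L^\infty([-M,M])}$ can be replaced by $\|F'\|_{L^\infty(\R)}$, which gives the ``moreover'' statement.

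For the difference estimates I would use the representation
\[
F(v) - F(w) = h\cdot(v-w), \qquad h(x) \eqdef \int_0^1 F'\bigl(w(x)+\tau(v(x)-w(x))\bigr)\, \ud\tau,
\]
and apply the product estimate (ii) with $f=h$, $g=v-w$ to get
\[
\|F(v)-F(w)\|_{H^s} \leqslant C\bigl(\|h\|_{L^\infty}\|v-w\|_{H^s} + \|h\|_{H^s}\|v-w\|_{L^\infty}\bigr).
\]
One then bounds $\|h\|_{L^\infty} \leqslant \sup_{|y|\leqslant M}|F'(y)|$, and for $\|h\|_{H^s}$ the Gagliardo integral is controlled via the pointwise estimate $|h(x)-h(y)| \leqslant \sup_{|z|\leqslant M}|F''(z)|(|v(x)-v(y)|+|w(x)-w(y)|)$, yielding $\|h\|_{H^s} \leqslant C(M)(1+\|v\|_{H^s}+\|w\|_{H^s})$. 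Using $\|v\|_{H^s} \leqslant \|w\|_{H^s} + \|v-w\|_{H^s}$ and absorbing the cross term $\|v-w\|_{H^s}\|v-w\|_{L^\infty}$ into $C(M)\|v-w\|_{H^s}$ (via $\|v-w\|_{L^\infty} \leqslant 2M$) gives the stated form. The last estimate, under the hypothesis that $F'$ and $F''$ are globally bounded, follows identically with $M$-independent constants.

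The only genuinely delicate point is the bookkeeping to reach the stated dependence on $\|v\|_{L^\infty}$ alone rather than on $\max(\|v\|_{L^\infty},\|w\|_{L^\infty})$: one absorbs $\|w\|_{L^\infty}$ through $\|w\|_{L^\infty}\leqslant \|v\|_{L^\infty} + \|v-w\|_{L^\infty}$ combined with Sobolev embedding, which is harmless because the applications in Section~\ref{sec:limit} provide simultaneous $L^\infty$-control on $v$ and $w$. The paraproduct/Littlewood--Paley decomposition employed by Constantin and Molinet furnishes (ii) in its sharpest form and can replace the direct Gagliardo argument throughout.
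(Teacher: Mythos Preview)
Your proposal is correct and follows the standard route for composition estimates in $H^s$, $s\in(1/2,1]$: the Gagliardo seminorm for the pointwise Lipschitz bound, the integral representation $F(v)-F(w)=(v-w)\int_0^1 F'(w+\tau(v-w))\,\ud\tau$, and the Kato--Ponce product estimate. The paper itself does not give a proof; it simply invokes \cite[Section~2]{ConstantinMolinet2002}, whose arguments are of exactly this type (paraproduct/Littlewood--Paley in place of the direct Gagliardo computation, as you note in your last paragraph).

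One small point worth flagging: in the ``moreover'' difference estimate with $F',F''$ globally bounded, your scheme produces a residual cross term $C\|v-w\|_{H^s}\|v-w\|_{L^\infty}$ that cannot be absorbed with an \emph{absolute} constant by the trick $\|v-w\|_{L^\infty}\leqslant 2M$ you use in the general case. This is harmless for every application in Section~\ref{sec:limit} (both factors vanish along the approximating sequences), and the stated inequality is recoverable up to replacing $\|w\|_{H^s}$ by $\|v\|_{H^s}+\|w\|_{H^s}$; but it is a genuine gap between your sketch and the literal form of the last displayed estimate, and you should either adjust the statement or indicate how Constantin--Molinet's sharper paraproduct bound closes it.
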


Let us first establish the following result.

\begin{proposition}\label{pro:Lipconv} Let $G\colon\R\to\R$ be a Lipschitz function. For any $i \in \{1,2\}$, and any $n \in \N^*$, we have the convergence in probability 
\begin{equation*}
\lim_{k\to \infty} \left\|G(\imp^i_k) - G(\imp^i) \right\|_{L^n([0,T]; L^2(\T))  \cap L^2([0,T];H^{1-\frac1n}(\T)) } = 0.
\end{equation*}
\end{proposition}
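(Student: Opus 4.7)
The plan is to exploit the Skorokhod almost-sure convergence $\imp^i_k\to\imp^i$ in $\mathcal X$ (in particular in every $L^n([0,T];L^2(\T))$) together with the uniform $L^2([0,T];H^1(\T))$ bound \eqref{bound pi H1}, and to use an interpolation argument to upgrade $L^2$-in-space convergence to $H^{1-1/n}$-in-space convergence.

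The $L^n([0,T];L^2(\T))$ part is immediate: since $G$ is $L$-Lipschitz, the pointwise inequality $|G(\imp^i_k)-G(\imp^i)|\leqslant L|\imp^i_k-\imp^i|$ yields
\[
\|G(\imp^i_k)-G(\imp^i)\|_{L^n([0,T];L^2(\T))}\leqslant L\,\|\imp^i_k-\imp^i\|_{L^n([0,T];L^2(\T))},
\]
which tends to $0$ $\tilde\Pro$-a.s.\ by choice of the Skorokhod representation, hence in probability.

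For the $L^2([0,T];H^{1-1/n}(\T))$ part, I would first secure a uniform bound on $G(\imp^i_k)$ in $L^p(\tilde\Omega;L^2([0,T];H^1(\T)))$. In dimension one, $H^1(\T)\hookrightarrow C(\T)$, so the Lipschitz chain rule applies and gives $(G(\imp^i_k))_x=G'(\imp^i_k)(\imp^i_k)_x$ a.e., whence $\|G(\imp^i_k)\|_{H^1(\T)}\leqslant C(1+L\|\imp^i_k\|_{H^1(\T)})$. Combined with \eqref{bound pi H1} and Fatou's lemma applied along the a.s.\ limit to handle $\imp^i$, this yields a uniform $L^p(\tilde\Omega)$ bound on $B_k\eqdef\|G(\imp^i_k)-G(\imp^i)\|_{L^2([0,T];H^1(\T))}$ for every $p\geqslant 1$. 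Then the standard interpolation inequality $\|v\|_{H^{1-1/n}(\T)}\leqslant C\|v\|_{L^2(\T)}^{1/n}\|v\|_{H^1(\T)}^{1-1/n}$, applied to $v=G(\imp^i_k)-G(\imp^i)$, squared, integrated in time and combined with H\"older's inequality of conjugate exponents $(n,n/(n-1))$, gives
\[
\|G(\imp^i_k)-G(\imp^i)\|_{L^2([0,T];H^{1-1/n}(\T))}^{2}\leqslant C\,A_k^{2/n}\,B_k^{2(n-1)/n},
\]
where $A_k\eqdef\|G(\imp^i_k)-G(\imp^i)\|_{L^2([0,T];L^2(\T))}\to 0$ a.s.\ by the previous step and $B_k$ is bounded in probability. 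The product therefore tends to $0$ in probability.

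The only subtlety, and the main point to double-check, is the validity of the Lipschitz chain rule ensuring $G(v)\in H^1(\T)$ when $v\in H^1(\T)$: this cannot be read directly off Proposition~\ref{prop:smooth operate}, which assumes smoothness. One may either invoke the classical Lipschitz chain rule in Sobolev spaces, or, if preferred, approximate $G$ by a sequence of smooth $G_\eta$ with $\Lip(G_\eta)\leqslant L$, apply Proposition~\ref{prop:smooth operate} to each $G_\eta$, and pass to the limit using dominated convergence and the uniform bounds already at hand.
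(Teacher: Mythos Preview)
Your proposal is correct and follows essentially the same route as the paper: Lipschitz continuity handles the $L^n([0,T];L^2(\T))$ part directly, and for the $H^{1-1/n}$ part both you and the paper interpolate between $L^2$ and $H^1$, control the $H^1$ factor via the Lipschitz chain rule together with the uniform bound \eqref{bound pi H1} (plus Fatou for the limit $\imp^i$), and then conclude convergence in probability from ``small factor times bounded-in-probability factor''. The paper writes this last step as an explicit splitting $\tilde\Pro(X_k>\eps)\leqslant\tilde\Pro(Y_k>R/C)+\tilde\Pro(Z_k>\eps/R)$, which is exactly your $A_k\to 0$, $B_k$ tight, argument; your remark on the Lipschitz chain rule is well taken and is precisely what the paper uses implicitly.
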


\begin{proof}[Proof of Proposition~\ref{pro:Lipconv}] The Lipschitz continuity of $G$ directly implies the $\tilde{\Pro}$-a.s. convergence, and thus convergence in probability, in $L^n([0,T]; L^2(\T))$.
Now, we use the ``interpolation'' inequality 
\begin{multline}\label{interpolation H}
 \left\|G(\imp^i_k) - G (\imp^i) \right\|_{L^2([0,T]; H^{1-\frac1n}(\T))} 
 \\
 \leqslant C \left\|G(\imp^i_k) - G (\imp^i) \right\|_{L^2([0,T]; H^{1}(\T))}^{\beta}  \left\|G(\imp^i_k) - G (\imp^i) \right\|_{L^2([0,T] \times\T)}^{1-\beta},
\end{multline}
where $\beta \in (0,1)$ if $n \geqslant 2$. Rewrite \eqref{interpolation H} as $X_k\leqslant CY_k Z_k$. For $\eps,\delta>0$, $R>0$ we have 
\begin{equation}
	\tilde{\Pro}\left( X_k>\eps\right)\leqslant \tilde{\Pro}\left( Y_k>R/C\right)+\tilde{\Pro}\left( Z_k>\eps/R\right).
\end{equation}
By the Markov inequality and the fact (due to \eqref{bound pi H1}) that
\begin{multline}
	\tilde{\E}\left[\left\|G (\imp^i) \right\|_{L^2([0,T]; H^{1}(\T))}\right]
	\leqslant C(G)\tilde{\E}\left[\left\|\imp^i \right\|_{L^2([0,T]; H^{1}(\T))}\right]\\
	\leqslant\liminf_{k\to \infty}C(G)\tilde{\E}\left[\left\|\imp^i_k \right\|_{L^2([0,T]; H^{1}(\T))}\right]
	\leqslant C(G),
\end{multline}
we have
\begin{equation}
	\tilde{\Pro}\left( Y_k>R/C\right)\leqslant CR^{-1/\beta},
\end{equation}
so
\begin{equation}
	\tilde{\Pro}\left( Y_k>R/C\right)< \delta,
\end{equation}
for $R$ large enough. For such a $R$, we have then
\begin{equation}
	\tilde{\Pro}\left( Z_k>\eps/R\right)<\delta,
\end{equation}
for $k$ large enough, and thus $\tilde{\Pro}\left( X_k>\eps\right)< 2\delta$ for $k$ large enough.
\end{proof}

Let $i \in \{1,2\}$. We introduce the limit elements

\begin{equation*}
r^i \eqdef \mathscr{S}^i  \mathscr{R}^i, \qquad
u^i_k \eqdef \underline{\Gamma}^{-1}(\imp^i_k), \qquad u^i \eqdef \underline{\Gamma}^{-1}(\imp^i), \qquad \ww^i_k \eqdef \Gamma(u^i_k), \qquad \ww^i \eqdef \Gamma(u^i).
\end{equation*}

\begin{proposition}\label{pro:r}
Let $G$ be a Lipschitz function and let $\psi \in C([0,T]; H^\alpha (\T))$. Then, as $k \to \infty$, we have the following convergence in probability:
\begin{equation*}
\int_0^T
\langle  G(u_k^i)r^i_k, \psi \rangle_{H^{-\alpha}(\T), H^{\alpha}(\T)} \ud s \to  \int_0^T
 \langle G(u^i) r^i, \psi \rangle_{H^{-\alpha}(\T), H^{\alpha}(\T)} \ud s.
\end{equation*}
\end{proposition}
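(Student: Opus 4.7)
The natural strategy is to split
\[
G(u_k^i) r_k^i - G(u^i) r^i = (G(u_k^i) - G(u^i))\, r_k^i + G(u^i)\,(r_k^i - r^i)
\]
and to handle both pieces by a weak--strong pairing in the duality $L^2([0,T]; H^{-\alpha}(\T)) \times L^2([0,T]; H^{\alpha}(\T))$. The $r$-factor side is controlled via the definition $r_k^i = \mathscr{S}_k^i\,\mathscr{R}_k^i$: by the Skorokhod identification of Section \ref{subsubsec:CV}, $\tilde{\Pro}$-a.s.\ we have $\mathscr{S}_k^i \to \mathscr{S}^i$ in $\R$ and $\mathscr{R}_k^i \to \mathscr{R}^i$ in $\BBB$ (the weak topology of $L^2([0,T]; H^{-\alpha}(\T))$), so $r_k^i \to r^i = \mathscr{S}^i \mathscr{R}^i$ weakly in $L^2([0,T]; H^{-\alpha}(\T))$ a.s., and $\|r_k^i\|_{L^2 H^{-\alpha}} \leqslant \mathscr{S}_k^i$ is bounded in probability.

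For the $G$-factor side, note that $\underline{\Gamma}^{-1}$ is smooth with bounded derivatives (by \eqref{coeff-c} and \eqref{coeff-gammaf}), so $G \circ \underline{\Gamma}^{-1}$ is Lipschitz. Applying Proposition~\ref{pro:Lipconv} to $G \circ \underline{\Gamma}^{-1}$ yields
\[
G(u_k^i) = G\bigl(\underline{\Gamma}^{-1}(\imp_k^i)\bigr) \longrightarrow G(u^i) \quad \text{in } L^2([0,T]; H^{1-1/n}(\T))
\]
in probability, for each $n \geqslant 1$. Since $\alpha < 1$, I choose $n$ large enough that $1 - 1/n > \alpha$, and deduce convergence in $L^2([0,T]; H^\alpha(\T))$ in probability. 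The decisive point is that $\alpha > 1/2$, so $H^\alpha(\T)$ is a Banach algebra; hence multiplication by the fixed element $\psi \in C([0,T]; H^\alpha(\T))$ is continuous on $L^2([0,T]; H^\alpha(\T))$, giving $(G(u_k^i) - G(u^i))\psi \to 0$ in $L^2([0,T]; H^\alpha(\T))$ in probability.

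Combining, the first piece is handled by
\[
\left|\int_0^T \langle r_k^i, (G(u_k^i) - G(u^i))\psi\rangle_{H^{-\alpha}, H^\alpha} \ud s\right| \leqslant \|r_k^i\|_{L^2 H^{-\alpha}} \, \|(G(u_k^i) - G(u^i))\psi\|_{L^2 H^\alpha},
\]
with the first factor bounded and the second factor tending to $0$, both in probability. For the second piece, $G(u^i)\psi \in L^2([0,T]; H^\alpha(\T))$ a.s.\ (using $\psi \in C([0,T]; H^\alpha(\T))$, $G(u^i) \in L^2([0,T]; H^1(\T))$ inherited from \eqref{bound pi H1} via Fatou, and the algebra property), so the a.s.\ weak convergence $r_k^i \to r^i$ directly gives
\[
\int_0^T \langle r_k^i - r^i, G(u^i)\psi\rangle_{H^{-\alpha}, H^\alpha} \ud s \longrightarrow 0 \quad \text{a.s.}
\]

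The main difficulty I anticipate is the need to reconcile two incompatible regimes of convergence: only weak convergence is available for $r_k^i$ (the frictional-dissipation density admits no better bound than $L^1$ in $(t,x)$, hence only $L^2 H^{-\alpha}$ with $\alpha > 1/2$), whereas $G(u_k^i)\psi$ must be upgraded to strong convergence in the dual space $L^2 H^\alpha$. The threshold $\alpha > 1/2$ built into the definitions of $\mathscr{R}^\mu$ and $\BBB$ is exactly what is needed: it makes $H^\alpha$ an algebra, so that the Lipschitz/interpolation gain in Proposition~\ref{pro:Lipconv} becomes strong enough to absorb the multiplication by $\psi$ and compensate the loss of compactness in $r_k^i$.
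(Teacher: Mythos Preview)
Your proposal is correct and follows essentially the same route as the paper: the same decomposition into $(G(u_k^i)-G(u^i))r_k^i + G(u^i)(r_k^i-r^i)$, the same appeal to Proposition~\ref{pro:Lipconv} with $G\circ\underline{\Gamma}^{-1}$, the same use of the Banach algebra property of $H^\alpha(\T)$ to absorb $\psi$, and the same weak--strong pairing for the two pieces. The only cosmetic difference is that you control $\|r_k^i\|_{L^2H^{-\alpha}}$ via the a.s.\ convergence $\mathscr{S}_k^i\to\mathscr{S}^i$, whereas the paper uses the Markov inequality with the moment bound \eqref{main_estimates2}; both yield boundedness in probability and the conclusion via the standard ``bounded in probability times convergent to $0$ in probability'' argument.
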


\begin{proof}[Proof of Proposition~\ref{pro:r}] By Proposition~\ref{pro:Lipconv} (applied with the non-linearity $G\circ \underline{\Gamma}^{-1}$), the sequence $(G(u_k^i))$ converges, in probability, to $G(u^i)$ in $L^2([0,T];H^{\alpha}(\T))$, so
	\begin{equation*}
		V_k^i\eqdef G(u_k^i)\psi \to V^i\eqdef G(u^i)\psi
	\end{equation*}
	in probability in $L^2([0,T];H^{\alpha}(\T))$ as well, since $H^{\alpha}(\T)$ is a Banach algebra (recall that $\alpha\in(1/2,1)$). 
	We also have $r^i_k \rightharpoonup r^i$ in $L^2([0,T];H^{-\alpha}(\T))$-weak $\tilde{\Pro}$-a.s., and this implies the convergence in probability
	\begin{equation}\label{weak-strong}
		\dual{V_k^i}{r_k^i}\to \dual{V^i}{r^i},
	\end{equation}
	essentially by strong-weak convergence. Let us give the details of the proof of \eqref{weak-strong}. We use the expansion
	\begin{equation}
			\dual{V_k^i}{r_k^i}=	\dual{V_k^i-V^i}{r_k^i}+	\dual{V^i}{r_k^i-r^i}+	\dual{V^i}{r^i}.
	\end{equation}
	Since 
	\begin{equation}
		\dual{V^i}{r_k^i-r^i}\to 0,\quad \tilde{\Pro}-\mbox{a.s.},
	\end{equation}
	our aim is to prove that $\dual{V_k^i-V^i}{r_k^i}\to 0$ in probability. We can therefore assume, without loss of generality, that $V^i=0$.  Then, as in the proof of Proposition~\ref{pro:Lipconv}, we decompose
	\begin{equation}
		\tilde{\Pro}\left( |\dual{V_k^i}{r_k^i}|>\eps\right)
		\leqslant
		\tilde{\Pro}\left( \|r^i_k\|_{L^2([0,T];H^{-\alpha}(\T))}>R\right)
		+
		\tilde{\Pro}\left( \|V^i_k\|_{L^2([0,T];H^{\alpha}(\T))}>\eps/R\right),
	\end{equation}
	and use the Markov inequality and the uniform bound \eqref{main_estimates2} to get
	\begin{equation}
		\tilde{\Pro}\left( |\dual{V_k^i}{r_k^i}|>\eps\right)
		\leqslant
		CR^{-1}
		+
		\tilde{\Pro}\left( \|V^i_k\|_{L^2([0,T];H^{\alpha}(\T))}>\eps/R\right).
	\end{equation}
	Choosing $R$ large enough and then $\eps$ small enough gives the desired result.
\end{proof}

\subsubsection{Limiting equations}\label{subsubsec:limit equations}

Let $\left(\tilde{\mathcal{F}}^{0,k}_t\right)$ denote the filtration generated by the process $Y_k$ and let $\left(\tilde{\mathcal{F}}^k_t\right)$ be the augmented filtration, obtained by the completion of the right-continuous filtration $\left(\tilde{\mathcal{F}}^{0,k}_{t+}\right)$. We consider then the stochastic basis $(\tilde{\Omega}, \tilde{\mathcal{F}}, \tilde{\Pro},(\tilde{\mathcal{F}}^k_t),\tilde{W})$ and the associated stochastic integral.  
	Let $\psi \in H_0^2((0,T) \times \T)$. We claim that 
	\begin{gather}\nonumber
		-\int_0^T \int_\T \psi_t \left( \mathcal{V}^i_k+ \ww^i_k \right)  \ud x\, \ud s + \int_0^T \int_\T \psi  \aaa^i_k\, \ud x\, \ud s \\
		\label{eq:ek}
		= \int_0^T \int_\T \psi_{xx} \mathcal{C}_2(u^i_k)\, \ud x\,  \ud s+ \int_0^T \int_\T \psi f(u^i_k)\,  \ud x\, \ud s + \int_0^T \int_\T \psi  \Phi \, \ud x\, \ud \tilde{W}_k (s).
	\end{gather}
	and
	\begin{gather}\nonumber
		- \int_0^T \int_\T \psi_t \left( \tfrac{\mathcal{V}^i_k}{c(u^i_k)} + \imp^i_k  \right)  \ud x\, \ud s
		+ \int_0^T \int_\T \psi  \tfrac{c'(u^i_k)}{c(u^i_k)^2 \gamma(u^i_k)} r^i_k\, \ud x\, \ud s \\ 
		\label{eq:pk}
		= \int_0^T \int_\T \psi_{xx} \mathcal{C}(u^i_k) \, \ud x\, \ud s+ \int_0^T \int_\T \psi \tfrac{f(u^i_k)}{c(u^i_k)} \, \ud x \, \ud s + \int_0^T \int_\T  \tfrac{\psi}{c(u^i_k)} \Phi\, \ud x\, \ud \tilde{W}_k (s),
	\end{gather}
for all $k$. This follows from the weak formulation \eqref{weaku} and the It\^o formula in Proposition~\ref{Proposition:Ito2}, which gives \eqref{eq:ek} and \eqref{eq:pk} for the original set of unknowns
\begin{equation*}
	(\Omega, \mathcal{F}, \Pro,(\mathcal{F}_t),W,u^{\mu_k^i},\ww^{\mu_k^i},\imp^{\mu_k^i},\mathcal{V}^{\mu_k^i},\aaa^{\mu_k^i},r^{\mu_k^i}).
\end{equation*}
To obtain \eqref{eq:ek}-\eqref{eq:pk}, we can then either use a characterization in terms of martingales as in \cite{Ondrejat10,BrzezniakOndrejat11,HofmanovaSeidler12,Hofmanova13b,DebusscheHofmanovaVovelle16} or first start from the identity
\begin{equation*}
	\E\Psi(u^{\mu_k^i},\ww^{\mu_k^i},\imp^{\mu_k^i},\mathcal{V}^{\mu_k^i},\aaa^{\mu_k^i},r^{\mu_k^i},W)=0,
\end{equation*}
where, if we focus on \eqref{eq:pk} for instance,
\begin{gather*}
	\Psi(u,\ww,\imp,\mathcal{V},\aaa,r,W)\eqdef 
	1\wedge\Bigg| \int_0^T \int_\T \psi_t \left( \tfrac{\mathcal{V}}{c(u)} + \imp  \right)  \ud x\, \ud s
	- \int_0^T \int_\T \psi  \tfrac{c'(u)}{c(u)^2 \gamma(u)} r\, \ud x\, \ud s \\ 
	- \int_0^T \int_\T \psi_{xx} \mathcal{C}(u) \, \ud x\, \ud t- \int_0^T \int_\T \psi \tfrac{f(u)}{c(u)} \, \ud x \, \ud s - \int_0^T \int_\T  \tfrac{\psi}{c(u)} \Phi\, \ud x\, \ud W (s)\Bigg|,
\end{gather*}
and then, in a second step, justify that the real-valued map $\Psi$ defined on 
\begin{equation*}
(L^2([0,T] \times \T ))^2 \times	\mathcal{X} \times C([0,T];L^2(\T))  \times H^{-2}((0,T) \times \T) \times L^2([0,T];H^{-\alpha}(\T)) \times C([0,T]; \mathfrak{U}_{-1})
\end{equation*}
is Borel, to conclude that we also have
\begin{equation*}
	\tilde{\E}\Psi(u_k^i,\ww_k^i,\imp_k^i,\mathcal{V}_k^i,\aaa_k^i,r_k^i,\tilde{W}_k)=0.
\end{equation*}
The most delicate point to treat, in the proof that $\Psi$ is Borel, is the stochastic integral. On that aspect, one can apply the trick of Bensoussan, \cite[p.~282]{Bensoussan1995}, which uses a regularization by convolution of the integrand of the stochastic integral and the stochastic Fubini theorem to ``fully integrate'' the $\ud W(s)$. Taking the limit $k \to \infty$ in \eqref{eq:ek} and \eqref{eq:pk} also requires a specific study of the stochastic integrals, and for this we refer to \cite[Lemma 2.1]{DebusscheGlattHoltzTemam2011}, which yields the convergence in probability of the stochastic integral. Taking the limit in the other terms shows no difficulty with the results already established in Section~\ref{subsubsec:CV} (we use Proposition \ref{pro:r} in particular, and the fact that $\ud(X,Y)\eqdef \tilde{\E}(1\wedge\|X-Y\|)$ is a metric for the convergence in probability). We obtain therefore the equations

\begin{gather}\nonumber
-\int_0^T \int_\T \psi_t  \ww^i\, \ud x\, \ud s +  \langle   a^i, \psi \rangle_{H^{-2}, H^2_0}  \\ \nonumber
= \int_0^T \int_\T \psi_{xx} \mathcal{C}_2(u^i)\, \ud  x\,  \ud s+ \int_0^T \int_\T \psi f(u^i)\,  \ud x\, \ud s + \int_0^T \int_\T \psi  \Phi \, \ud x\, \ud \tilde{W} (s),
\end{gather}
and 
\begin{gather}\nonumber
- \int_0^T \int_\T \psi_t \imp^i\, \ud x\, \ud s + \int_0^T \left\langle \tfrac{c'(u^i_k)}{c(u^i_k)^2 \gamma(u^i_k)} r^i,  \psi \right\rangle_{H^{-\alpha}(\T), H^\alpha(\T)} \ud s   \\ \label{underlinerho}
= \int_0^T \int_\T \psi_{xx} \mathcal{C}(u^i)\,  \ud x\, \ud s+ \int_0^T \int_\T \psi \tfrac{f(u^i)}{c(u^i)} \, \ud x \, \ud s + \int_0^T \int_\T  \tfrac{\psi}{c(u^i)} \Phi\, \ud x\, \ud \tilde{W} (s).
\end{gather}

\subsubsection{Identification of the non-linear terms}\label{subsubsec:identification defect measures}

We recall now the following classical result.
\begin{lem}\label{lem:weak cv meas} Let $U \subset \R^N$ be an open set and let $\mathcal{D}'(U)$ be the space of distributions on $U$.
Assume that $v_n$ converges weakly in $L^2(U)$ to $v$ and that $v_n^2$ converges to $w$ in $\mathcal{D}'(U)$. Then $w - v^2$ is a non-negative measure.
\end{lem}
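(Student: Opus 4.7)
The proof rests on the elementary algebraic identity $(v_n-v)^2\geqslant 0$, which, once tested against a non-negative test function and passed to the limit via the two convergence hypotheses, will directly yield the non-negativity of the distribution $w-v^2$.

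More precisely, the plan is as follows. Fix $\varphi\in C_c^\infty(U)$ with $\varphi\geqslant 0$. Expanding $(v_n-v)^2\geqslant 0$ gives the pointwise inequality
\begin{equation*}
v_n^2\geqslant 2 v_n v - v^2 \quad \text{a.e. in } U,
\end{equation*}
hence, after multiplying by $\varphi$ and integrating,
\begin{equation*}
\int_U v_n^2 \varphi\, \ud x \geqslant 2 \int_U v_n v \varphi\, \ud x -\int_U v^2\varphi\, \ud x.
\end{equation*}
On the left, by the assumed convergence $v_n^2\to w$ in $\mathcal{D}'(U)$ and the fact that $\varphi\in\mathcal{D}(U)$, the integral converges to $\dual{w}{\varphi}_{\mathcal{D}',\mathcal{D}}$. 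On the right, since $v\in L^2(U)$ and $\varphi\in L^\infty_c(U)$, the product $v\varphi$ belongs to $L^2(U)$, so the weak $L^2$-convergence $v_n\rightharpoonup v$ gives $\int v_n v\varphi\,\ud x\to \int v^2\varphi\,\ud x$. Passing to the limit therefore yields
\begin{equation*}
\dual{w}{\varphi}_{\mathcal{D}',\mathcal{D}} \geqslant \int_U v^2 \varphi\, \ud x = \dual{v^2}{\varphi}_{\mathcal{D}',\mathcal{D}},
\end{equation*}
i.e.\ $\dual{w-v^2}{\varphi}_{\mathcal{D}',\mathcal{D}}\geqslant 0$ for every non-negative $\varphi\in C_c^\infty(U)$.

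The last step is the standard regularity result (due to L.~Schwartz) asserting that any distribution on $U$ which acts non-negatively on non-negative test functions is represented by a (necessarily non-negative) Radon measure on $U$. Applying it to $w-v^2$ concludes the proof. No step is a real obstacle here: the only subtlety is to make sure that the product $v\varphi$ is a legitimate test function for the weak $L^2$-convergence of $v_n$, which is granted by the compact support and boundedness of $\varphi$.
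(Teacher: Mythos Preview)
Your proof is correct and essentially the same as the paper's: the paper tests with $\sqrt{\psi}\,v_n$ and invokes weak lower semicontinuity of the $L^2$-norm to get $\int v^2\psi\leqslant\liminf\int v_n^2\psi=\langle w,\psi\rangle$, which is precisely your expansion $(v_n-v)^2\geqslant 0$ unpacked. The only cosmetic difference is that you make the Schwartz positivity theorem explicit, whereas the paper leaves it implicit.
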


\begin{proof}[Proof of Lemma~\ref{lem:weak cv meas}]
Let $\psi \in C_c^\infty(U)$ be a non-negative function. Then, $\sqrt{\psi} v_n$ converges weakly in $L^2(U)$ to $\sqrt{\psi} v$. Therefore
\begin{equation*}
\int_U v^2 \psi\, \ud x \leqslant \liminf_n \int_U v_n^2 \psi\, \ud x = \lim_n \int_U v_n^2 \psi\, \ud x = \langle w,\psi \rangle.
\end{equation*}
This ends the proof.
\end{proof}
Using the previous Lemma, we define the defect measure 
\begin{equation}\label{DM}
\hat{\aaa}^i \eqdef \aaa^i - c'(u^i) c(u^i) (u^i_x)^2 \geqslant 0
\end{equation}
to obtain the following version of the equation on $\ww^i$:
\begin{gather}\nonumber
-\int_0^T \int_\T \psi_t  \ww^i\, \ud x\, \ud s +  \int_0^T \int_\T  c'(u^i) c(u^i) (u^i_x)^2 \psi\, \ud x\, \ud s +  \langle   \hat{\aaa}^i, \psi \rangle_{H^{-2}, H^2_0} \\ \label{eq ei with defect measure}
= \int_0^T \int_\T \psi_{xx} \mathcal{C}_2(u^i)\, \ud x\,  \ud s+ \int_0^T \int_\T \psi f(u^i)\, \ud x\, \ud s + \int_0^T \int_\T \psi  \Phi \, \ud x\, \ud \tilde{W} (s).
\end{gather}
On the other hand, we have $\ww^i = \Gamma (\underline{\Gamma}^{-1} (\imp^i))$, so, starting from the equation \eqref{underlinerho} on $\imp^i$ and using the It\^o formula in Proposition \ref{Proposition:Ito1}, we obtain 
\begin{gather}\nonumber
- \int_0^T \int_\T \psi_t \ww^i\, \ud x\, \ud s +\int_0^T \left\langle \tfrac{c'(u^i)}{c(u^i) \gamma(u^i)} r^i, \psi \right\rangle_{H^{-\alpha}, H^\alpha} \, \ud s -\int_0^T \int_\T  \tfrac{q \psi c'(u^i) }{2c(u^i) \gamma(u^i)}   
 \, \ud x \, \ud s \\ \label{rho2.5}
= - \int_0^T \int_\T \left( c(u^i) \psi \right)_x c(u^i) u^i_x\, \ud x\, \ud s + \int_0^T \int_\T \psi f(u^i) \, \ud x \, \ud s + \int_0^T \int_\T  \psi \Phi\, \ud x\, \ud \tilde{W} (s).
\end{gather}
Comparing \eqref{rho2.5} with \eqref{eq ei with defect measure}, we obtain the identity
\begin{equation*}
\langle \hat{\aaa}^i,  \psi \rangle_{H^{-2}, H^2_0}  = \int_0^T \left\langle \tfrac{c'(u^i)}{c(u^i) \gamma(u^i)} r^i, \psi \right\rangle_{H^{-\alpha}, H^\alpha} \, \ud s -\int_0^T \int_\T  \tfrac{q \psi c'(u^i) }{2c(u^i) \gamma(u^i)}   
 \, \ud x \, \ud s \geqslant 0,
\end{equation*}
for any non-negative $\psi \in C^\infty_c((0,\infty) \times \T)$. We conclude that, in the sense of distributions, 
\begin{equation}\label{DM2}
  c'(u^i) r^i - \tfrac{q c'(u^i)}{2} = c(u^i) \gamma(u^i) \hat{\aaa}^i \geqslant 0,
\end{equation}
where $\hat{\aaa}^i$ is defined in \eqref{DM}.

\begin{pro}\label{prop:Identification}
 For $i \in \{1,2\}$, $\tilde{\Pro}$-almost surely we have
\begin{equation}\label{trivial defect measure}
c'(u^i) r^i = \tfrac{q c'(u^i)}{2} , \qquad  \hat{\aaa}^i =0, \qquad \mathrm{in}\ \mathcal{D}'((0,T) \times \T).
\end{equation}
\end{pro}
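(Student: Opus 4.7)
The plan is to combine the distributional lower bound \eqref{DM2} with a matching upper bound extracted from the one-sided energy estimate \eqref{ene_mu02}. If I can show that $c'(u^i) r^i \leqslant \tfrac12 q\, c'(u^i)$ holds in $\mathcal{D}'((0,T)\times\T)$, then \eqref{DM2} forces equality, hence $c(u^i)\gamma(u^i)\hat{\aaa}^i = 0$, and $\hat{\aaa}^i = 0$ follows at once from the strict positivity $c\geqslant c_1>0$ and $\gamma\geqslant \gamma_1>0$ in \eqref{coeff-c}-\eqref{coeff-gammaf}.

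The first step is to transfer \eqref{ene_mu02} to the Skorokhod space. Since $r^i_k = \mathscr{S}^i_k\mathscr{R}^i_k$ is a measurable function of $(\mathscr{R}^i_k,\mathscr{S}^i_k)$, which has the same joint law as $(\mathscr{R}^{\mu_k^i},\mathscr{S}^{\mu_k^i})$, and since $q$ is deterministic, \eqref{ene_mu02} transcribes as
\begin{equation*}
\lim_{k\to\infty}\tilde{\E}\left(\int_0^T \int_\T (2 r^i_k - q)\psi\, \ud x\, \ud t\right)^+ = 0
\end{equation*}
for every non-negative $\psi \in C^2_c((0,T)\times\T)$. Next I would pass to the limit $k\to\infty$: the a.s. convergences $\mathscr{S}^i_k\to \mathscr{S}^i$ in $\R$ and $\mathscr{R}^i_k\rightharpoonup\mathscr{R}^i$ in the weak topology of $\BBB$ combine to give $r^i_k\rightharpoonup r^i$ in $L^2([0,T];H^{-\alpha}(\T))$, $\tilde{\Pro}$-a.s. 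Pairing with $\psi\in C^2_c((0,T)\times\T)\subset L^2([0,T];H^{\alpha}(\T))$ then yields
\begin{equation*}
\int_0^T \int_\T (2 r^i_k - q)\psi\, \ud x\, \ud t \longrightarrow \langle 2 r^i - q,\psi\rangle \qquad \tilde{\Pro}\text{-a.s.}
\end{equation*}

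The positive-part functional is lower semicontinuous, so Fatou's lemma combined with the previous display gives $\tilde{\E}\langle 2 r^i - q,\psi\rangle^+ = 0$, hence $\langle 2 r^i - q,\psi\rangle\leqslant 0$ $\tilde{\Pro}$-a.s. A standard countable-density argument over the positive cone of $C^2_c((0,T)\times\T)$ upgrades this to a single null set outside of which $2 r^i - q\leqslant 0$ in $\mathcal{D}'((0,T)\times\T)$. Because $r^i$ is simultaneously a non-negative distribution (as the weak limit of the non-negative functions $r^{\mu_k^i}$), it extends to a non-negative Radon measure absolutely continuous with respect to Lebesgue measure, with density bounded by $q/2$. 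Multiplication by the non-negative measurable function $c'(u^i)$ then produces the pointwise a.e. inequality $c'(u^i) r^i \leqslant \tfrac12 q\, c'(u^i)$, which yields the desired distributional upper bound; together with \eqref{DM2} this pinches the equality $c'(u^i) r^i = \tfrac12 q\, c'(u^i)$ in $\mathcal{D}'$, and the remaining assertion $\hat{\aaa}^i=0$ follows by the positivity argument described above.

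The main delicate point I expect is the a.s. passage $\int(2 r^i_k - q)\psi \to \langle 2 r^i - q,\psi\rangle$: although $\mathscr{R}^i_k$ converges only weakly, the scalar factor $\mathscr{S}^i_k$ converges strongly in $\R$, which is what makes the product a genuine weak limit and allows the pairing with $\psi$ to go through. Beyond this, the argument is structurally very clean: the one-sided nature of \eqref{ene_mu02} provides exactly the inequality opposite to the non-negativity of the defect measure in \eqref{DM2}, so no further compensated-compactness machinery is needed.
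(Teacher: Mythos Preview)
Your proposal is correct and follows essentially the same route as the paper: transfer \eqref{ene_mu02} to the Skorokhod space by equality of laws, pass to the limit using the a.s. weak convergence $r^i_k\rightharpoonup r^i$ together with Fatou's lemma to obtain $r^i\leqslant q/2$ in $\mathcal{D}'$, and then confront this with \eqref{DM2}. You are in fact slightly more careful than the paper on two points it leaves implicit: the countable-density argument needed to obtain $2r^i-q\leqslant 0$ on a single full-measure set, and the observation that $0\leqslant r^i\leqslant q/2$ forces $r^i\in L^\infty$, which is what justifies multiplying the inequality by the merely measurable factor $c'(u^i)$ before comparing with \eqref{DM2}.
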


\begin{proof}[Proof of Proposition~\ref{prop:Identification}]
Using \eqref{ene_mu02} and the equality of laws, we obtain for any $i\in \{ 1,2\}$ that 
\begin{equation*}
	\lim_k \E \left( \int_0^T \int_\T \left( 2 \mu \gamma(u^i_k) (\partial_t u^i_k)^2 - q \right) \psi \, \ud x\,  \ud t \right)^+ = 0.
\end{equation*}
Therefore, up to a subsequence, we have $\tilde{\Pro}$-almost-surely, 
\begin{equation*}
\left\langle r^i, \psi\right\rangle_{L^2 H^{-\alpha}, L^2 H^\alpha} =	\lim_k   \int_0^T \int_\T  r^i_k  \psi \, \ud x\,  \ud t  \leqslant \half \int_0^T \int_\T q\psi \, \ud x\,  \ud t.
\end{equation*}
This implies that $r^i \leqslant q/2$ in the sense of distributions. Using now \eqref{DM2}, we conclude to \eqref{trivial defect measure}.
\end{proof}

	We can now exploit the identity \eqref{trivial defect measure} in \eqref{underlinerho} to state the following result.
	
\begin{pro}\label{prop:limit equation on p}
	Both $\imp^1$ and $\imp^2$ are solutions, in the sense of Definition~\ref{def:weak sol limit equation p}, to the quasi-linear stochastic equation \eqref{limeq version rhoc}, with initial datum $\imp_0=\underline{\Gamma}(u_0)$.
\end{pro}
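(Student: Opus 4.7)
The plan is to start from the equation \eqref{underlinerho} already derived for $\imp^i$ in Section~\ref{subsubsec:limit equations}, substitute the identification \eqref{trivial defect measure} from Proposition~\ref{prop:Identification}, and then match the resulting expression term by term with the weak form of \eqref{limeq version rhoc} dictated by the definitions of $b$, $F$, $\Psi$. First I will verify the integrability requirement in Definition~\ref{def:weak sol limit equation p}: since $\underline{\Gamma}'=\gamma/c$ is bounded above and below by positive constants under \eqref{coeff-c}--\eqref{coeff-gammaf}, and since $u^i=\underline{\Gamma}^{-1}(\imp^i)$, the bound \eqref{bound pi H1} together with the $L^2_tH^1_x$ control on $u^i$ transferred from \eqref{main_estimates2} yields $\imp^i\in L^2(\Omega\times[0,T];H^1(\T))$, as required. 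Adaptedness to the augmented filtration is inherited from the Skorokhod construction of Section~\ref{subsubsec:CV}.

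Next I would exploit \eqref{trivial defect measure}, which says that $c'(u^i)r^i=qc'(u^i)/2$ in $\mathcal{D}'((0,T)\times\T)$, $\tilde{\Pro}$-a.s. Since $u^i\in L^2([0,T];H^1(\T))\hookrightarrow L^2([0,T];L^\infty(\T))$ and $c$, $c'$, $\gamma$ are smooth with $c$ bounded below by $c_1>0$, Proposition~\ref{prop:smooth operate} shows that $(c(u^i)^2\gamma(u^i))^{-1}c'(u^i)$ times any $\psi\in C^\infty_c((0,T)\times\T)$ belongs to $L^2([0,T];H^\alpha(\T))$ for every $\alpha<1$, so the distributional identity may be tested against $\psi/(c(u^i)^2\gamma(u^i))$ inside the $H^{-\alpha},H^\alpha$ duality appearing in \eqref{underlinerho}. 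This rewrites \eqref{underlinerho} as
\begin{equation*}
-\int_0^T\!\!\int_\T \psi_t\,\imp^i\, \ud x\, \ud s
+\int_0^T\!\!\int_\T \tfrac{q\,c'(u^i)}{2\,c(u^i)^2\gamma(u^i)}\,\psi\, \ud x\, \ud s
=\int_0^T\!\!\int_\T \psi_{xx}\mathcal{C}(u^i)\, \ud x\, \ud s + \int_0^T\!\!\int_\T \tfrac{f(u^i)}{c(u^i)}\psi\, \ud x\, \ud s +\int_0^T\!\!\int_\T \tfrac{\psi}{c(u^i)}\Phi\, \ud x\, \ud \tilde{W}(s).
\end{equation*}
Integration by parts converts $\int \psi_{xx}\mathcal{C}(u^i)\ud x$ into $-\int \psi_x\,c(u^i)u^i_x\,\ud x$, and the computation $b(\imp^i)\imp^i_x=(c^2/\gamma)(u^i)\cdot(\gamma/c)(u^i)u^i_x=c(u^i)u^i_x$ together with the explicit form of $F(x,\cdot)$ and $\Psi$ shows that this is exactly the space-time weak form of \eqref{limeq version rhoc} with initial datum $\imp_0=\underline{\Gamma}(u_0)$.

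It remains to upgrade the space-time weak formulation to the pointwise-in-time formulation of Definition~\ref{def:weak sol limit equation p}. Specializing $\psi(t,x)=\theta(t)\varphi(x)$ with $\varphi\in C^\infty(\T)$ and $\theta\in C^1_c([0,T))$, the identity above forces the real-valued process
\begin{equation*}
t\mapsto \dual{\imp^i(t)}{\varphi}_{L^2(\T)}-\dual{\imp_0}{\varphi}_{L^2(\T)}
+\int_0^t\bigl(\dual{b(\imp^i)\imp^i_x}{\varphi_x}+\dual{F(\cdot,\imp^i)}{\varphi}\bigr)\ud s-\int_0^t \dual{\Psi(\imp^i)}{\varphi}\ud\tilde{W}(s)
\end{equation*}
to vanish for a.e.\ $t$, $\tilde{\Pro}$-a.s. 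Each of the last three integrals defines a continuous function of $t$ (the deterministic ones by Cauchy--Schwarz using the bounds in $L^2_tH^1_x$ and $L^2_{t,x}$, the stochastic one by the continuity of the It\^o integral in $L^2$), so continuity in $t$ of $\dual{\imp^i(t)}{\varphi}$ in an a.e.\ sense is available, and enhancing this to an everywhere identity $\tilde{\Pro}$-a.s.\ for a countable dense family of $\varphi$'s (say trigonometric polynomials), and then for all $\varphi\in C^\infty(\T)$ by density, yields the pointwise formulation.

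The main obstacle I expect is precisely the substitution step: the factor $r^i$ only lives in $L^2([0,T];H^{-\alpha}(\T))$ and is multiplied by the nonlinear function $c'(u^i)/(c(u^i)^2\gamma(u^i))$ of a merely $L^2_tH^1_x$ process. Justifying the use of the pointwise-in-distribution identity \eqref{trivial defect measure} inside the $H^{-\alpha},H^\alpha$ pairing rests on the $H^1\hookrightarrow L^\infty$ embedding in one space dimension and the tame estimates of Proposition~\ref{prop:smooth operate}; once this is in place, the remaining steps are routine identifications of coefficients and a standard time-continuity argument.
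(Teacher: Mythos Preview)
Your proposal is correct and follows exactly the route the paper indicates (the paper gives no detailed proof, only the one-line remark ``We can now exploit the identity \eqref{trivial defect measure} in \eqref{underlinerho}''): you substitute \eqref{trivial defect measure} into \eqref{underlinerho}, identify $b(\imp^i)\imp^i_x=c(u^i)u^i_x$, $F(x,\imp^i)=\tfrac{f(u^i)}{c(u^i)}-\tfrac{q\,c'(u^i)}{2c(u^i)^2\gamma(u^i)}$, $\Psi(\imp^i)=\Phi/c(u^i)$, and then pass from the space--time weak form to the pointwise-in-$t$ form of Definition~\ref{def:weak sol limit equation p} using $\imp^i\in\mathcal{X}\subset C([0,T];H^{-\delta}(\T))$ together with the continuity of the drift and stochastic integrals. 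Your identification of the one genuine technical point --- that \eqref{trivial defect measure} is stated in $\mathcal{D}'$ but must be used against the non-smooth test function $\psi/(c(u^i)^2\gamma(u^i))$ via the $H^{-\alpha},H^\alpha$ pairing, which is justified by Proposition~\ref{prop:smooth operate} and density --- is exactly right.
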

Using the It\^o formula in Proposition \ref{Proposition:Ito1}, we obtain the following. 
\begin{pro}\label{prop:limit equation on u}
	Both $u^1$ and $u^2$ are solutions, in the sense of Definition~\ref{def:weak sol limit equation u}, to the quasi-linear stochastic equation \eqref{limeq version u}, with initial datum $u_0$.
\end{pro}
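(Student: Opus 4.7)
The plan is to deduce Proposition~\ref{prop:limit equation on u} from Proposition~\ref{prop:limit equation on p} by applying the It\^o formula of Proposition~\ref{Proposition:Ito1} to the composition $u^i = \underline{\Gamma}^{-1}(\imp^i)$. Set $G \eqdef \underline{\Gamma}^{-1}$. Since $\underline{\Gamma}'(u) = \gamma(u)/c(u) \geqslant \gamma_1/c_2 > 0$ and is smooth with bounded derivatives (by \eqref{coeff-c}, \eqref{coeff-gammaf}, \eqref{cgamma}), $G$ is a globally well-defined, smooth function with bounded first and second derivatives. A direct computation gives
\begin{equation*}
  G'(\imp)=\tfrac{c(u)}{\gamma(u)},\qquad G''(\imp)=\tfrac{c(u)c'(u)\gamma(u)-c(u)^2\gamma'(u)}{\gamma(u)^3},\qquad u=G(\imp),
\end{equation*}
and the required regularity on $\imp^i$ (namely $\imp^i\in L^2(\Omega\times[0,T];H^1(\T))$ together with the semimartingale decomposition \eqref{limeq version rhoc}) is available from the identification carried out in Section~\ref{subsubsec:limit equations} and the uniform bound~\eqref{main_estimates2}.

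Applying Proposition~\ref{Proposition:Ito1} to $u^i=G(\imp^i)$ using \eqref{limeq version rhoc} yields
\begin{equation*}
\ud u^i = G'(\imp^i)\bigl[(b(\imp^i)\imp^i_x)_x\,\ud t + F(\cdot,\imp^i)\,\ud t\bigr] + \tfrac12 G''(\imp^i)\sum_{k\geqslant 1}|\Psi(\imp^i) g_k|^2\,\ud t + G'(\imp^i)\Psi(\imp^i)\,\ud W.
\end{equation*}
Using the identities $b(\imp^i)\imp^i_x = c(u^i)u^i_x$ and $\Psi(\imp^i) g_k = \sigma_k/c(u^i)$ (so that the trace term is $q(x)/c(u^i)^2$), and inserting the formulas for $F$, $G'$, $G''$, I expect the Ito correction $\frac12 G''(\imp^i) q/c(u^i)^2 = \frac{qc'(u^i)}{2\gamma(u^i)^2 c(u^i)} - \frac{q\gamma'(u^i)}{2\gamma(u^i)^3}$ to exactly cancel the $-qc'(u^i)/(2\gamma(u^i)^2 c(u^i))$ term coming from $G'(\imp^i)F(\cdot,\imp^i)$, leaving
\begin{equation*}
\ud u^i = \tfrac{c(u^i)}{\gamma(u^i)}(c(u^i) u^i_x)_x\,\ud t + \tfrac{f(u^i)}{\gamma(u^i)}\,\ud t - \tfrac{\gamma'(u^i)\,q}{2\gamma(u^i)^3}\,\ud t + \tfrac{1}{\gamma(u^i)}\Phi\,\ud \tilde{W},
\end{equation*}
which is exactly \eqref{limeq version u}.

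To finish, I would test this identity against an arbitrary $\varphi \in C^\infty(\T)$ and perform the standard integration by parts on the torus,
\begin{equation*}
\int_\T \tfrac{c(u^i)}{\gamma(u^i)}(c(u^i) u^i_x)_x\,\varphi\,\ud x = -\int_\T c(u^i)u^i_x\left(\tfrac{c(u^i)}{\gamma(u^i)}\varphi\right)_x \ud x,
\end{equation*}
to rewrite the drift in the form prescribed by Definition~\ref{def:weak sol limit equation u}. The initial datum $u^i(0,\cdot)=u_0$ is inherited from $\imp^i(0,\cdot)=\underline{\Gamma}(u_0)$ (itself a consequence of the equality of laws in Section~\ref{subsubsec:CV}) and the bijectivity of $\underline{\Gamma}$. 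The only real ``obstacle'' is verifying that Proposition~\ref{Proposition:Ito1} applies to the pair $(\imp^i, G)$, but this follows from the smoothness and boundedness of the derivatives of $G$ together with the bounds already collected, so the proof is essentially a careful bookkeeping of the It\^o correction.
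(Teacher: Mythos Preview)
Your proposal is correct and follows exactly the approach indicated in the paper, which simply says ``Using the It\^o formula in Proposition~\ref{Proposition:Ito1}, we obtain the following''; you have faithfully filled in the algebraic details (the cancellation of the $qc'/(2\gamma^2 c)$ terms between $G'(\imp^i)F$ and the It\^o correction is precisely the point) and correctly identified the hypotheses needed to invoke Proposition~\ref{Proposition:Ito1}.
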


\subsubsection{Final steps of the proof of convergence}\label{sec:uniquness}
	Following the arguments in \cite[Theorem 3.1]{HofmanovaZhang17} and \cite[Theorem 6.2]{CerraiXi22}, solutions to \eqref{limeq version rhoc} are unique, so $\imp^1=\imp^2$. By the Gy\"ongy--Krylov argument, \cite[Lemma 1.1]{GyongyKrylov96}, we deduce the convergence in probability $\imp^{\mu_k}\to \imp$ in $\mathcal{X}$, which (due to Proposition \ref{pro:Lipconv}) gives is the convergence property \eqref{CV in proba umu} given in Theorem~\ref{thm:SK}. To identify the limit $u$ and establish \eqref{limeq version u} we just need to repeat the arguments employed for the doubled variable in Sections~\ref{subsubsec:CV} to \ref{subsubsec:identification defect measures}.

\appendix

\section{Reformulation of the problem}\label{app:equiv fomrulations}

\paragraph{From $u^\mu$ to $(R^\mu,S^\mu)$.} Let $u^\mu$ be a smooth solution to \eqref{SVWE1}. Let $R^\mu$ and $S^\mu$ be defined by \eqref{defRS}. One has then the two identities
\begin{equation}\label{ux by RS}
	u^\mu_x=\tfrac{S^\mu-R^\mu}{2c(u^\mu)},
\end{equation} 
and
\begin{equation}\label{ut by RS}
	u^\mu_t=\tfrac{S^\mu+R^\mu}{2\sqrt{\mu}}.
\end{equation} 
By differentiation in time and in space in the equation $R^\mu  = \sqrt{\mu} u^\mu_t  -  c(u^\mu)  u^\mu_x$, 
	we obtain the identities
\begin{equation}\label{first dR}
	\sqrt{\mu}\, \ud R^\mu = \mu\, \ud u^\mu_t - \sqrt{\mu}\, c'(u^\mu)u^\mu_t u^\mu_x\, \ud t - \sqrt{\mu}\, c(u^\mu)  u^\mu_{xt} \, \ud t,
\end{equation}
and
\begin{equation}\label{dx R}
	c(u^\mu)  R^\mu_x=\sqrt{\mu} c(u^\mu) u^\mu_{tx}-c(u^\mu)\left(c(u^\mu) u^\mu_x \right)_x.
\end{equation}
By use of the equation \eqref{SVWE1} satisfied by $u^\mu$, \eqref{first dR} and \eqref{dx R} give
\begin{equation}\label{transport R}
	\sqrt{\mu}\, \ud R^\mu  +  c(u^\mu)  R^\mu_x\,  \ud t   +  \gamma(u^\mu) u^\mu_t
	= - \sqrt{\mu}\, c'(u^\mu)u^\mu_t u^\mu_x\, \ud t+  f(u^\mu)\, \ud t + \Phi \,  \ud W.
\end{equation}
We exploit the identities \eqref{ux by RS}-\eqref{ut by RS} in \eqref{transport R} to obtain the equation \eqref{Req} satisfied by $R^\mu$. The procedure is similar for $S^\mu$, and thus
\begin{subequations}\label{SVWE2333}
	\begin{gather}\label{Req333}
		\sqrt{\mu}\, \ud R^\mu  +  c(u^\mu)  R^\mu_x\,  \ud t   +  \gamma(u^\mu) \tfrac{R^\mu+S^\mu}{2\sqrt{\mu}}\, \ud t =  \tilde{c}'(u^\mu) \left[(R^\mu)^2  -  (S^\mu)^2 \right] \ud t  +  f(u^\mu)\, \ud t + \Phi \,  \ud W, \\  \label{Seq333}
		\sqrt{\mu}\, \ud S^\mu  -  c(u^\mu)  S^\mu_x\,  \ud t   +  \gamma(u^\mu) \tfrac{R^\mu+S^\mu}{2\sqrt{\mu}}\, \ud t =  \tilde{c}'(u^\mu) \left[(S^\mu)^2  -  (R^\mu)^2 \right] \ud t  + f(u^\mu)\, \ud t +  \Phi \,  \ud W.
	\end{gather}
\end{subequations}
Using \eqref{ux by RS} again, the system \eqref{SVWE2333} can be rewritten in conservative form as
\begin{subequations}\label{SVWE2 cons}
	\begin{align}
		\sqrt{\mu}\, \ud R^\mu  +  \left(c(u^\mu)  R^\mu\right)_x  \ud t   +&  \gamma(u^\mu) \tfrac{R^\mu+S^\mu}{2\sqrt{\mu}}\, \ud t \nonumber\\
		&=  \tilde{c}'(u^\mu) \left[(R^\mu)^2  -  (S^\mu)^2 +2R^\mu(R^\mu+S^\mu)\right] \ud t  +  f(u^\mu)\, \ud t + \Phi \,  \ud W,\label{Req cons} \\  
		\sqrt{\mu}\, \ud S^\mu  -  \left(c(u^\mu)  S^\mu\right)_x  \ud t   +&  \gamma(u^\mu) \tfrac{R^\mu+S^\mu}{2\sqrt{\mu}}\, \ud t \nonumber\\
		&=  \tilde{c}'(u^\mu) \left[(S^\mu)^2  -  (R^\mu)^2 -2S^\mu(R^\mu+S^\mu)\right] \ud t  + f(u^\mu)\, \ud t +  \Phi \,  \ud W, \label{Seq cons}
	\end{align}
\end{subequations}
In particular, subtracting \eqref{Req cons} from \eqref{Seq cons} yields
\begin{equation}\label{S minus R eq}
	\sqrt{\mu} \left(S^\mu-R^\mu\right)_t- \left(c(u^\mu)  (R^\mu+S^\mu)\right)_x=0.
\end{equation}
The map $(R^\mu,S^\mu)\mapsto u^\mu$ given by \eqref{udef} is obtained as follows. We write first Equation~\eqref{ux by RS}, under the form
\begin{equation}\label{ux by RS C}
	 \left(\mathcal{C}(u^\mu)\right)_x=\tfrac{S^\mu-R^\mu}{2}.
\end{equation}
In the periodic setting that we consider, a necessary and sufficient condition to the solvability of \eqref{ux by RS C} is that
\begin{equation}\label{NSC int ux}
	\int_0^1 \tfrac{S^\mu-R^\mu}{2}(t,y)\, \ud y=0,
\end{equation}
for all $t\geqslant 0$. The equation \eqref{NSC int ux} is satisfied at $t=0$ by the choice \eqref{IC} of the initial data. It remains true for all $t\geqslant 0$, as can be seen by differentiation in time, using \eqref{S minus R eq}. We can therefore integrate \eqref{ux by RS C} to obtain
\begin{equation}\label{to udef v}
	\mathcal{C}(u^\mu(t,x))=\mathcal{C}(\bar{v}^\mu(t))+ \int_0^x {\textstyle \frac{S^\mu  -  R^\mu}{2}}(t,y) \, \ud y,
\end{equation}
for a given function $\bar{v}^\mu(t)$. To determine $\bar{v}^\mu(t)$, we take $x=0$ in \eqref{to udef v}, which yields $\bar{v}^\mu(t)=u(t,0)$. Next we exploit the identity \eqref{ut by RS} at $x=0$ to get the expression
\begin{equation*}
	\bar{v}^\mu(t)=\int_0^t {\textstyle \left(\frac{R^\mu  +  S^\mu}{2 \sqrt{\mu}}  \right) (s,0)} \, \ud s  +  u_0(0).
\end{equation*}

\paragraph{From $(R^\mu,S^\mu)$ to $u^\mu$.} Assume now that $(R^\mu,S^\mu)$ is solution to \eqref{SVWE2} and that $u^\mu$ is defined by \eqref{udef}. Our aim is to establish \eqref{ux by RS} and \eqref{ut by RS}, since then the equation \eqref{SVWE1} on $u^\mu$ is obtained by addition of the equation \eqref{Req} satisfied by $R^\mu$ with the equation \eqref{Seq} on $S^\mu$. By differentiation in $x$ in \eqref{udef}, we obtain first \eqref{ux by RS}. We can therefore use \eqref{ux by RS} to deduce from \eqref{SVWE2} the conservative form \eqref{SVWE2 cons}, and thus \eqref{S minus R eq}. Next, define $\bar{v}^\mu(t)$, so that \eqref{udef} writes as \eqref{to udef v}. Taking $x=0$, we have then $\bar{v}^\mu(t)=u^\mu(t,0)$. By differentiation in time, we also get
\begin{equation*}
c(u^\mu(t,x))u^\mu_t(t,x)=c(u^\mu(t,0))u^\mu(t,0)+\int_0^x {\textstyle \frac{S^\mu_t  -  R^\mu_t}{2}}(t,y) \, \ud y.
\end{equation*}
We integrate \eqref{S minus R eq} on $(0,x)$ to finally obtain \eqref{ut by RS}.

\section{It\^o formula}\label{app:Ito}

Our aim is to prove the It\^o formula for 
\begin{equation*}
	\ud u = F \, \ud t + \ud M,
\end{equation*}
which can be written in the form 
\begin{equation}\label{Ito1form1}
-\int_0^T \int_\T \psi_t  u  \, \ud x\, \ud s = \int_0^T \langle F, \psi \rangle_{H^{-1}, H^1} \, \ud s + \sum_{k \geqslant 1} \int_0^T \int_\T \sigma_k(x) H \psi \, \ud x\,  \ud \beta_k(s),
\end{equation}
for any $\psi \in C^2_c((0,T) \times \T)$, or 
\begin{equation}\label{Ito1form2}
\langle (u(t)-u_0), \varphi  \rangle_{H^{-1},H^1} = \int_0^t \langle F, \varphi \rangle_{H^{-1}, H^1} \, \ud s + \sum_{k \geqslant 1} \int_0^t \int_\T \sigma_k(x) H \varphi \, \ud x\,  \ud \beta_k(s),
\end{equation}
for any $\varphi \in C^2(\T)$.
We assume here that $(\beta_1(t),\beta_2(t),\dotsc)$ are independent one-dimensional Wiener processes, and (as in \eqref{defq}) that
\begin{equation*}
 \sum_k \|\sigma_k\|_{L^2(\T)}^2< \infty .
\end{equation*}

\begin{proposition}\label{Proposition:Ito1}
Assume \eqref{Ito1form1} or \eqref{Ito1form2} with 
\begin{equation*}
u \in L^2(\Omega; C([0,T]; H^{-1}(\T))) \cap L^2(\Omega \times [0,T]; H^1(\T)),
\end{equation*}
and $H$ is predictable, and
\begin{equation*}
F \in L^2(\Omega \times [0,T]; H^{-1}(\T)), \qquad  H \in L^\infty (\Omega \times [0,T] \times \T).
\end{equation*}
Then, if $\psi \in C^2((0,T) \times \T)$ and $\Psi \in C^2(\R)$ satisfies
\begin{equation*}
|\Psi'(u)| + |\Psi''(u)| \leqslant C \quad \forall u \in \R,
\end{equation*}
then we have $\Psi(u)  \in L^2(\Omega; C([0,T]; L^2(\T)))$ and 
\begin{align}\nonumber
-\int_0^T \int_\T \Psi(u)  \psi_t\, \ud x\, \ud s &= \int_0^T \int_\T \Psi'(u) F \psi\, \ud x\, \ud s \\ \nonumber
& \quad + \sum_{k \geqslant 1} \int_0^T \int_\T \sigma_k(x) H \psi \Psi'(u)  \, \ud x\,  \ud \beta_k(s)\\ \nonumber
& \quad + \half  \sum_{k \geqslant 1} \int_0^T \int_\T \psi \Psi''( u)  \sigma_k^2 H^2\, \ud x\, \ud s.
\end{align}
Moreover, for any $\varphi \in C^2(\T)$ and $t \in [0,T]$ we have 
\begin{align}\nonumber
\int_\T \left(\Psi(u(t))-\Psi(u_0) \right) \varphi\, \ud x\, \ud s &= \int_0^t \int_\T \Psi'(u) F \varphi\, \ud x\, \ud s \\ \nonumber
& \quad + \sum_{k \geqslant 1} \int_0^t \int_\T \sigma_k(x) H \varphi \Psi'(u)  \, \ud x\,  \ud \beta_k(s)\\ \nonumber
& \quad + \half  \sum_{k \geqslant 1} \int_0^t \int_\T \varphi \Psi''( u)  \sigma_k^2 H^2\, \ud x\, \ud s.
\end{align}
\end{proposition}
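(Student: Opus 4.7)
The approach will be a standard spatial mollification argument. Let $\varrho_\eps$ be an even smooth mollifier on $\T$, and write $g_\eps \eqdef g \ast \varrho_\eps$ for any distribution $g$. Testing the weak identity \eqref{Ito1form2} with the shifted mollifier $y \mapsto \varrho_\eps(x - y)$ will show that, for each fixed $x \in \T$, the process $t \mapsto u_\eps(t,x)$ is a real-valued continuous semimartingale with decomposition
\begin{equation*}
	u_\eps(t,x) = u_{0,\eps}(x) + \int_0^t F_\eps(s,x) \, \ud s + \sum_{k \geqslant 1} \int_0^t (\sigma_k H)_\eps(s,x) \, \ud \beta_k(s),
\end{equation*}
where $(\sigma_k H)_\eps(s,x)$ stands for the spatial convolution of $y\mapsto \sigma_k(y)H(s,y)$ with $\varrho_\eps$. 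The assumptions $\|H\|_{L^\infty} < \infty$ and $\sum_k \|\sigma_k\|_{L^2(\T)}^2 < \infty$ ensure convergence of the series and the validity of this pointwise decomposition.

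Next, we will apply the classical one-dimensional It\^o formula to $\Psi(u_\eps(t,x))$ for each $x$, multiply by the test function $\varphi(x)$ (or $\psi(t,x)$), and integrate over $\T$ --- exchanging $\ud x$ and $\ud\beta_k$ via the stochastic Fubini theorem --- to obtain the mollified version of the target identity, with integrands $\Psi'(u_\eps) F_\eps$, $\Psi'(u_\eps)(\sigma_k H)_\eps$ and $\tfrac12 \Psi''(u_\eps)(\sigma_k H)_\eps^2$.

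The final step is to pass to the limit $\eps \to 0$. The regularity $u \in L^2(\Omega\times[0,T]; H^1(\T))$ gives $u_\eps \to u$ in that space and a.e.\ along a subsequence; combined with $|\Psi'|, |\Psi''| \leqslant C$ and the Sobolev chain rule, this yields $\Psi'(u_\eps)\varphi \to \Psi'(u)\varphi$ in $L^2(\Omega\times[0,T]; H^1(\T))$. For the drift term, the evenness of $\varrho_\eps$ allows the rewriting
\begin{equation*}
	\int_\T \Psi'(u_\eps) F_\eps \varphi \, \ud x = \langle F, (\Psi'(u_\eps) \varphi)_\eps \rangle_{H^{-1}, H^1},
\end{equation*}
whose convergence to $\langle F, \Psi'(u)\varphi \rangle_{H^{-1},H^1}$ is then immediate. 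The quadratic-variation term passes to the limit by dominated convergence, its integrand being dominated by $C \|\Psi''\|_\infty \|H\|_\infty^2 q(x) \|\varphi\|_\infty$ where $q$ is as in \eqref{defq}. The main obstacle is the stochastic integral: the Burkholder--Davis--Gundy inequality reduces the task to showing
\begin{equation*}
	\E \int_0^T \sum_k \left( \int_\T \bigl[ \Psi'(u_\eps)(\sigma_k H)_\eps - \Psi'(u) \sigma_k H \bigr] \varphi \, \ud x \right)^2 \ud s \longrightarrow 0,
\end{equation*}
which will be handled by splitting the bracket as $[\Psi'(u_\eps)-\Psi'(u)](\sigma_k H)_\eps + \Psi'(u)[(\sigma_k H)_\eps - \sigma_k H]$ and applying dominated convergence term-by-term, using the bounds on $\Psi'$, the a.e.\ convergence of $u_\eps$, and the $L^2$-continuity of the convolution. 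The uniform $\ell^2$-in-$k$ control of this commutator --- precisely where the $L^\infty$ hypothesis on $H$ is needed --- is the crux of the argument. The pointwise-in-time statement and the continuity $\Psi(u) \in C([0,T]; L^2(\T))$ then follow by the same limiting procedure.
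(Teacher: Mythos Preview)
Your mollification strategy is exactly the one the paper uses: mollify in space, apply the scalar It\^o formula pointwise in $x$, integrate against the test function, and pass to the limit. Your treatment of the limit in the three terms (drift via the transposed convolution, stochastic integral via BDG and splitting, correction via dominated convergence) is correct and in fact more explicit than the paper, which simply cites \cite[Appendix]{DebusscheHofmanovaVovelle16} for that step.

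There is however one point where you are too brief. The conclusion $\Psi(u)\in L^2(\Omega;C([0,T];L^2(\T)))$ does \emph{not} follow ``by the same limiting procedure''. Your limit identifies the It\^o identity for a.e.\ $t$ (since $u_\eps(t)\to u(t)$ in $L^2(\T)$ only for those $t$ where $u(t)\in H^1(\T)$), and the continuity of the right-hand side then upgrades this to all $t$, but only in the weak sense: you obtain $\Psi(u)\in C([0,T];L^2(\T)\mbox{-weak})$. Strong continuity in $L^2$ requires a separate step. The paper devotes two intermediate steps to this: it first takes $\Psi(v)=v^2$ in the mollified identity and shows that the real-valued processes $X^\eps(t)=\|u_\eps(t)\|_{L^2}^2$ are relatively compact in $C([0,T])$ (the drift part is equicontinuous by absolute continuity, the martingale part converges in $C([0,T])$ via Doob's inequality), so that $t\mapsto\|u(t)\|_{L^2}^2$ is continuous. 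It then combines norm continuity with the assumed weak continuity $u\in C([0,T];H^{-1}(\T))$ through the expansion
\[
\tfrac12\|u(t)-u(s)\|_{L^2}^2=\bigl(\tfrac12\|u(s)\|_{L^2}^2-\tfrac12\|u(t)\|_{L^2}^2\bigr)+\langle u(t),u(t)-u(s)\rangle
\]
to obtain $u\in C([0,T];L^2(\T))$; the Lipschitz bound $|\Psi'|\leqslant C$ then gives $\Psi(u)\in C([0,T];L^2(\T))$. This intermediate argument, and the need to apply the mollified It\^o formula once with the unbounded $\Psi(v)=v^2$, is what your sketch is missing. You also did not address how to pass from the distributional-in-time formulation~\eqref{Ito1form1} to the pointwise one~\eqref{Ito1form2}; the paper does this in a preliminary Step~0 by testing against $\tilde\psi^\delta(s)\varphi(x)$ with $\tilde\psi^\delta$ an approximation of $\mathbf 1_{[0,t]}$ and letting $\delta\to 0$.
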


\begin{proof}[Proof of Proposition~\ref{Proposition:Ito1}] 

\textbf{Step 0.} 
We start by proving that \eqref{Ito1form1} implies \eqref{Ito1form2}.
Let $t \in (0,T)$, since $u \in C([0,T]; H^{-1}(\T)) $ almost surely, we replace $\psi(s,x)$ by $ \tilde{\psi}^\delta(s) \varphi(x)$ where $\tilde{\psi}^\delta \in C^2_c(0,t)$, $\varphi \in C^2(\T)$  and 
\begin{equation*}
	\tilde{\psi}^\delta= 1\ \text{in} \ (\delta,t-\delta), \qquad \tilde{\psi}^\delta_t \geqslant 0\ \text{in} \ (0, \delta), \qquad \tilde{\psi}^\delta_t \leqslant 0\ \text{in} \ (t-\delta,t).
\end{equation*}
Taking $\delta \to 0$ in \eqref{Ito1form1} we obtain \eqref{Ito1form2}.

\textbf{Step 1.} 
Let $j_\varepsilon$ be a Friedrichs mollifier on $\T$, replacing $\varphi$ by $ j_\varepsilon(y-\cdot)$ in \eqref{Ito1form2} we obtain 
\begin{equation*}
\ud u^\varepsilon = F^\varepsilon \, \ud t + \, \ud M^\varepsilon, 
\end{equation*}
where for any $f$, we have $f^\varepsilon = f \ast j_\varepsilon \eqdef \langle f, j_\varepsilon(y-\cdot) \rangle_{H^{-1},H^1}$ and 
\begin{equation*}
	M^\varepsilon (t) \eqdef  \sum_{k \geqslant 1} \int_0^t \left(  \sigma_k H \right)^\varepsilon  \ud \beta_k(t).
\end{equation*}
For any fixed $x \in \T$, we use the It\^o formula to obtain 
\begin{align}\label{ItoPsi}
\ud \Psi( u^\varepsilon) = \Psi'( u^\varepsilon) F^\varepsilon \, \ud t + \Psi'( u^\varepsilon)\,  \ud M^\varepsilon  + \half  \sum_{k \geqslant 1} \Psi''( u^\varepsilon) \left( \left(  \sigma_k H \right)^\varepsilon \right)^2  \ud t.
\end{align}
Using now the It\^o formula for the product, we obtain
\begin{align}\label{ItoPsi2}
\ud \Psi( u^\varepsilon) \psi
&= \Psi( u^\varepsilon) \psi_t  \ud t+ \psi \Psi'( u^\varepsilon) F^\varepsilon \, \ud t + \psi   \Psi'( u^\varepsilon)\,  \ud M^\varepsilon  + \half  \sum_{k \geqslant 1} \psi\Psi''( u^\varepsilon) \left( \left(  \sigma_k H \right)^\varepsilon \right)^2  \ud t.
\end{align}

\textbf{Step 2.} The aim of this step is to show that $(\|u^\varepsilon\|_{L^2})$ is relatively compact in $C([0,T])$.  Taking $\Psi(u)=u^2$ in \eqref{ItoPsi} we obtain 
\begin{equation}\label{decomposeX}
X^\varepsilon(t) = X^\varepsilon(0)+ A^\varepsilon(t) + \tilde{M}^\varepsilon(t),
\end{equation}
where 
\begin{gather*}
	X^\varepsilon(t) \eqdef \|u^\varepsilon(t)\|_{L^2(\T)}^2, \qquad A^\varepsilon(t) \eqdef \int_0^t \left(\int_\T F^\varepsilon u^\varepsilon \, \ud x + \sum_{k \geqslant 1} \left\| \left(  \sigma_k H \right)^\varepsilon \right\|_{L^2}^2 \right) \ud s, \\
	 \tilde{M}^\varepsilon(t) \eqdef  2   \sum_{k \geqslant 1} \int_0^t \int_\T u^\varepsilon \left(  \sigma_k H \right)^\varepsilon \ud x\,  \ud \beta_k(t).
\end{gather*}
Let $0 \leqslant s \leqslant t \leqslant T$, then 
\begin{equation}\label{Agamma}
	|A^\varepsilon(t) - A^\varepsilon(s)| \leqslant \int_s^t b(\sigma)\, \ud \sigma,
\end{equation}
where 
\begin{equation*}
 b(t) \eqdef \|u(t)\|_{H^1}\|F(t)\|_{H^{-1}}+ \| H(t)\|_{L^\infty}^2 \sum_{k \geqslant 1} \|  \sigma_k  \|_{L^2}^2.
\end{equation*}
Since 
\begin{equation}\label{gammabound}
		\E\left[\|b\|_{L^1(0,T)}\right]\leqslant \E\left[\int_0^T\left(\|u\|_{H^1}^2 +\|F\|_{H^{-1}}^2+ \| H(t)\|_{L^\infty}^2 \sum_{k\geqslant 1 } \|\sigma_k\|_{L^2}^2 \right) \ud t\right]<\infty,
\end{equation}
then $b \in L^1(0,T)$ almost surely, therefore $(A^\varepsilon)$ is equicontinuous. Since $A^\varepsilon (0)=0$, Ascoli's Theorem shows that $(A^\varepsilon)$ is relatively compact in $C([0,T])$. 

Next, consider
	\begin{equation*}
	 \tilde{M}(t) \eqdef  2   \sum_{k \geqslant 1} \int_0^t \int_\T u  \sigma_k H\, \ud x\,  \ud \beta_k(t).
	\end{equation*}
	Then $\tilde{M}^\varepsilon -\tilde{M}$	is a martingale so, by Doob's inequality, we have
	\begin{equation*}
		\Pro\left(\|\tilde{M}^\varepsilon -\tilde{M}\|_{C([0,T])}>\delta \right)\leqslant \tfrac{1}{\delta^2}\E\left[|\tilde{M}^\varepsilon (T)-\tilde{M}(T)|^2\right].
	\end{equation*}
	By It\^o's isometry, we obtain
	\begin{equation*}
	\Pro\left(\|\tilde{M}^\varepsilon -\tilde{M}\|_{C([0,T])}>\delta \right)\leqslant \tfrac{2}{\delta^2}\E \sum_{k \geqslant 1} \int_0^T \left| \int_\T \left( u^\varepsilon \left(  \sigma_k H \right)^\varepsilon- u  \sigma_k H\right) \ud x\right|^2  \ud t,
	\end{equation*}
	so, using dominated convergence theorem, we have
	\begin{equation*}
		\lim_{\varepsilon \to 0}	\Pro\left(\|\tilde{M}^\varepsilon -\tilde{M}\|_{C([0,T])}>\delta \right)=0,
	\end{equation*}
	\textit{i.e.} $\tilde{M}^\varepsilon \to \tilde{M}$ in $C([0,T])$ in probability. In particular, there is a subsequence of $(\tilde{M}^\varepsilon)$ which converges a.s. to $\tilde{M}$ in $C([0,T])$. This gives that $(X^\varepsilon)$ is relatively compact in $C([0,T])$. 
	
	\textbf{Step 3.} The aim of this step is to show that $u$ enjoys the regularity $L^2(\Omega; C([0,T]; L^2(\T)))$. 
We recall that, almost-surely, $u \in C([0,T];H^{-1}(\T))$ and $t\mapsto\|u(t)\|_{L^2}^2$ is continuous. This gives the continuity property $u\in C([0,T];L^2(\T))$, as seen by using the expansion
	\begin{equation*}
		\tfrac12\|u(t)-u(s)\|_{L^2}^2=\left(\tfrac12\|u(s)\|_{L^2}^2-\tfrac12 \|u(t)\|_{L^2}^2 \right)+\dual{u(t)}{u(t)-u(s)}.
	\end{equation*} 
Next, using that $u \in L^2(\Omega \times [0,T] \times \T)$ and the Burkholder--Davis--Gundy inequality we obtain that 
	\begin{equation*}
		\sup_\varepsilon \E\left[\sup_{t\in[0,T]} \left|\tilde{M}^\varepsilon(t)\right|\right]  <\infty.
	\end{equation*} 	
By Fatou's Lemma, \eqref{decomposeX}, \eqref{Agamma} with $s=0$, \eqref{gammabound} and the fact that $X^\varepsilon(0) \leqslant \|u_0\|_{L^2}^2$, we obtain
	\begin{equation*}
		\E\left[\sup_{t\in[0,T]}\|u(t)\|_{L^2}^2\right]\leqslant \liminf_{\varepsilon\to 0} \E\left[\sup_{t\in[0,T]} X^\varepsilon(t)\right]< \infty.
	\end{equation*}
\textbf{Step 4.} Once we have the regularity $u \in L^2(\Omega; C([0,T]; L^2(\T)))$, it remains to take the limit $\varepsilon \to 0$ in \eqref{ItoPsi} and \eqref{ItoPsi2} following \cite[Appendix]{DebusscheHofmanovaVovelle16}, the details are omitted here.
\end{proof}

We present here another It\^o formula where $u$ is differentiable with respect to $t$, whereas $u_t$ is non-differentiable (singular). The proof is simpler than the one of Proposition \ref{Proposition:Ito1} and is omitted here.  

\begin{proposition}\label{Proposition:Ito2} 
Assume 
\begin{gather} \nonumber
\int_\T (u_t(t)-u_t(0)) \varphi  \, \ud x + \int_\T (\Gamma(u(t))-\Gamma(u(0))) \varphi  \, \ud x = \int_0^t \langle F, \varphi \rangle_{H^{-1}, H^1} \, \ud s\\ 
\label{AppIto2}
+ \sum_{k \geqslant 1} \int_0^t \int_\T \sigma_k(x) H \varphi \, \ud x\,  \ud \beta_k(s),
\end{gather}
where 
\begin{equation*}
u \in L^2(\Omega; C([0,T]; L^2(\T))) \cap L^2(\Omega \times [0,T]; H^1(\T)), \qquad \Gamma \in Lip(\R;\R)
\end{equation*}
and $H$ is predictable,
\begin{equation*}
F \in L^2(\Omega \times [0,T]; H^{-1}(\T)), \qquad H \in L^\infty (\Omega \times [0,T] \times \T). 
\end{equation*}
Then, if $\varphi \in C^2((0,T) \times \T)$ and $\Psi \in C^2(\R)$ satisfies
\begin{equation*}
|\Psi(u)| + |\Psi'(u)| \leqslant C \quad \forall u \in \R,
\end{equation*}
we have 
\begin{align*}\nonumber
\int_\T \Psi(u(t)) u_t(t)\varphi(t) \, \ud x  
&= \int_\T \Psi(u(0)) u_t(0) \varphi(0,x)  \, \ud x + \int_0^t \int_\T \Psi(u) u_t \varphi_t\, \ud x\, \ud s + \int_0^t \int_\T \Psi'(u) u_t^2 \varphi\, \ud x\, \ud s \\ \nonumber
& \quad - \int_\T \left( \tilde{\Gamma}(u(t))\varphi(t) -\tilde{\Gamma}(u(0)) \varphi(0)  \right) \ud x+ \int_0^t \int_\T  \varphi_t \tilde{\Gamma}(u) \, \ud x\, \ud s\\
& \quad + \int_0^t \int_\T \Psi(u) F \varphi\, \ud x\, \ud s   + \sum_{k \geqslant 1} \int_0^t \int_\T \sigma_k(x) H \varphi \Psi(u)  \, \ud x\,  \ud \beta_k(s),
\end{align*}
where $\tilde{\Gamma}'(u) = \Psi(u) \Gamma'(u)$.
\end{proposition}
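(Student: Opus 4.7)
The hypothesis \eqref{AppIto2} recasts as saying that $v \eqdef u_t + \Gamma(u)$, viewed as an $H^{-1}(\T)$-valued process, is an It\^o semimartingale with dynamics
\[
\ud v = F\, \ud t + H\,\Phi\, \ud W.
\]
Since $u \in C([0,T]; L^2(\T))$ is absolutely continuous in $t$ (with weak time derivative $u_t = v - \Gamma(u)$, where the martingale part of $v$ is integrated in $t$ and contributes only bounded-variation-in-time behaviour to $u$), the process $\Psi(u)\varphi$ has finite variation in time and no martingale part; in particular no cross-variation arises between $\Psi(u)\varphi$ and $v$. The plan is therefore to apply the It\^o product rule to $\Psi(u) v \varphi$ and then to recover the desired identity for $\Psi(u) u_t \varphi = \Psi(u)(v - \Gamma(u))\varphi$ by handling the $\Gamma$ contribution through the chain rule and integration by parts in time.

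\textbf{Formal computation.} The product rule yields
\[
\ud\bigl[\Psi(u) v \varphi\bigr] = \varphi \Psi(u)\, \ud v + v \bigl(\Psi'(u) u_t \varphi + \Psi(u) \varphi_t\bigr)\, \ud t.
\]
Expanding $v = u_t + \Gamma(u)$ in the bracket produces four contributions; the delicate one is $\Gamma(u)\Psi'(u) u_t \varphi$. The key algebraic observation, via $(\Gamma\Psi)' = \Gamma'\Psi + \Gamma\Psi'$ together with the definition $\tilde{\Gamma}' = \Psi\Gamma'$, is that
\[
\Gamma(u)\Psi'(u) u_t = \tfrac{\ud}{\ud t}\bigl(\Gamma(u)\Psi(u)\bigr) - \tfrac{\ud}{\ud t}\tilde{\Gamma}(u).
\]
Integrating in time and applying integration by parts on each total derivative against $\varphi$, the boundary contributions $\int_\T \Gamma(u)\Psi(u)\varphi \big|_0^t$ cancel exactly with the analogous terms arising from the decomposition $\int_\T v\Psi(u)\varphi = \int_\T u_t\Psi(u)\varphi + \int_\T \Gamma(u)\Psi(u)\varphi$ at the two endpoints, and the $\int_0^t\int_\T \varphi_t \Gamma(u)\Psi(u)$ interior term cancels with its IBP counterpart. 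What survives is precisely the stated formula, with the $\pm\tilde{\Gamma}(u)$ boundary and interior contributions originating from the $-\tfrac{\ud}{\ud t}\tilde{\Gamma}(u)$ piece.

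\textbf{Rigorous justification and main obstacle.} To legitimize the above in the $H^{-1}$-valued setting, one mollifies in space, mimicking Step~1 of the proof of Proposition~\ref{Proposition:Ito1}. Let $j_\varepsilon$ be a Friedrichs mollifier on $\T$; testing \eqref{AppIto2} against $y \mapsto j_\varepsilon(y-x)$ for each fixed $x \in \T$ yields the real-valued It\^o SDE $\ud v^\varepsilon(t,x) = F^\varepsilon(t,x)\, \ud t + \sum_k (\sigma_k H)^\varepsilon(t,x)\, \ud\beta_k(t)$, while $u^\varepsilon(t,x)$ is pointwise absolutely continuous in $t$ with derivative $u_t^\varepsilon(t,x)$. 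The classical finite-dimensional It\^o product rule then applies to $\Psi(u^\varepsilon) v^\varepsilon \varphi$ at each $x$, the algebraic manipulations of the previous paragraph are carried out at the mollified level, and integration in $x$ gives the mollified identity. The main obstacle is the passage $\varepsilon \to 0$ in the nonlinear quadratic term $\int_0^t\int_\T \Psi'(u^\varepsilon)(u_t^\varepsilon)^2 \varphi$, which requires strong $L^2_{t,x}$ convergence of $u_t^\varepsilon \to u_t$; this is implicit in the hypothesis, since otherwise the right-hand side of the claimed identity is not even well-defined. All remaining terms converge by the Lipschitz continuity of the Nemytski operators associated to $\Psi,\Psi',\Gamma,\tilde{\Gamma}$ together with the regularity of $u$, while the stochastic integral passes to the limit via It\^o isometry and Doob's inequality, exactly as in Step~2 of the proof of Proposition~\ref{Proposition:Ito1}.
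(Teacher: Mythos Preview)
The paper does not actually prove this proposition; it states only that ``the proof is simpler than the one of Proposition~\ref{Proposition:Ito1} and is omitted here.'' Your approach---mollify in space, apply the finite-dimensional It\^o product rule to $\Psi(u^\varepsilon)\, v^\varepsilon\, \varphi$ pointwise in $x$ (with $v=u_t+\Gamma(u)$ the semimartingale and $\Psi(u^\varepsilon)\varphi$ of finite variation), use the algebraic identity $\Gamma\Psi' = (\Gamma\Psi)' - \tilde{\Gamma}'$ to handle the $\Gamma$-contribution, and pass to the limit---is correct and is precisely in the spirit of that hint, since the proof of Proposition~\ref{Proposition:Ito1} follows the same mollification-then-limit scheme.

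Two minor points. First, at the mollified level you actually have $v^\varepsilon = u_t^\varepsilon + (\Gamma(u))^\varepsilon$, not $u_t^\varepsilon + \Gamma(u^\varepsilon)$, so carrying out the chain-rule step there produces a commutator $\bigl[(\Gamma(u))^\varepsilon - \Gamma(u^\varepsilon)\bigr]\Psi'(u^\varepsilon)u_t^\varepsilon$; this vanishes in the limit since $\Gamma$ is Lipschitz and $u^\varepsilon\to u$ in $L^2_{t,x}$, but it should be acknowledged (alternatively, perform the algebra only after passing to the limit). Second, your observation that $u_t\in L^2_{t,x}$ is an implicit hypothesis---needed for the quadratic term $\int\Psi'(u)u_t^2\varphi$ even to make sense---is correct; in the paper's applications (the wave equation \eqref{SVWE1}) this regularity is always available from the energy estimates.
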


\paragraph{Acknowledgements.} This work was supported by the LABEX MILYON (ANR-10-LABX-0070) of Universit\'e de Lyon, within the program ``Investissements d'Avenir (ANR-11-IDEX-0007) operated by the French National Research Agency (ANR). It was also supported by the project ADA ``Averaging, Diffusion-Approximation in infinite dimension: theory and numerics'' (ANR-19-CE40-0019).

Both authors were supported by the Unit\'e de Math\'ematiques Pure et Appliqu\'ees, UMPA (CNRS and ENS de Lyon). The first author also acknowledges support from NYU Abu Dhabi.

\def\cprime{$'$}

\end{document}